\newtheorem{lemma}{Lemma}[section]
\newtheorem{proposition}[lemma]{Proposition}
\newtheorem{theorem}[lemma]{Theorem}
\newtheorem{algo}[lemma]{Framework}
\newtheorem{framework}[lemma]{Framework}
\providecommand{\N}{{\ensuremath{\mathbb{N}}}}
\providecommand{\R}{{\ensuremath{\mathbb{R}}}}
\providecommand{\B}{\mathcal{B}}
\renewcommand{\P}{\mathbbm{P}}
\providecommand{\bS}{\mathbb{S}}
\renewcommand{\S}{\mathcal{S}}
\providecommand{\E}{{\ensuremath{\mathbbm{E}}}}
\providecommand{\N}{{\ensuremath{\mathbbm{N}}}}
\providecommand{\bU}{{\ensuremath{\mathbb{U}}}}
\providecommand{\bX}{{\ensuremath{\mathbbm{X}}}}
\providecommand{\R}{{\ensuremath{\mathbbm{R}}}}
\providecommand{\E}{{\ensuremath{\mathbb{E}}}}
\providecommand{\cE}{{\ensuremath{\mathcal{E}}}}
\newcommand{\F}{{\ensuremath{\mathcal{F}}}}
\newcommand{\bF}{{\ensuremath{\mathbb{F}}}}
\newcommand{\X}{{\ensuremath{\mathcal{X}}}}
\title{\vspace{-0.5cm}Solving the Kolmogorov PDE \\ by means of deep learning}
\author{Christian Beck$^1$,
	Sebastian Becker$^2$,
	Philipp Grohs$^3$,\\
	Nor Jaafari$^4$,
	and 
	Arnulf Jentzen$^{5}$
	\bigskip
	\\
	\small{$^1$ Department of Mathematics, ETH Zurich,}\\
	\small{Zurich, Switzerland, e-mail: 
		christian.beck@math.ethz.ch}
	\smallskip
	\\
	\small{$^2$ ZENAI AG, Zurich, Switzerland, e-mail: sebastian.becker@zenai.ch}
	\smallskip
	\\
	\small{$^3$ Faculty of Mathematics and Research Platform Data Science,}\\ \small{University of Vienna, Vienna, Austria, e-mail: philipp.grohs@univie.ac.at}
	\smallskip
	\\
	\small{$^4$ ZENAI AG, Zurich, Switzerland, e-mail: nor.jaafari@zenai.ch}
	\smallskip
	\\
	\small{$^5$ Department of Mathematics, ETH Zurich, Zurich,}\\
	\small{Switzerland, e-mail: arnulf.jentzen@sam.math.ethz.ch}}
\begin{document}

\maketitle
\vspace{-0.5cm}
\begin{abstract}
	Stochastic differential equations (SDEs) and the Kolmogorov partial differential equations (PDEs) associated to them have been widely used in models from engineering, finance, and the natural sciences. 
	In particular, SDEs and Kolmogorov PDEs, respectively, are highly employed in models for the approximative pricing of financial derivatives. 
	Kolmogorov PDEs and SDEs, respectively, can typically not be solved explicitly and it has been and still is an active topic of research to design and analyze numerical methods which are able to approximately solve Kolmogorov PDEs and SDEs, respectively. Nearly all approximation methods for Kolmogorov PDEs in the literature suffer under the curse of dimensionality or only provide approximations of the solution of the PDE at a single fixed space-time point. In this paper we derive and propose a numerical approximation method which aims to overcome both of the above mentioned drawbacks and intends to deliver a numerical approximation of the Kolmogorov PDE on an entire region $[a,b]^d$ without suffering from the curse of dimensionality.
	Numerical  results  on  examples  including  the heat equation, the Black-Scholes model, the stochastic Lorenz equation, and the Heston model suggest that the proposed approximation algorithm is quite effective in high dimensions in terms of both accuracy and speed.
\end{abstract}

\tableofcontents

\section{Introduction}

Stochastic differential equations (SDEs) and the Kolmogorov partial differential equations (PDEs) associated to them have been widely used in models from engineering, finance, and the natural sciences. 
In particular, SDEs and Kolmogorov PDEs, respectively, are highly employed in models for the approximative pricing of financial derivatives. 
Kolmogorov PDEs and SDEs, respectively, can typically not be solved explicitly and it has been and still is an active topic of research to design and analyze numerical methods which are able to approximately solve Kolmogorov PDEs and SDEs, respectively (see, e.g.,
\cite{Giles_MultilevelMonteCarlo2008}, 
\cite{TalayGrahamBook}, 
\cite{Gyoengy_ANoteOnEulersApproximations1998}, 
\cite{HighamIntroduction}, 
\cite{Higham2011stochastic}, 
\cite{HighamMaoStuart_StrongConvergenceOfEuler2002}, 
\cite{HutzenthalerJentzen2015_Memoirs},
\cite{Kloeden_SystematicDerivation2002}, 
\cite{KloedenPlaten1992}, 
\cite{KloedenPlatenSchurz2012}, \cite{Maruyama_ContinuousMarkovProcessesAndStochasticEquations1955},
\cite{Milstein1974},
\cite{Milstein1995}, 
\cite{MilsteinTretyakovBook},
\cite{GronbachRitterMinimal2008}, 
\cite{Roessler2009}). 
In particular, there are nowadays several different types of numerical approximation methods for Kolmogorov PDEs in the literature including deterministic numerical approximation methods such as finite differences based approximation methods (cf., for example, \cite{BrennanSchwartz1977},  \cite{BrennanSchwartz1978}, \cite{HanWuFD03}, \cite{KUSHNERFD76}, \cite{YhaoDavisonCorlessFD07}, \cite{SCHWARTZ1977}) and finite elements based approximation methods (cf., for example, \cite{brenner2007mathematical}, \cite{ciarlet1991basic}, \cite{zienkiewicz1977finite}) as well as random numerical approximation methods based on Monte Carlo methods (cf., for example,~\cite{Giles_MultilevelMonteCarlo2008}, \cite{TalayGrahamBook}) and discretizations of the underlying SDEs (cf., for example,
\cite{Gyoengy_ANoteOnEulersApproximations1998}, 
\cite{HighamIntroduction}, 
\cite{Higham2011stochastic}, 
\cite{HighamMaoStuart_StrongConvergenceOfEuler2002}, 
\cite{HutzenthalerJentzen2015_Memoirs},
\cite{Kloeden_SystematicDerivation2002},  
\cite{KloedenPlaten1992}, 
\cite{KloedenPlatenSchurz2012}, \cite{Maruyama_ContinuousMarkovProcessesAndStochasticEquations1955},
\cite{Milstein1974},
\cite{Milstein1995},
\cite{MilsteinTretyakovBook},
\cite{GronbachRitterMinimal2008}, 
\cite{Roessler2009}). 
The above mentioned deterministic approximation methods for PDEs work quite efficiently in one or two space dimensions but cannot be used in the case of high-dimensional PDEs as they suffer from the so-called curse of dimensionality (cf.\  Bellman~\cite{Bellman}) in the sense that the computational effort of the considered approximation algorithm grows exponentially in the PDE dimension. The above mentioned random numerical approximation methods involving Monte Carlo approximations typically overcome this curse of dimensionality but only provide approximations of the Kolmogorov PDE at a single fixed space-time point. 

The key contribution of this paper is to derive and propose a numerical approximation method which aims to overcome both of the above mentioned drawbacks and intends to deliver a numerical approximation of the Kolmogorov PDE on an entire region $[a,b]^d$ without suffering from the curse of dimensionality. The numerical scheme, which we propose in this work, is inspired by recently developed deep learning based approximation algorithms for PDEs in the literature (cf., for example,  \cite{BeckEJentzen2017}, \cite{DeepStopping}, \cite{EHanJentzen2017a}, \cite{EYu2017}, \cite{FujiiTakahashiTakahashi2017}, \cite{EHanJentzen2017b}, \cite{Labordere2017}, \cite{raissi2018forward}, \cite{SirignanoSpiliopoulos2017}). To derive the proposed approximation scheme we first reformulate the considered Kolmogorov PDE as a suitable infinite dimensional stochastic optimization problem (see items~\eqref{it:exuniqueSolutions}--\eqref{it:UboundaryCond} in Proposition~\ref{proposition:minimizingPropertyApplied_new} below for details). This infinite dimensional stochastic optimization problem is then temporally discretized by means of suitable discretizations of the underlying SDE and it is spatially discretized by means of fully connected deep artificial neural network approximations (see~\eqref{eq:toMinimizeApprox} in Subsection~\ref{subsec:sgd} as well as Subsections~\ref{subsec:discretizationOfX}--\ref{subsec:DNNapproximations} below). The resulting finite dimensional stochastic optimization problem is then solved by means of stochastic gradient descent type optimization algorithms (see~\eqref{eq:plainGradientDescent} in Subsection~\ref{subsec:sgd}, Framework~\ref{algo:special} in Subsection~\ref{subsec:desc_algo}, Framework~\ref{algo:general_algorithm} in Subsection~\ref{subsec:generalOptDescription},  as well as \eqref{eq:Adam1}--\eqref{eq:Adam2} in Subsection~\ref{sec:adam}). We test the proposed approximation method numerically in the case of several examples of SDEs and PDEs, respectively (see Subsections~\ref{sec:paraboliceq}--\ref{sec:heston} below for details). The obtained numerical results indicate that the proposed approximation algorithm is quite effective in high dimensions in terms of both accuracy and speed.

The remainder of this article is organized as follows. In Section~\ref{sec:derivationalgo} we derive the proposed approximation algorithm (see Subsections~\ref{subsec:kolmogorovEq}--\ref{subsec:sgd} below) and we present a detailed description of the proposed approximation algorithm in a special case (see Subsection~\ref{subsec:desc_algo} below) as well as in the general case (see Subsection~\ref{subsec:generalOptDescription} below). In Section~\ref{sec:examples} we test the proposed algorithm numerically in the case of several examples of SDEs and PDEs, respectively. The employed source codes for the numerical simulations in Section~\ref{sec:examples} are postponed to Section~\ref{sec:source_code}.

\section{Derivation and description of the proposed approximation algorithm}
\label{sec:derivationalgo}


In this section we describe the approximation
problem which we intend to solve (see Subsection~\ref{subsec:kolmogorovEq} below)
and we derive (see Subsections~\ref{subsec:connection}--\ref{subsec:sgd} below)
and specify (see Subsections~\ref{subsec:desc_algo}--\ref{subsec:generalOptDescription} below)
the numerical scheme which we suggest to use to solve this approximation problem
(cf., for example, E et al.~\cite{EHanJentzen2017a},
Han et al.~\cite{EHanJentzen2017b}, 
Sirignano \& Spiliopoulos~\cite{SirignanoSpiliopoulos2017},
Beck et al.~\cite{BeckEJentzen2017},
Fujii, Takahashi, A., \& Takahashi, M.~\cite{FujiiTakahashiTakahashi2017},
and Henry-Labordere~\cite{Labordere2017} 
for related derivations and related approximation schemes).

\subsection{Kolmogorov partial differential equations (PDEs)}
\label{subsec:kolmogorovEq}
Let 
  $ T \in (0,\infty) $, 
  $ d \in \N $, 
let 
  $\mu\colon \R^d\to\R^d$ 
  and 
  $\sigma\colon \R^d\to\R^{d\times d}$ 
  be Lipschitz continuous functions,  
let 
  $
    \varphi\colon \R^d \to \R
  $
  be a function, 
  and 
let 
  $ u = (u(t,x))_{(t,x)\in [0,T]\times\R^d}
  \in C^{1,2}([0,T]\times\R^d,\R)$
  be a function with at most polynomially growing partial derivatives
  which satisfies for every 
  $t\in [0,T]$, $x\in\R^d$ that 
  $u(0,x) = \varphi(x)$ and 
  \begin{equation}\label{eq:kolmogorovPDE}
   \tfrac{\partial u}{\partial t}(t,x) 
   = 
   \tfrac12 \operatorname{Trace}_{\R^d}\!\big(
   \sigma(x)[\sigma(x)]^{*}(\operatorname{Hess}_x u)(t,x)\big)
   + 
   \langle 
    \mu(x),(\nabla_x u)(t,x)
   \rangle_{\R^d}. 
  \end{equation}
Our goal is to approximately calculate the function 
$\R^d\ni x\mapsto u(T,x)\in\R$ on some subset of $\R^d$. 
To fix ideas we consider real numbers $a,b\in\R$ with 
$a<b$ and we suppose that our goal is to approximately 
calculate the function $[a,b]^d\ni x\mapsto u(T,x)\in\R$.


\subsection{On stochastic differential equations and Kolmogorov PDEs}
\label{subsec:connection}
In this subsection we provide a probabilistic representation for the solutions 
of the PDE~\eqref{eq:kolmogorovPDE}, that is, we recall 
the classical Feynman-Kac formula for the PDE 
\eqref{eq:kolmogorovPDE} (cf., for example, {\O}ksendal~\cite[Chapter 8]{Oksendal_SDE2003}). 

Let 
  $(\Omega,\F,\P)$ be a probability space 
  with a normal filtration $(\bF_t)_{t\in [0,T]}$, 
let 
  $W\colon [0,T]\times\Omega \to \R^d$ 
  be a standard $(\Omega,\F,\P,(\bF_t)_{t\in [0,T]})$-Brownian motion, 
and for every $x\in\R^d$ let 
  $X^x=(X^x_t)_{t\in [0,T]}\colon [0,T]\times\Omega\to\R^d$ 
  be an $(\bF_t)_{t\in [0,T]}$-adapted stochastic process with 
  continuous sample paths which satisfies that for every 
  $t\in [0,T]$ it holds $\P$-a.s.~that
  \begin{equation}\label{eq:SDEforXstartingAtx}
   X^x_t = x + \int_0^t \mu(X^x_s)\,ds + \int_0^t \sigma(X^x_s)\,dW_s. 
  \end{equation}
  The Feynman-Kac formula 
  (cf., for example, Hairer et al.~\cite[Corollary 4.17 and Remark 4.1]
  {HairerHutzenthalerJentzen_LossOfRegularity2015}) and \eqref{eq:kolmogorovPDE}
  hence yield that for every $x\in\R^d$ it holds that 
  \begin{equation}\label{eq:feynman-kac}
   u(T,x) = \E\big[u(0,X^x_T)\big] = \E\big[\varphi(X^x_T)\big]. 
  \end{equation}

  
\subsection{Formulation as minimization problem} 

In the next step we exploit \eqref{eq:feynman-kac}
to formulate a minimization problem which is uniquely solved by 
the function $[a,b]^d\ni x\mapsto u(T,x)\in\R$ (cf.~\eqref{eq:kolmogorovPDE} above). 
For this we first recall the $L^2$-minimization property of the expectation 
of a real-valued random variable (see Lemma~\ref{lemma:minimizingProperty} below). 
Then we extend this minimization result to certain random fields 
(see Proposition~\ref{proposition:minimizingProperty} below). 
Thereafter, we apply Proposition~\ref{proposition:minimizingProperty}
to random fields in the context of the Feynman-Kac representation~\eqref{eq:feynman-kac} 
to obtain Proposition~\ref{proposition:minimizingPropertyApplied_new} below. 
Proposition~\ref{proposition:minimizingPropertyApplied_new} 
provides a minimization problem 
(see, for instance,~\eqref{eq:functionMinimizedByU} below) 
which has the function $[a,b]^d\ni x\mapsto u(T,x)\in\R$ 
as the unique global minimizer. 

Our proof of Proposition~\ref{proposition:minimizingPropertyApplied_new} is based on the elementary auxiliary results in Lemmas~\ref{lem:projection_new}--\ref{lemma:equivalence_of_expectation_representations}. For completeness we also present the proofs of Lemmas~\ref{lem:projection_new}--\ref{lemma:equivalence_of_expectation_representations} here. The statement and the proof of Lemma~\ref{lem:projection_new} are based on the proof of Da Prato \& Zabczyk \cite[Lemma 1.1]{DaPratoZabczyk2008}.

\begin{lemma}\label{lemma:minimizingProperty}
 Let $(\Omega,\F,\P)$ be a probability space 
 and let $X\colon \Omega\to\R$ be an $\F$/$\B(\R)$-measurable random 
 variable which satisfies $\E[|X|^2] < \infty$. Then 
 \begin{enumerate}[(i)]
  \item\label{it:minimizingProperty_1} 
  it holds for 
  every $y\in\R$ that 
 \begin{equation}\label{eq:lemma_orthogonality_property}
  \E\big[ | X - y |^2\big]
  = 
  \E\big[ | X - \E[X] |^2 \big] + |\E[X]-y|^2,  
 \end{equation}
  \item\label{it:minimizingProperty_2}
  it holds that there exists a unique real number $z\in\R$ 
  such that 
  \begin{equation}\label{eq:lemma_minimizer_existence}
   \E\big[ | X - z |^2 \big] = \inf_{y\in\R} \E\big[ | X - y |^2 \big], 
  \end{equation}
  and 
  \item\label{it:minimizingProperty_3}
  it holds that 
  \begin{equation}\label{eq:lemma_minimizer_uniqueness}
   \E\big[ | X - \E[X] |^2 \big] = \inf_{y\in\R} \E\big[ | X - y |^2 \big].  
  \end{equation}
 \end{enumerate}
\end{lemma}

\begin{proof}[Proof of Lemma~\ref{lemma:minimizingProperty}]
 Observe that the fact that $\E[|X|]<\infty$ ensures that
 for every $y\in\R$ it holds that
 \begin{equation}\label{eq:proof_orthogonality}
  \begin{split}
  \E\big[ | X - y |^2 \big] 
  & = 
  \E\big[ | X - \E[X] + \E[X] - y |^2 \big] 
  \\
  & = 
  \E\big[ | X - \E[X] |^2 + 2 (X-\E[X]) (\E[X] - y) + |\E[X] - y|^2 \big]
  \\
  & =
  \E\big[ | X -\E[X] |^2 \big] + 2(\E[X]-y) \E\big[ X - \E[X]\big] + |\E[X] - y|^2 \\
  & = 
  \E\big[ | X - \E[X] |^2 \big] + |\E[X] - y|^2.
  \end{split}
 \end{equation}
 This establishes item~\eqref{it:minimizingProperty_1}. 
 Item~\eqref{it:minimizingProperty_2} and item~\eqref{it:minimizingProperty_3} 
 are immediate consequences of item~\eqref{it:minimizingProperty_1}. 
 The proof of Lemma~\ref{lemma:minimizingProperty} 
 is thus completed.
\end{proof}

\begin{proposition}\label{proposition:minimizingProperty}
 Let 
  $a\in\R$, $b\in (a,\infty)$, 
 let 
  $(\Omega,\F,\P)$ be a probability space,  
 let 
  $ X = ( X_x )_{ x \in [a,b]^d } \colon [a,b]^d \times \Omega \to \R $ 
  be a 
  $ ( \mathcal{B}( [a,b]^d ) \otimes \mathcal{F} ) $/$ \mathcal{B}( \R ) $-measurable function, 
 assume 
  for every $x\in [a,b]^d$ that $\E[|X_x|^2]<\infty$, 
 and 
 assume 
  that the function $[a,b]^d\ni x\mapsto \E[X_x]\in\R$ is continuous. 
 Then 
 \begin{enumerate}[(i)]
  \item\label{it:minimizingProperty_prop_1} 
  it holds that there exists a unique continuous function 
  $u\colon [a,b]^d\to\R$ such that 
  \begin{equation}\label{eq:existenceAndUniquenessOfMinimizer}
   \int_{[a,b]^d} \E\big[ | X_x - u(x) |^2 \big] \,dx
   = 
   \inf_{v\in C([a,b]^d,\R)} \bigg(\int_{[a,b]^d} \E\big[ | X_x - v(x) |^2 \big] \,dx\bigg)
  \end{equation} 
  and
  \item\label{it:minimizingProperty_prop_2} 
  it holds for every $x\in [a,b]^d$ that $u(x)=\E[X_x]$.
 \end{enumerate}
\end{proposition}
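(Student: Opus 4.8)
The plan is to reduce the integrated minimization problem~\eqref{eq:existenceAndUniquenessOfMinimizer} to the pointwise $L^2$-minimization identity of Lemma~\ref{lemma:minimizingProperty} by applying that lemma for each fixed $x\in[a,b]^d$ and then integrating over $[a,b]^d$. First I would record the measurability and integrability bookkeeping: the joint $(\B([a,b]^d)\otimes\F)/\B(\R)$-measurability of $X$, together with the continuity (hence measurability) of $[a,b]^d\ni x\mapsto\E[X_x]\in\R$ and Tonelli's theorem, guarantees that for every $v\in C([a,b]^d,\R)$ the maps $x\mapsto\E[|X_x-\E[X_x]|^2]$ and $x\mapsto\E[|X_x-v(x)|^2]$ are measurable, so that all integrals appearing below are well defined as elements of $[0,\infty]$.

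The key step is an integrated Pythagorean decomposition. For every $x\in[a,b]^d$ and every $v\in C([a,b]^d,\R)$, item~\eqref{it:minimizingProperty_1} of Lemma~\ref{lemma:minimizingProperty}, applied to the random variable $X_x$ with $y=v(x)$, yields
\[
  \E\big[|X_x-v(x)|^2\big]
  =
  \E\big[|X_x-\E[X_x]|^2\big]
  +
  |\E[X_x]-v(x)|^2 .
\]
Integrating this nonnegative identity term by term over $[a,b]^d$ gives
\[
  \int_{[a,b]^d}\!\E\big[|X_x-v(x)|^2\big]\,dx
  =
  \int_{[a,b]^d}\!\E\big[|X_x-\E[X_x]|^2\big]\,dx
  +
  \int_{[a,b]^d}\!|\E[X_x]-v(x)|^2\,dx .
\]
As the first summand on the right is independent of $v$, minimizing the left-hand side over $v\in C([a,b]^d,\R)$ is equivalent to minimizing the nonnegative quantity $\int_{[a,b]^d}|\E[X_x]-v(x)|^2\,dx$.

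From here existence and item~\eqref{it:minimizingProperty_prop_2} are immediate: the function $u\colon[a,b]^d\to\R$ given by $u(x)=\E[X_x]$ is continuous by assumption and drives the last integral to $0$, so it attains the infimum in~\eqref{eq:existenceAndUniquenessOfMinimizer}. For uniqueness I would take any $\tilde u\in C([a,b]^d,\R)$ attaining the same infimum; subtracting the common value $\int_{[a,b]^d}\E[|X_x-\E[X_x]|^2]\,dx$ from the decomposition forces $\int_{[a,b]^d}|\E[X_x]-\tilde u(x)|^2\,dx=0$, and since the integrand is a nonnegative continuous function on the compact set $[a,b]^d$ it must vanish identically, whence $\tilde u=u$.

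The conceptual content is thus elementary; the only real work, and the step I expect to be the main obstacle, is the measure-theoretic justification — using joint measurability and Tonelli to make the integrals and the term-by-term integration legitimate, and ensuring that the common variance integral $\int_{[a,b]^d}\E[|X_x-\E[X_x]|^2]\,dx$ is finite so that it may be cancelled in the uniqueness argument (once this infimum is finite, continuity promotes the vanishing $L^2$-distance to pointwise equality and uniqueness follows at once).
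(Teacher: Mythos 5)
Your proposal is correct and takes essentially the same route as the paper's proof: apply item~(i) of Lemma~\ref{lemma:minimizingProperty} pointwise with $y=v(x)$, integrate the resulting Pythagorean identity over $[a,b]^d$ by Fubini's theorem, read off existence and item~(ii) from the continuity of $[a,b]^d\ni x\mapsto\E[X_x]\in\R$, and obtain uniqueness by forcing $\int_{[a,b]^d}|\E[X_x]-u(x)|^2\,dx=0$ for any continuous minimizer and invoking continuity of the integrand. The finiteness issue you flag at the end is genuine but is equally unaddressed in the paper, whose proof likewise cancels $\int_{[a,b]^d}\E\big[|X_x-\E[X_x]|^2\big]\,dx$ from both sides of the integrated decomposition without verifying that this quantity is finite (the hypotheses only give pointwise finiteness of $\E[|X_x|^2]$), so on this point your attempt matches---and is, if anything, more candid than---the paper's own argument.
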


\begin{proof}[Proof of Proposition~\ref{proposition:minimizingProperty}]
Observe that item~\eqref{it:minimizingProperty_1} 
in Lemma~\ref{lemma:minimizingProperty} 
and the hypothesis that
$ \forall \, x \in [a,b]^d \colon \E[ |X_x|^2 ] < \infty $
ensure that for every function $ u \colon [a,b]^d \to \R $ 
and every $x\in [a,b]^d$ it holds that
\begin{equation}
\label{eq:minimizingPropertyQuantified}
  \E\big[ | X_x - u(x) |^2\big]
  = 
  \E\big[ | X_x - \E[X_x] |^2 \big] + | \E[X_x]- u(x) |^2 .
\end{equation}
Fubini's theorem (see, e.g., Klenke~\cite[Theorem 14.16]{Klenke_2014}) 
hence proves that for every continuous function
$ u \colon [a,b]^d \to \R $ it holds that
\begin{equation}
\label{eq:minimizingPropertyQuantified_2}
  \int_{ [a,b]^d }
  \E\big[ | X_x - u(x) |^2\big] \, dx
  = 
  \int_{ [a,b]^d }
  \E\big[ | X_x - \E[X_x] |^2 \big] \, dx 
  + 
  \int_{ [a,b]^d }
  | \E[X_x]- u(x) |^2 \, dx .
\end{equation}
The hypothesis that the function 
$ [a,b]^d \ni x \mapsto \E[ X_x ] \in \R $ 
is continuous therefore demonstrates that
\begin{align}
\begin{split}
 &\int_{[a,b]^d} 
  \E\big[ | X_x - \E[X_x] |^2 \big] \, dx
\\&\geq
  \inf_{v\in C([a,b]^d,\R)} 
  \left(
    \int_{[a,b]^d}
    \E\big[ | X_x - v(x) |^2 \big] \, dx
  \right)
\\&=
  \inf_{v\in C([a,b]^d,\R)} 
  \left(
    \int_{ [a,b]^d }
    \E\big[ | X_x - \E[X_x] |^2 \big] \, dx 
    + 
    \int_{ [a,b]^d }
    | \E[X_x]- v(x) |^2 \, dx
  \right)
\\&\geq 
  \inf_{v\in C([a,b]^d,\R)} 
  \left(
    \int_{ [a,b]^d }
    \E\big[ | X_x - \E[X_x] |^2 \big] \, dx 
  \right)
\\&=
  \int_{ [a,b]^d }
  \E\big[ | X_x - \E[X_x] |^2 \big] \, dx .
\end{split} 
\end{align}
Hence, we obtain that
\begin{equation}
\label{eq:minimizingPropertyQuantified_3}
  \int_{[a,b]^d} 
  \E\big[ | X_x - \E[X_x] |^2 \big] \, dx
  =
  \inf_{v\in C([a,b]^d,\R)} 
  \left(
    \int_{[a,b]^d}
    \E\big[ | X_x - v(x) |^2 \big] \, dx
  \right) .
\end{equation}
Again the fact that the function 
$[a,b]^d\ni x\mapsto \E[X_x]\in\R$ is continuous therefore
proves that there exists a continuous function $u\colon [a,b]^d\to\R$ 
such that
\begin{equation}
\label{eq:minimizingPropertyQuantified_4}
  \int_{[a,b]^d} \E\big[ | X_x - u(x) |^2 \big] \,dx
  = 
  \inf_{v\in C([a,b]^d,\R)} 
  \left(
    \int_{[a,b]^d} \E\big[ | X_x - v(x) |^2 \big] \, dx
  \right) .
\end{equation}
Next observe that \eqref{eq:minimizingPropertyQuantified_2}
and~\eqref{eq:minimizingPropertyQuantified_3}
yield that for every continuous function $ u \colon [a,b]^d \to \R $ 
with
\begin{equation}
  \int_{[a,b]^d} 
  \E\big[ | X_x - u(x) |^2 \big] \, dx
  = 
  \inf_{v\in C([a,b]^d,\R)} 
  \left(
    \int_{[a,b]^d}\E\big[ | X_x - v(x) |^2 \big] \, dx
  \right) 
\end{equation}
it holds that 
\begin{align}
\begin{split}
 &\int_{[a,b]^d} 
  \E\big[ | X_x - \E[X_x] |^2 \big] \, dx
\\&=
  \inf_{v\in C([a,b]^d,\R)} 
  \left(
    \int_{[a,b]^d}
    \E\big[ | X_x - v(x) |^2 \big] \, dx
  \right)
  =
  \int_{ [a,b]^d] }
  \E\big[ | X_x - u(x) |^2\big] \, dx
\\&= 
  \int_{ [a,b]^d }
  \E\big[ | X_x - \E[X_x] |^2 \big] \, dx 
  + 
  \int_{ [a,b]^d }
  | \E[X_x]- u(x) |^2 \, dx .
\end{split}
\end{align}
Hence, we obtain
that for every continuous function $ u \colon [a,b]^d \to \R $ 
with
\begin{equation}
  \int_{[a,b]^d} 
  \E\big[ | X_x - u(x) |^2 \big] \, dx
  = 
  \inf_{v\in C([a,b]^d,\R)} 
  \left(
    \int_{[a,b]^d}\E\big[ | X_x - v(x) |^2 \big] \, dx
  \right) 
\end{equation}
it holds that 
\begin{equation}
  \int_{[a,b]^d} | \E[X_x] - u(x) |^2 \, dx = 0 .
\end{equation}
This and again the hypothesis that the function
$ [a,b]^d \ni x \mapsto \E[ X_x ] \in \R $ 
is continuous yield that for 
every continuous function $ u \colon [a,b]^d \to \R $ 
with
\begin{equation}
  \int_{[a,b]^d} 
  \E\big[ | X_x - u(x) |^2 \big] \, dx
  = 
  \inf_{v\in C([a,b]^d,\R)} 
  \left(
    \int_{[a,b]^d}\E\big[ | X_x - v(x) |^2 \big] \, dx
  \right) 
\end{equation}
and every $ x \in [a,b]^d $
it holds that 
$ u(x) = \E[ X_x ] $. 
Combining this with~\eqref{eq:minimizingPropertyQuantified_4}
completes the proof of Proposition~\ref{proposition:minimizingProperty}.  
\end{proof}

\begin{lemma}[Projections in metric spaces]
	\label{lem:projection_new}
	Let $(E,d)$ be a metric space, 
	let $n \in \N$, $e_1, e_2, \ldots, e_n \in E$,
	and let $P \colon E \rightarrow E $
	be the function which satisfies for every $ x \in E $ that
	\begin{equation}
	\label{eq:projection_1_new} 
	P(x)
	=
	e_{
		\min\{ 
		k \in \{1,2,\ldots,n\} 
		\colon 
		d(x,e_k) 
		= 
		\min\{ 
		d(x,e_1), 
		d(x,e_2),
		\ldots,
		d(x,e_n)
		\}
		\}
	} .  
	\end{equation}
	Then
	\begin{enumerate}[(i)]
		\item\label{it:projection_2_new} 
		it holds for every $ x \in E $ that
		\begin{equation}
		d(x, P(x))
		=
		\min_{k\in\{1,2,\ldots, n\}} d(x,e_k)
		\end{equation}
		and
		\item\label{it:projection_1_new} 
		it holds for every $A\subseteq E$ that 
		$P^{-1}(A)\in \B(E)$.
	\end{enumerate}
\end{lemma}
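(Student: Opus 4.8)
The plan is to obtain item~\eqref{it:projection_2_new} immediately from the defining formula~\eqref{eq:projection_1_new} for $P$ and then to establish item~\eqref{it:projection_1_new} by exploiting the fact that $P$ takes only finitely many values. First I would treat item~\eqref{it:projection_2_new}. For every $x\in E$ write $\kappa(x)=\min\{k\in\{1,2,\ldots,n\}\colon d(x,e_k)=\min\{d(x,e_1),d(x,e_2),\ldots,d(x,e_n)\}\}$ for the index selected in~\eqref{eq:projection_1_new}. By the very definition of $\kappa(x)$ it holds that $d(x,e_{\kappa(x)})=\min_{k\in\{1,2,\ldots,n\}}d(x,e_k)$, and since~\eqref{eq:projection_1_new} gives $P(x)=e_{\kappa(x)}$, this yields $d(x,P(x))=\min_{k\in\{1,2,\ldots,n\}}d(x,e_k)$, which is the claim in item~\eqref{it:projection_2_new}. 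No estimation is needed here beyond unwinding the definition.

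For item~\eqref{it:projection_1_new} I would first record that for every $j\in\{1,2,\ldots,n\}$ the map $E\ni x\mapsto d(x,e_j)\in\R$ is continuous---an immediate consequence of the triangle inequality---and hence $\B(E)$/$\B(\R)$-measurable. Next, for each $k\in\{1,2,\ldots,n\}$ I would introduce the set
\begin{equation*}
B_k=\Bigg(\bigcap_{j=1}^{n}\{x\in E\colon d(x,e_k)\le d(x,e_j)\}\Bigg)\cap\Bigg(\bigcap_{i=1}^{k-1}\{x\in E\colon d(x,e_i)> d(x,e_k)\}\Bigg).
\end{equation*}
The first intersection encodes that $e_k$ attains the minimal distance to $x$, while the second encodes the tie-breaking rule that no strictly smaller index attains this minimum; together they characterize exactly the event $\{x\in E\colon \kappa(x)=k\}$. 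Each of the sets $\{x\in E\colon d(x,e_k)\le d(x,e_j)\}$ and $\{x\in E\colon d(x,e_i)> d(x,e_k)\}$ is the preimage of a closed, respectively open, half-line under a difference of the continuous functions above, hence lies in $\B(E)$; as $B_k$ is a finite intersection of such sets, it follows that $B_k\in\B(E)$ for every $k\in\{1,2,\ldots,n\}$.

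Finally, since $P(x)=e_{\kappa(x)}$ and the sets $B_1,B_2,\ldots,B_n$ form the level sets of the index map $\kappa$, I would conclude for an arbitrary $A\subseteq E$ that
\begin{equation*}
P^{-1}(A)=\{x\in E\colon e_{\kappa(x)}\in A\}=\bigcup_{\substack{k\in\{1,2,\ldots,n\}\\ e_k\in A}}B_k,
\end{equation*}
which is a finite union of elements of $\B(E)$ and therefore itself an element of $\B(E)$. The only point demanding genuine care---rather than a real obstacle---is the bookkeeping in item~\eqref{it:projection_1_new}: one must faithfully translate the ``smallest minimizing index'' rule into the strict inequalities $d(x,e_i)>d(x,e_k)$ for $i<k$ and check that the $B_k$ are indeed the level sets of $\kappa$, so that the displayed union identity is valid for every subset $A\subseteq E$. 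That this holds for all $A$, and not merely for Borel $A$, rests entirely on $P$ having finite range.
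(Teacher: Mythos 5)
Your proof is correct. Both items are established soundly: item (i) by unwinding the definition, and item (ii) via continuity of the distance functions, Borel level sets of the minimizing-index map $\kappa$, and the finite range of $P$ to handle arbitrary $A\subseteq E$ --- which is the same basic strategy as the paper. The genuine difference lies in the decomposition used for item (ii). The paper writes $P^{-1}(A)=\cup_{f\in A\cap\{e_1,\ldots,e_n\}}P^{-1}(\{f\})$ and must then show that each fiber $P^{-1}(\{f\})$ is Borel; because the points $e_1,\ldots,e_n$ need not be distinct, the fiber $P^{-1}(\{e_k\})$ can be strictly larger than the level set $\{x\in E\colon \kappa(x)=k\}$, and the paper is forced into a careful first-occurrence analysis (restricting to indices $k$ with $e_k\notin\cup_{l\in\N\cap[0,k)}\{e_l\}$) before it can identify fibers with level sets. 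You instead union the index-level sets $B_k=\{x\in E\colon\kappa(x)=k\}$ over all indices $k$ with $e_k\in A$; since the $B_k$ partition $E$ regardless of repetitions among the $e_k$, your identity $P^{-1}(A)=\cup_{\{k\colon e_k\in A\}}B_k$ holds verbatim even with duplicates, and the repetition issue never arises. Your decomposition thus buys a shorter argument with no case distinction, at the cost of not exhibiting the individual fibers $P^{-1}(\{f\})$ as Borel sets --- information the paper's route produces along the way but which the lemma itself does not require.
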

\begin{proof}[Proof of Lemma~\ref{lem:projection_new}]
	Throughout this proof let 
	$ D = (D_1, \ldots, D_n) \colon E \rightarrow \R^n $
	be the function which satisfies for every $ x \in E $
	that
	\begin{equation}\label{eq:definition_of_D}
	D(x)
	=
	\left(
	D_1(x),
	D_2(x),
	\ldots,
	D_n(x)
	\right)
	=
	\left(
	d(x, e_1),
	d(x, e_2),
	\ldots,
	d(x, e_n)
	\right) .
	\end{equation}
	Note that~\eqref{eq:projection_1_new} 
	ensures that for every $x\in E$ it holds that 
	\begin{equation}
	d(x,P(x)) 
	= 
	d(x,e_{
		\min\{ 
		k \in \{1,2,\ldots,n\} 
		\colon 
		d(x,e_k) 
		= 
		\min\{ 
		d(x,e_1), 
		d(x,e_2),
		\ldots,
		d(x,e_n)
		\}
		\}
	})
	= 
	\min_{k\in\{1,2,\ldots,n\}} d(x,e_k) . 
	\end{equation}
	This establishes item~\eqref{it:projection_2_new}. 
	It thus remains to prove item~\eqref{it:projection_1_new}. For this observe that the fact that the function 
	$d\colon E\times E\to [0,\infty)$ is continuous ensures 
	that the function $D\colon E\to\R^n$ is continuous. Hence, 
	we obtain that the function $D\colon E\to \R^n$ is 
	$ \mathcal{B}(E)$/$\mathcal{B}(\R^n)$-measurable. 
	Next note that item~\eqref{it:projection_2_new} 
	demonstrates that for every $k\in \{1,2,\ldots,n\}$, 
	$x\in P^{-1}(\{e_k\})$ it holds that 
	\begin{equation}
	d(x,e_k) 
	= 
	d(x,P(x)) 
	= 
	\min_{l\in\{1,2,\ldots,n\}} d(x,e_l). 
	\end{equation}
	Hence, we obtain that for every $k\in\{1,2,\ldots,n\}$, 
	$x\in P^{-1}(\{e_k\})$ it holds that 
	\begin{equation}\label{eq:first_ineq_for_k_and_min}
	k\geq \min\{l\in\{1,2,\ldots,n\}\colon d(x,e_l) 
	= \min\{d(x,e_1),d(x,e_2),\ldots,d(x,e_n)\}\} . 
	\end{equation}
	Moreover, note that~\eqref{eq:projection_1_new} 
	ensures that for every 
	$ k \in \{1,2,\ldots,n\}$, $x\in P^{-1}(\{e_k\})$ 
	it holds that 
	\begin{align}
	\begin{split}
	&\min\!\left\{l\in\{1,2,\ldots,n\}\colon d(x,e_l) 
	= \min_{u\in\{1,2,\ldots,n\}} d(x,e_u)\right\} \\
	&\in 
	\big\{ l \in \{1,2,\ldots,n\} \colon e_l = e_k\big\}
	\subseteq\big\{k, k+1, \ldots, n\big\}. 
	\end{split}
	\end{align}
	Therefore, we obtain that for every $k\in\{1,2,\ldots,n\}$, $x\in P^{-1}(\{e_k\})$  
	with $e_k\notin (\cup_{l\in \N\cap [0,k)} \{e_l\})$ it holds that 
	\begin{equation}
	\min\!\left\{l\in\{1,2,\ldots,n\}\colon d(x,e_l) 
	= \min_{u\in\{1,2,\ldots,n\}} d(x,e_u)\right\} 
	\geq k. 
	\end{equation}
	Combining this with~\eqref{eq:first_ineq_for_k_and_min} yields 
	that for every $k\in\{1,2,\ldots,n\}$, $x\in P^{-1}(\{e_k\})$ with 
	$e_k\notin  (\cup_{l\in \N\cap [0,k)} \{e_l\})$  
	it holds that 
	\begin{equation}
	\min\!\left\{l\in\{1,2,\ldots,n\}\colon d(x,e_l) 
	= \min_{u\in\{1,2,\ldots,n\}} d(x,e_u)\right\} 
	=
	k . 
	\end{equation}
	Hence, we obtain that for every $k\in\{1,2,\ldots,n\}$ 
	with 
	$e_k\notin  (\cup_{l\in \N\cap [0,k)} \{e_l\})$ it holds 
	that 
	\begin{equation}
	P^{-1}(\{e_k\}) \subseteq
	\left\{x\in E\colon 
	\min\!\left\{l\in\{1,2,\ldots,n\}\colon d(x,e_l) 
	= \min_{u\in\{1,2,\ldots,n\}} d(x,e_u)\right\} 
	=
	k\right\} . 
	\end{equation}
	This and \eqref{eq:projection_1_new} show that 
	for every $k\in\{1,2,\ldots,n\}$ with 
	$e_k\notin  (\cup_{l\in \N\cap [0,k)} \{e_l\})$ it holds 
	that
	\begin{equation}
	P^{-1}(\{e_k\}) 
	=
	\left\{x\in E\colon 
	\min\!\left\{l\in\{1,2,\ldots,n\}\colon d(x,e_l) 
	= \min_{u\in\{1,2,\ldots,n\}} d(x,e_u)\right\} 
	=
	k\right\} .
	\end{equation}
	Combining \eqref{eq:definition_of_D} with 
	the fact that the function $D\colon E\to\R^n$ 
	is $\B(E)$/$\B(\R^n)$-measurable therefore demonstrates that 
	for every $ k \in \{1,2,\ldots,n\}$ with 
	$e_k \notin (\cup_{l \in \N \cap [0,k)} \{e_l\}) $
	it holds that
	\begin{align}
	\begin{split}
	&
	P^{-1}(\{e_k\})
	\\&=
	\left\{ 
	x \in E 
	\colon
	\min\!\left\{ 
	l \in \{1, 2, \ldots, n \}
	\colon
	d(x, e_l)
	=
	\min_{u\in\{1,2,\ldots,n\}} d( x, e_u )
	\right\} 
	= k
	\right\}
	\\&=
	\left\{ 
	x \in E 
	\colon
	\min\!\left\{ 
	l \in \{1, 2, \ldots, n \}
	\colon
	D_l(x)
	=
	\min_{ u \in \{1,2,\ldots,n\} }
	D_u(x)
	\right\}
	= 
	k
	\right\} 
	\\&=
	\left\{ 
	x \in E 
	\colon
	\left( 
	\begin{array}{c}
	\forall \, l \in\N\cap[0,k) 
	\colon
	D_k(x) < D_l(x) \,\, \text{and} \\
	\forall \, l \in \{1,2,\ldots,n\}
	\colon
	D_k(x) \leq D_l(x)
	\end{array}
	\right)
	\right\} 
	\\&=
	\left[ 
	\bigcap_{l=1}^{k-1}
	\underbrace{
		\{ 
		x \in E 
		\colon 
		D_k(x) < D_l(x) 
		\}
	}_{ \in \mathcal{B}(E) }
	\right]
	\bigcap 
	\left[ 
	\bigcap_{l=1}^{n}
	\underbrace{
		\{ 
		x \in E 
		\colon 
		D_k(x) \leq D_l(x) 
		\}
	}_{ \in \mathcal{B}(E) }
	\right] 
	\in 
	\mathcal{B}(E) .
	\end{split}
	\end{align}
	Hence, we obtain that for every
	$ f \in \{e_1,e_2,\ldots,e_n\} $
	it holds that
	\begin{equation}
	P^{-1}(\{f\})
	\in
	\mathcal{B}(E) .
	\end{equation}
	Therefore, we obtain that 
	for every $ A \subseteq E $ it holds that
	\begin{align}
	\begin{split}
	P^{-1}(A)
	&=
	P^{-1}\!\left(
	A \cap \{e_1,e_2,\ldots,e_n\}
	\right)
	\\&=
	\cup_{f \in A \cap \{e_1,e_2,\ldots,e_n\}}
	\underbrace{ 
		P^{-1}(\{f\})
	}_{ \in \mathcal{B}(E) }
	\in \mathcal{B}(E) .
	\end{split}
	\end{align}
	This establishes item~\eqref{it:projection_1_new}.
	The proof of Lemma~\ref{lem:projection_new} is thus completed.
\end{proof}

\begin{lemma}
	\label{lem:productMeasurable_new}
	Let $ (E, d) $ be a separable metric space, 
	let $ (\cE, \delta) $ be a metric space,
	let $ (\Omega, \F) $ be a measurable space,
	let $ X \colon E \times \Omega \rightarrow \cE $
	be a function, 
	assume for every
	$ e \in E $ that the function 
	$ \Omega \ni \omega \mapsto X(e, \omega) \in \cE $
	is $\F$/$\mathcal{B}(\cE)$-measurable, 
	and assume for every $ \omega \in \Omega $
	that the function $ E \ni e \mapsto X(e, \omega) \in \cE $
	is continuous. Then it holds that the function
	$ X \colon E\times \Omega \to \cE $ is $(\mathcal{B}(E) \otimes \F)$/$\mathcal{B}(\cE) $-measurable.
\end{lemma}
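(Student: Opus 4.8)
The plan is to realize $X$ as a pointwise limit of jointly measurable functions obtained by discretizing the continuous variable via nearest-point projections onto larger and larger finite subsets of a countable dense set. First I would use the separability of $(E,d)$ to fix a sequence $(e_k)_{k\in\N}\subseteq E$ that is dense in $E$. For every $n\in\N$ let $P_n\colon E\to E$ denote the nearest-point projection onto $\{e_1,e_2,\ldots,e_n\}$ in the sense of \eqref{eq:projection_1_new} in Lemma~\ref{lem:projection_new} (with the index-based tie-breaking built into that definition), and define $X_n\colon E\times\Omega\to\cE$ by setting $X_n(e,\omega)=X(P_n(e),\omega)$ for all $(e,\omega)\in E\times\Omega$.

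The second step is to establish joint measurability of each $X_n$. Since $P_n$ takes values only in the finite set $\{e_1,\ldots,e_n\}$, the preimages $P_n^{-1}(\{e_1\}),\ldots,P_n^{-1}(\{e_n\})$ cover $E$, and for every $B\in\B(\cE)$ one has
\begin{equation}
X_n^{-1}(B)=\bigcup_{k=1}^{n}\Big(P_n^{-1}(\{e_k\})\times\{\omega\in\Omega\colon X(e_k,\omega)\in B\}\Big).
\end{equation}
By item~\eqref{it:projection_1_new} of Lemma~\ref{lem:projection_new} each factor $P_n^{-1}(\{e_k\})$ lies in $\B(E)$, and by the assumed $\F$/$\B(\cE)$-measurability of $\omega\mapsto X(e_k,\omega)$ each factor $\{\omega\in\Omega\colon X(e_k,\omega)\in B\}$ lies in $\F$. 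Hence $X_n^{-1}(B)$ is a finite union of measurable rectangles and thus belongs to $\B(E)\otimes\F$, so each $X_n$ is $(\B(E)\otimes\F)$/$\B(\cE)$-measurable.

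The third step is the passage to the limit. Density of $(e_k)_{k\in\N}$ together with item~\eqref{it:projection_2_new} of Lemma~\ref{lem:projection_new} gives, for every $e\in E$, that $d(e,P_n(e))=\min_{k\in\{1,2,\ldots,n\}}d(e,e_k)$; given $\varepsilon>0$ one may pick $m\in\N$ with $d(e,e_m)<\varepsilon$, whence $d(e,P_n(e))<\varepsilon$ for all $n\geq m$, so $P_n(e)\to e$ in $(E,d)$. The assumed continuity of $e\mapsto X(e,\omega)$ then yields $X_n(e,\omega)=X(P_n(e),\omega)\to X(e,\omega)$ in $(\cE,\delta)$ for every $(e,\omega)\in E\times\Omega$. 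Invoking the standard fact that a pointwise limit of measurable maps into a metric space is again measurable (which holds because, for closed $C\subseteq\cE$, the preimage $X^{-1}(C)$ can be recovered from the $X_n^{-1}$ of the $\tfrac1m$-neighborhoods of $C$), I conclude that $X$ is $(\B(E)\otimes\F)$/$\B(\cE)$-measurable.

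I expect the genuinely delicate points to be already absorbed into the hypotheses and into Lemma~\ref{lem:projection_new}: the measurability of the finite-range nearest-point projection together with its tie-breaking is precisely what makes the representation of $X_n^{-1}(B)$ as a finite union of rectangles legitimate, and the separability of $E$ is exactly what guarantees that the projections $P_n$ approximate the identity pointwise. The only remaining conceptual ingredient is the preservation of measurability under pointwise limits for $\cE$-valued maps, and verifying the convergence $X_n\to X$ from $d(e,P_n(e))\to 0$ via continuity is the main, though elementary, step.
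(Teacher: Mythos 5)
Your proposal is correct and follows essentially the same route as the paper's proof: fix a dense sequence, discretize via the nearest-point projections of Lemma~\ref{lem:projection_new}, establish joint measurability of each discretized map $X_n$ through a finite union of measurable rectangles (using item~\eqref{it:projection_1_new} of that lemma together with the sectional measurability hypothesis), and then pass to the pointwise limit using item~\eqref{it:projection_2_new}, density, and the continuity of $e\mapsto X(e,\omega)$. The only cosmetic difference is that you index the rectangle decomposition by $k\in\{1,\ldots,n\}$ while the paper indexes it by the points of $\operatorname{Im}(P_n)$, which is equivalent.
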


\begin{proof}[Proof of Lemma~\ref{lem:productMeasurable_new}]
	Throughout this proof let
	$ (e_m)_{m \in \N} \subseteq E $ be a sequence which satisfies that 
	$\overline{\{ e_m \colon m \in \N \}} = E $,  
	let 
	$ P_n \colon E \rightarrow E $, $ n \in \N $,
	be the functions which satisfy for every $ n \in \N $, $ x \in E $ that
	\begin{align}
	P_n(x)
	=
	e_{
		\min\{ 
		k \in \{1,2,\ldots,n\} 
		\colon 
		d(x,e_k) 
		= 
		\min\{
		d(x,e_1), d(x,e_2), \ldots, d(x,e_n)
		\} 
		\}
	} ,
	\end{align}
	and let $\mathcal{X}_n \colon E \times \Omega \rightarrow \cE $, $ n \in \N $,
	be the functions which satisfy for every $ n \in \N $, $ x \in E $, $ \omega \in \Omega $ that
	\begin{equation}
	\label{eq:curlyXn}
	\mathcal{X}_n(x, \omega)
	=
	X( P_n(x), \omega) .
	\end{equation}Note that 
	\eqref{eq:curlyXn} shows that for all $ n \in \N $, $ B \in \mathcal{B}(\cE) $
	it holds that
	\begin{align}
	\begin{split}
	(\mathcal{X}_n)^{-1}(B)
	&=
	\left\{ 
	(x, \omega)
	\in E \times \Omega
	\colon
	\mathcal{X}_n(x, \omega) \in B
	\right\}
	\\&=
	\bigcup_{ y \in \operatorname{Im}(P_n) }
	\Big(
	\left[ 
	(\mathcal{X}_n)^{-1}(B)
	\right]
	\cap 
	\left[ 
	(P_n)^{-1}(\{y\}) \times \Omega
	\right] 
	\Big)
	\\&=
	\bigcup_{ y \in \operatorname{Im}(P_n) }
	\left\{ 
	(x, \omega)
	\in E \times \Omega
	\colon 
	\Big[
	\mathcal{X}_n(x, \omega) \in B
	\,\, \text{and} \,\,
	x \in (P_n)^{-1}(\{y\})
	\Big]
	\right\}
	\\&=
	\bigcup_{ y \in \operatorname{Im}(P_n) }
	\left\{ 
	(x, \omega)
	\in E \times \Omega
	\colon
	\Big[
	X(P_n(x), \omega) \in B
	\,\, \text{and} \,\,
	x \in (P_n)^{-1}(\{y\})
	\Big]
	\right\} 
	.
	\end{split}
	\end{align}
	Item~\eqref{it:projection_1_new} in Lemma~\ref{lem:projection_new}
	hence implies that for all $ n \in \N $, $ B \in \mathcal{B}(\cE) $
	it holds that
	\begin{align}
	\begin{split}
	(\mathcal{X}_n)^{-1}(B)
	&=
	\bigcup_{ y \in \operatorname{Im}(P_n) }
	\left\{ 
	(x, \omega)
	\in E \times \Omega
	\colon
	\Big[
	X(y, \omega) \in B
	\,\, \text{and} \,\,
	x \in (P_n)^{-1}(\{y\})
	\Big]
	\right\} 
	\\&=
	\bigcup_{ y \in \operatorname{Im}(P_n) }
	\Big(
	\left\{ 
	(x, \omega)
	\in E \times \Omega
	\colon
	X(y, \omega) \in B
	\right\}
	\cap 
	\left[ 
	(P_n)^{-1}(\{y\}) \times \Omega
	\right]
	\Big)
	\\&=
	\bigcup_{ y \in \operatorname{Im}(P_n) }
	\Big(
	\big[ 
	\underbrace{
		E
		\times \left(
		\left(X(y, \cdot)\right)^{-1}(B)\right)
	}_{\in (\mathcal{B}(E) \otimes \F)}
	\big] 
	\cap 
	\big[ 
	\underbrace{
		(P_n)^{-1}(\{y\})
		\times 
		\Omega
	}_{\in (\mathcal{B}(E) \otimes \F)}
	\big]
	\Big)
	\in (\mathcal{B}(E) \otimes \F) .
	\end{split}
	\end{align}
	This proves that for every $ n \in \N $ it holds that
	the function 
	$
	\mathcal{X}_n
	$
	is
	$
	(\mathcal{B}(E) \otimes \F)
	$/$
	\mathcal{B}(\cE)
	$-measurable.
	In addition, note that 
	item~\eqref{it:projection_2_new} in Lemma~\ref{lem:projection_new}
	and the hypothesis that for every $\omega\in\Omega$ it holds that the 
	function $E\ni x\mapsto X(x,\omega)\in\cE$ is continuous 
	imply that for every $ x \in E $, $ \omega \in \Omega $
	it holds that
	\begin{equation}
	\label{eq:productMeasurable_5_new}
	\lim_{ n \rightarrow \infty }
	\mathcal{X}_n(x, \omega)
	=
	\lim_{ n \rightarrow \infty }
	X( P_n(x), \omega)
	=
	X(x, \omega) .
	\end{equation}
	Combining this with the fact that for every $n\in\N$ it holds that 
	the function 
	$\X_n\colon E\times\Omega\to\cE$ is 
	$(\B(E)\otimes\F)$/$\B(\cE)$-measurable 
	shows that the function $X\colon E\times\Omega\to \cE$ 
	is $(\B(E)\otimes\F)$/$\B(\cE)$-measurable. 
	The proof of Lemma~\ref{lem:productMeasurable_new} is thus 
	completed.
\end{proof}

\begin{lemma}
	\label{lemma:stochastic_convergence_under_continuous_transformations}
	Let $ (\Omega, \F, \P) $ be a probability space, 
	let $ ( E, d ) $ and $ ( \cE, \delta ) $ 
	be separable metric spaces, 
	let $ X_n \colon \Omega \to E $, $ n \in \N_0 $, 
	be random variables which satisfy 
	for every $\varepsilon \in (0,\infty)$ that 
	\begin{equation}
	\label{eq:weakConvMetric1}
	\limsup_{n\to\infty} 
	\P(d(X_n,X_0)\geq \varepsilon) 
	= 0, 
	\end{equation}
	and let $\Phi\colon E \to \cE$ be a continuous function. 
	Then it holds for every $\varepsilon \in (0,\infty)$ that 
	\begin{equation}
	\label{eq:weakConvMetric2}
	\limsup_{n\to\infty} 
	\P(\delta(\Phi(X_n),\Phi(X_0))\geq\varepsilon)
	= 0 . 
	\end{equation}
\end{lemma}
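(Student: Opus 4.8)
The plan is to prove this as an instance of the continuous mapping theorem for convergence in probability, via a direct and self-contained argument that quantifies the failure of uniform continuity of $\Phi$ over the (random) range of $X_0$. Before the main estimate I would record the relevant measurability facts. Since $\Phi$ is continuous, it is $\B(E)/\B(\cE)$-measurable, so each composition $\Phi(X_n)\colon\Omega\to\cE$ is $\F/\B(\cE)$-measurable. Because $(\cE,\delta)$ is separable, one has $\B(\cE\times\cE)=\B(\cE)\otimes\B(\cE)$ and the continuous map $\delta\colon\cE\times\cE\to[0,\infty)$ is Borel measurable; hence $\omega\mapsto\delta(\Phi(X_n(\omega)),\Phi(X_0(\omega)))$ is measurable and the probabilities appearing in~\eqref{eq:weakConvMetric2} are well defined (the analogous remark using separability of $(E,d)$ justifies the probabilities in~\eqref{eq:weakConvMetric1}).

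Next I would fix $\varepsilon\in(0,\infty)$ and, for every $r\in(0,\infty)$, introduce the oscillation function $g_r\colon E\to[0,\infty]$ given by $g_r(x)=\sup\{\delta(\Phi(x),\Phi(y))\colon y\in E,\ d(x,y)<r\}$ together with the set $A_r=\{x\in E\colon g_r(x)\geq\varepsilon\}$, so that $A_r$ consists of those points at which $\Phi$ oscillates by at least $\varepsilon$ across the open $r$-ball. The crucial observation is the inclusion
\[
\{\delta(\Phi(X_n),\Phi(X_0))\geq\varepsilon\}\subseteq\{d(X_n,X_0)\geq r\}\cup\{X_0\in A_r\},
\]
which holds because on the event $\{d(X_n,X_0)<r\}$ the point $y=X_n$ is admissible in the supremum defining $g_r(X_0)$, forcing $g_r(X_0)\geq\delta(\Phi(X_0),\Phi(X_n))\geq\varepsilon$ and hence $X_0\in A_r$. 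Taking probabilities, using subadditivity, and then applying $\limsup_{n\to\infty}$, the hypothesis~\eqref{eq:weakConvMetric1} annihilates the first term and leaves $\limsup_{n\to\infty}\P(\delta(\Phi(X_n),\Phi(X_0))\geq\varepsilon)\leq\P(X_0\in A_r)$ for every $r\in(0,\infty)$.

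It then remains to send $r\downarrow0$. Continuity of $\Phi$ at each $x$ implies $\lim_{r\downarrow0}g_r(x)=0$, so $\bigcap_{k\in\N}A_{1/k}=\emptyset$, and since $(A_{1/k})_{k\in\N}$ is nonincreasing, continuity of the finite measure $\P(X_0\in\cdot)$ from above yields $\lim_{k\to\infty}\P(X_0\in A_{1/k})=\P(X_0\in\emptyset)=0$, which completes the argument. I expect the measurability of $A_r$ to be the main obstacle, and this is precisely where separability of $(E,d)$ is used: choosing a countable dense set $(e_m)_{m\in\N}\subseteq E$, continuity of $\Phi$ and of each $d(\cdot,e_m)$ together with a density-and-approximation argument shows $g_r(x)=\sup_{m\in\N}\big[\1_{\{d(x,e_m)<r\}}\,\delta(\Phi(x),\Phi(e_m))\big]$, a countable supremum of $\B(E)$-measurable functions, whence $g_r$ is $\B(E)$-measurable and $A_r=g_r^{-1}([\varepsilon,\infty))\in\B(E)$. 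As an alternative route one could invoke the subsequence characterization of convergence in probability and reduce to the pointwise implication that almost sure convergence $X_n\to X_0$ forces $\Phi(X_n)\to\Phi(X_0)$ almost surely, but the direct argument above has the advantage of avoiding any appeal to almost sure convergence and of fitting the elementary, explicitly measure-theoretic style of the preceding lemmas.
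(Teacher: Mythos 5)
Your proof is correct, and it takes a genuinely different route from the paper: the paper gives no self-contained argument at all, its entire proof being a single sentence that derives \eqref{eq:weakConvMetric2} from \eqref{eq:weakConvMetric1} by citing two results from the literature (Cox et al.~\cite[Lemma 2.4]{CoxJentzenKurniawanPusnik} and Hutzenthaler et al.~\cite[Lemma 4.2]{HutzenthalerJentzenSalimova2016}), i.e., exactly the kind of standard continuous-mapping machinery for convergence in probability that you mention as an alternative route and deliberately avoid. Your argument instead quantifies everything directly, and each step checks out: the event inclusion $\{\delta(\Phi(X_n),\Phi(X_0))\geq\varepsilon\}\subseteq\{d(X_n,X_0)\geq r\}\cup\{X_0\in A_r\}$ holds precisely because on $\{d(X_n,X_0)<r\}$ the point $X_n$ is admissible in the supremum defining $g_r(X_0)$; the countable-supremum identity $g_r(x)=\sup_{m\in\N}\big[\mathbbm{1}_{\{d(x,e_m)<r\}}\,\delta(\Phi(x),\Phi(e_m))\big]$ is valid (the nontrivial inequality uses the density of $(e_m)_{m\in\N}$, the strictness of $d(x,y)<r$, and continuity of $\Phi$ at the approximated point $y$, all of which you invoke), and it settles $A_r\in\B(E)$; monotonicity of $r\mapsto g_r(x)$ together with pointwise continuity of $\Phi$ gives $\cap_{k\in\N}A_{1/k}=\emptyset$; and continuity from above of the law of $X_0$ finishes. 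What the paper's citation buys is brevity; what your proof buys is a fully self-contained, explicitly measure-theoretic argument in the same ``for completeness'' spirit as the neighbouring Lemmas~\ref{lem:projection_new}--\ref{lemma:equivalence_of_expectation_representations}, with the measurability issues (which the one-line proof in the paper silently delegates to the cited references) handled in full, and with no appeal to almost sure convergence or subsequence extraction. One cosmetic remark: since $g_r$ may take the value $+\infty$, you should write $A_r=g_r^{-1}([\varepsilon,\infty])$ rather than $g_r^{-1}([\varepsilon,\infty))$; the countable-supremum representation yields measurability of this set just the same, and no subsequent step is affected.
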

 \begin{proof}[Proof of Lemma \ref{lemma:stochastic_convergence_under_continuous_transformations}]
 	Note that \eqref{eq:weakConvMetric1}, e.g., Cox et al.~\cite[Lemma 2.4]{CoxJentzenKurniawanPusnik}, and, e.g., Hutzenthaler et al.~\cite[Lemma 4.2]{HutzenthalerJentzenSalimova2016} establish \eqref{eq:weakConvMetric2}. The proof of Lemma~\ref{lemma:stochastic_convergence_under_continuous_transformations} is thus completed.
 \end{proof}

\begin{lemma}
	\label{lemma:equivalence_of_expectation_representations}
	Let 
	$d, m\in\N$, 
	$T\in (0,\infty)$, 
	$L, a\in\R$, 
	$b\in (a,\infty)$, 
	let 
	$\mu\colon \R^d\to\R^d$
	and
	$\sigma\colon \R^d\to\R^{d\times m}$ 
	be functions which satisfy for every $x,y\in\R^d$ that $\max\{\|\mu(x)-\mu(y)\|_{\R^d},\|\sigma(x)-\sigma(y)\|_{HS(\R^m,\R^d)}\}\le L\|x-y\|_{\R^d}$, 
	let 
	$\Phi \colon C([0,T],\R^d) \to \R$ be 
	an at most polynomially growing  
	continuous function, 
	let 
	$(\Omega,\F,\P)$ be a probability space with a normal filtration 
	$(\bF_t)_{t\in [0,T]}$, 
	let 
	$\xi\colon\Omega\to [a,b]^d$ be a continuous uniformly distributed 
	$\bF_0$/$\B([a,b]^d)$-measurable random variable,   
	let 
	$W\colon [0,T]\times\Omega\to\R^m$ be a standard
	$(\bF_t)_{t\in [0,T]}$-Brownian motion, 
	for every $x\in [a,b]^d$ let 
	$X^x=(X^x_t)_{t\in [0,T]}\colon [0,T]\times\Omega\to\R^d$ 
	be an $(\bF_t)_{t\in [0,T]}$-adapted stochastic process with 
	continuous sample paths which satisfies that for every 
	$t\in [0,T]$ it holds $\P$-a.s.~that 
	\begin{equation}\label{eq:lemma_equivalence_sde_for_x}
	X^x_t = x + \int_0^t \mu(X^x_s)\,ds + \int_0^t \sigma(X^x_s)\,dW_s, 
	\end{equation}
	and 
	let 
	$\bX\colon [0,T]\times\Omega\to\R^d$ be an 
	$(\bF_t)_{t\in [0,T]}$-adapted stochastic process with continuous 
	sample paths which satisfies that for every $t\in [0,T]$ it
	holds $\P$-a.s.~that 
	\begin{equation}
	\label{eq:lemma_equivalence_sde_for_bx}
	\bX_t 
	= 
	\xi 
	+ 
	\int_0^t \mu(\bX_s)\,ds 
	+
	\int_0^t \sigma(\bX_s)\,dW_s. 
	\end{equation}
	Then 
	\begin{enumerate}[(i)]
		\item\label{it:measurability}
		it holds for every 
		$x\in [a,b]^d$ that the functions $\Omega\ni\omega\mapsto\Phi((X^x_t(\omega))_{t\in [0,T]})\in\R$
		and $\Omega\ni\omega\mapsto\Phi((\bX_t(\omega))_{t\in [0,T]})\in\R$
		are 
		$\F$/$\B(\R)$-measurable,  
		\item\label{it:GWbound}
		it holds for every $p\in[2,\infty)$, $x,y\in[a,b]^d$ that 
		\begin{equation}
		\left(\E\left[\sup_{t\in[0,T]}\|X^x_t-X^y_t\|_{\R^d}\right]^p\right)^{\!\nicefrac{1}{p}}\le \sqrt{2}\exp\!\left(L^2T\big[p+\sqrt{T}\big]^2\right)\|x-y\|_{\R^d},
		\end{equation}
		\item\label{it:rhs_well_defined_for_every_x} 
		it holds for every 
		$x\in [a,b]^d$ that 
		$
		\E\!\left[|\Phi((X^x_t)_{t\in [0,T]})|
		+ 
		|\Phi((\bX_t)_{t\in [0,T]})| 
		\right] 
		< 
		\infty, 
		$
		\item\label{it:rhs_continuity_wrt_x} 
		it holds that the function 
		$[a,b]^d\ni x\mapsto \E[\Phi((X^x_t)_{t\in [0,T]})]\in\R$ 
		is continuous, and
		\item\label{it:identity_of_interest}
		it holds that 
		\begin{equation}
		\E\big[ \Phi((\bX_t)_{t\in [0,T]}) \big]
		= 
		\tfrac{1}{(b-a)^d}
		\left( 
		\int_{[a,b]^d} \E\big[ \Phi((X^x_t)_{t\in [0,T]}) \big]\,dx
		\right). 
		\end{equation}
	\end{enumerate}
\end{lemma}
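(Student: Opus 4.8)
The plan is to treat the five assertions in turn, front-loading the two standard SDE estimates, since the genuinely interesting content is the change-of-variables identity in item~\eqref{it:identity_of_interest}. For item~\eqref{it:measurability} I would note that, for fixed $x\in[a,b]^d$, adaptedness together with continuity of sample paths makes $\omega\mapsto(X^x_t(\omega))_{t\in[0,T]}$ an $\F$/$\B(C([0,T],\R^d))$-measurable map into the separable Banach space $C([0,T],\R^d)$; composing with the continuous (hence Borel measurable) map $\Phi$ yields measurability, and identically for $\bX$. For item~\eqref{it:GWbound} I would write the difference $X^x_t-X^y_t$ by subtracting the two integral equations in~\eqref{eq:lemma_equivalence_sde_for_x}, take the supremum over $t$ in a subinterval, apply the triangle inequality, bound the drift term by Hölder's inequality and the diffusion term by the Burkholder--Davis--Gundy inequality, insert the Lipschitz bound on $\mu,\sigma$, and close with Gronwall's inequality; the explicit constant $\sqrt{2}\exp(L^2T[p+\sqrt{T}]^2)$ emerges from careful but routine bookkeeping of the Burkholder--Davis--Gundy and Gronwall constants. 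Item~\eqref{it:rhs_well_defined_for_every_x} then follows from the at most polynomial growth of $\Phi$ together with the standard moment bounds $\E[\sup_{t\in[0,T]}\|X^x_t\|_{\R^d}^q]<\infty$ (and the analogue for $\bX$, where boundedness of $\xi$ on $[a,b]^d$ enters), which rest on the same Lipschitz (hence linear) growth and the same Gronwall/Burkholder--Davis--Gundy toolbox.

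For the continuity in item~\eqref{it:rhs_continuity_wrt_x} I would combine the stability estimate from item~\eqref{it:GWbound} with Lemma~\ref{lemma:stochastic_convergence_under_continuous_transformations}. Concretely, item~\eqref{it:GWbound} shows that $X^x\to X^y$ in $L^p(\Omega;C([0,T],\R^d))$, and hence in probability, as $x\to y$; the continuous-mapping statement of Lemma~\ref{lemma:stochastic_convergence_under_continuous_transformations}, applied to $\Phi$, upgrades this to $\Phi((X^x_t)_t)\to\Phi((X^y_t)_t)$ in probability. Choosing $p$ strictly larger than the growth exponent of $\Phi$, the uniform moment bounds render the family $\{\Phi((X^x_t)_t)\}_{x\in[a,b]^d}$ uniformly integrable, so convergence in probability promotes to convergence in $L^1$, that is, $\E[\Phi((X^x_t)_t)]\to\E[\Phi((X^y_t)_t)]$, which is the asserted continuity.

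The heart of the lemma is item~\eqref{it:identity_of_interest}, and the strategy is to realise $\bX$ as the deterministic-initial-condition flow evaluated at the random starting point $\xi$ and then to \emph{freeze} $\xi$ using independence. First, item~\eqref{it:GWbound} and Kolmogorov's continuity theorem furnish a modification of $(x,\omega)\mapsto(X^x_t(\omega))_t$ that is continuous in $x$ for $\P$-a.e.\ $\omega$; feeding the separate measurability from item~\eqref{it:measurability} and this continuity into Lemma~\ref{lem:productMeasurable_new} shows that $(x,\omega)\mapsto\Phi((X^x_t(\omega))_t)$ is $(\B([a,b]^d)\otimes\F)$/$\B(\R)$-measurable, so that $\psi(x):=\E[\Phi((X^x_t)_t)]$ is a well-defined function on $[a,b]^d$, continuous by item~\eqref{it:rhs_continuity_wrt_x}. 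Next, because $\xi$ is $\bF_0$-measurable while $W$ has increments independent of $\bF_0$, the random vector $\xi$ is independent of the driving Brownian motion $W$, and pathwise uniqueness for the Lipschitz SDE identifies $\bX$ with the flow $X^\xi$. A factorisation argument---carried out most cleanly by first approximating $\xi$ by $[a,b]^d$-valued simple $\bF_0$-measurable random variables $\xi_n$, for which the decomposition of $\Omega$ into the level sets together with the independence of the $\bF_0$-indicators and the $\sigma(W)$-measurable quantities $\Phi((X^{x_j}_t)_t)$ makes $\E[\Phi((\bX^{(n)}_t)_t)]=\E[\psi(\xi_n)]$ immediate, and then passing to the limit via item~\eqref{it:GWbound} and the uniform integrability from the previous paragraph---yields $\E[\Phi((\bX_t)_t)]=\E[\psi(\xi)]$. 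Finally, since $\xi$ is uniformly distributed on $[a,b]^d$, we have $\E[\psi(\xi)]=\tfrac{1}{(b-a)^d}\int_{[a,b]^d}\psi(x)\,dx$, which is exactly the claimed identity.

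I expect the main obstacle to be the rigorous execution of this last freezing step: establishing the joint measurability that makes $\psi$ and the substitution $X^\xi$ meaningful, verifying the independence $\xi\perp W$ from $\bF_0$-measurability, and justifying the exchange $\E[\Phi((X^\xi_t)_t)]=\E[\psi(\xi)]$. The a.s.\ identification $\bX=X^\xi$ via pathwise uniqueness and the passage to the limit in the simple-function approximation are precisely where the hypotheses---normal filtration, adaptedness, the Lipschitz property of $\mu,\sigma$, continuity of $\xi$, and Lemma~\ref{lem:productMeasurable_new}---are all consumed at once. By contrast, items~\eqref{it:measurability}--\eqref{it:rhs_continuity_wrt_x} are standard and, modulo tracking constants, routine.
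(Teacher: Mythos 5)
Your proposal is correct, and for the two substantive items it departs from the paper's own proof in an interesting way. Items~\eqref{it:measurability}, \eqref{it:GWbound}, and \eqref{it:rhs_continuity_wrt_x} match the paper (path-space measurability via the Borel structure of $C([0,T],\R^d)$; subtract-triangle-BDG-H\"older-Gronwall with the same constant bookkeeping; convergence in probability via Lemma~\ref{lemma:stochastic_convergence_under_continuous_transformations} upgraded to $L^1$ by uniform integrability). The divergence is in items~\eqref{it:rhs_well_defined_for_every_x} and, above all, \eqref{it:identity_of_interest}. The paper obtains the moment bounds of item~\eqref{it:rhs_well_defined_for_every_x} from uniform $L^p$-bounds for Euler--Maruyama approximations $\Psi^N_{x,W}$ plus a Fatou argument, and then proves the identity in item~\eqref{it:identity_of_interest} by staying with these approximations: since $\Psi^N_{x,w}$ is an \emph{explicit} measurable functional of the initial point and the Brownian path, independence of $\xi$ and $W$ lets the law of $(\xi,W)$ factor as a product measure, Fubini splits the expectation, and dominated convergence (using the uniform bounds and \eqref{eq:supDiscret1}--\eqref{eq:supDiscret2}) passes $N\to\infty$. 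You instead work with the exact solutions throughout: a Kolmogorov-continuity modification of the flow together with Lemma~\ref{lem:productMeasurable_new} gives joint measurability, simple $\bF_0$-measurable approximations $\xi_n$ of $\xi$ plus a level-set/Gronwall argument and independence of $\bF_0$ from the driving noise give the freezing identity, and stability in the initial condition passes to the limit. Both routes are sound; the paper's buys a very concrete factorization (no substitution of a random initial condition into a stochastic integral is ever needed) at the price of invoking Euler-scheme convergence theory, while yours avoids time discretization entirely but concentrates all the difficulty in the freezing step you yourself flag. One point there deserves explicit care: the factorization $\E[\mathbbm{1}_{A_j}\Phi((X^{x_j}_t)_{t\in[0,T]})]=\P(A_j)\,\E[\Phi((X^{x_j}_t)_{t\in[0,T]})]$ requires $\Phi((X^{x_j}_t)_{t\in[0,T]})$ to be (a.s.\ equal to) a functional of $W$ alone; the hypotheses only give $(\bF_t)_{t\in[0,T]}$-adaptedness, so you must first identify $X^{x_j}$ with the strong solution via pathwise uniqueness (Yamada--Watanabe, or indeed via the $\sigma(W)$-measurable Euler approximations) before independence from $\bF_0$ is available. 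This is standard under the Lipschitz assumptions and you have the needed ingredients on the table, so it is a gap in exposition rather than in substance.
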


\begin{proof}[Proof of Lemma~\ref{lemma:equivalence_of_expectation_representations}]
	Throughout this proof let $c\in[1,\infty)$ be a real number which satisfies for every $w\in C([0,T],\R^m)$ that 
	\begin{equation}
	|\Phi(w)| \leq c\left[1+\sup_{t\in [0,T]} \|w_t\|_{\R^d}\right]^c	\label{eq:lemma_equivalence_at_most_polynomial_growth}, 
	\end{equation}
	let $p_t\colon C([0,T],\R^m) \to \R^m$, $t\in [0,T]$, be the functions which satisfy for every $t\in [0,T]$, $w=(w_s)_{s\in [0,T]}\in C([0,T],\R^m)$ that $p_t(w)=w_t$, and let $\Psi^N_{x,w}\colon[0,T]\to\R^d$, $N\in \N$, $x\in\R^d$, $w\in C([0,T],\R^m),$
	be the functions which satisfy for every $w\in C([0,T],\R^m)$, $x\in\R^d$, $N\in \N$, $ n\in\{0,1,\ldots,N-1\}$, $t\in[\frac{nT}{N},\frac{(n+1)T}{N}]$ that $\Psi^N_{x,w}(0) = x$ and
	\begin{equation}
	\Psi^N_{x,w}(t) = \Psi^N_{x,w}\!\left(\tfrac{nT}{N}\right)+\left(\tfrac{nt}{T}-n\right)\left[\mu\!\left(\Psi^N_{x,w}\!\left(\tfrac{nT}{N}\right)\right)\tfrac{T}{N}+\sigma\!\left(\Psi^N_{x,w}\!\left(\tfrac{nT}{N}\right)\right)	\big( w_{\frac{(n+1)T}{N}} - w_{\frac{nT}{N}} \big)\right].\label{eq:PsiDef}
	\end{equation}
	Observe that the fact that the Borel sigma-algebra $\B(C([0,T],\R^m))$ is generated by the set 
	$\cup_{t\in[0,T]}\cup_{A\in\B(\R^m)}\{(p_t)^{-1}(A)\}$ (cf., for example, Klenke~\cite[Theorem~21.31]{Klenke_2014}), the hypothesis that for every $x\in[a,b]^d$ it holds that $X^x\colon [0,T]\times\Omega\to\R^d$ is an $(\bF_t)_{t\in [0,T]}$-adapted stochastic process with 
	continuous sample paths, and the hypothesis that  $\bX\colon [0,T]\times\Omega\to\R^d$ is an
	$(\bF_t)_{t\in [0,T]}$-adapted stochastic process with continuous sample paths demonstrate that the functions
	\begin{equation}
	\Omega\ni\omega\mapsto(X^x_t(\omega))_{t\in[0,T]}\in C([0,T],\R^d)\label{eq:omegaToX}
	\end{equation}
	and
	\begin{equation}
	\Omega\ni\omega\mapsto(\bX_t(\omega))_{t\in[0,T]}\in C([0,T],\R^d)\label{eq:omegaTobX}
	\end{equation}
	are 
	$\F$/$\B(C([0,T],\R^d))$-measurable.
	Combining this with the fact that the function 
	$\Phi\colon\allowbreak C([0,T],\R^d)\to\R$ is 
	$\B(C([0,T],\R^d))$/$\B(\R)$-measurable 
	implies that for every $x\in [a,b]^d$ it holds that 
	the functions $\Omega\ni\omega\mapsto\Phi((X^x_t(\omega))_{t\in[0,T]})\in\R$ and 
	$\Omega\ni\omega\mapsto\Phi((\bX_t(\omega))_{t\in[0,T]})\in\R$
	are $\F$/$\B(\R)$-measurable. This proves 
	item~\eqref{it:measurability}. 
	Next observe that \eqref{eq:PsiDef}, the hypothesis that $\mu\colon\R^d\to\R^d$ and $\sigma\colon\R^d\to\R^{d\times m}$ are globally Lipschitz continuous, and the fact that for every $p\in(0,\infty)$ it holds that $\E[\|\xi\|_{\R^d}^p]<\infty$ ensure that for every $p\in(0,\infty)$ it holds that
	\begin{equation}\label{eq:lemma_equivalence_lp_sup_numerics}
	\begin{split}
	&\sup_{N\in\N}
	\sup_{x\in [a,b]^d}
	\left( 
	\E\!\left[ 
	\sup_{t\in [0,T]} \| \Psi^N_{x, W}(t) \|_{\R^d}^p
	\right] 
	+
	\E\!\left[ 
	\sup_{t\in [0,T]} \| \Psi^N_{\xi, W}(t) \|_{\R^d}^p
	\right] 
	\right)\\
	&=\sup_{N\in\N}
	\sup_{x\in [a,b]^d}
	\left( 
	\E\!\left[ 
	\max_{n\in \{0,1,\ldots,N\}} \left\| \Psi^N_{x, W}\!\left(\tfrac{nT}{N}\right) \right\|_{\R^d}^p
	\right] 
	+
	\E\!\left[ 
	\max_{n\in \{0,1,\ldots,N\}}  \left\| \Psi^N_{\xi, W}\!\left(\tfrac{nT}{N}\right) \right\|_{\R^d}^p
	\right] 
	\right)<\infty
	\end{split}
	\end{equation}
	(cf., for example, Kloeden \& Platen~\cite[Section 10.6]{KloedenPlaten1992}). Next note that~\eqref{eq:lemma_equivalence_sde_for_x},  \eqref{eq:lemma_equivalence_sde_for_bx}, \eqref{eq:PsiDef}, the fact that $\mu\colon\R^d\to\R^d$ and $\sigma\colon\R^d\to\R^{d\times m}$ are locally Lipschitz continuous functions, and e.g., Hutzenthaler \& Jentzen~\cite[Theorem 3.3]{HutzenthalerJentzen2015_Memoirs} ensure that for every $x\in[a,b]^d$, $\varepsilon\in(0,\infty)$ it holds that
	\begin{equation}
	\label{eq:supDiscret1}
	\limsup_{N\to\infty}\P\!\left(\sup_{t\in[0,T]}\|X^x_t-\Psi^N_{x,W}(t)\|_{\R^d}\ge\varepsilon\right)=0
	\end{equation}
	and
	\begin{equation}
	\label{eq:supDiscret2}
	\limsup_{N\to\infty}\P\!\left(\sup_{t\in[0,T]}\|\bX_t-\Psi^N_{\xi,W}(t)\|_{\R^d}\ge\varepsilon\right)=0.
	\end{equation}
	Combining \eqref{eq:omegaToX}, \eqref{eq:omegaTobX}, and, e.g., Hutzenthaler \& Jentzen~\cite[Lemma 3.10]{HutzenthalerJentzen2015_Memoirs} hence demonstrates that for every $x\in[a,b]^d, p\in(0,\infty)$ it holds that
	\begin{equation}
	\begin{split}
	\E\!\left[\sup_{t\in[0,T]}\|X^x_t\|^p\right]&\le \liminf_{N\to\infty} \E\!\left[\sup_{t\in[0,T]}\|\Psi^N_{x,W}(t)\|^p\right]\\
	&\le\sup_{N\in\N}\E\!\left[\sup_{t\in[0,T]}\|\Psi^N_{x,W}(t)\|^p\right]
	\end{split}
	\end{equation}
	and
	\begin{equation}
	\begin{split}
	\E\!\left[\sup_{t\in[0,T]}\|\bX_t\|^p\right]&\le \liminf_{N\to\infty} \E\!\left[\sup_{t\in[0,T]}\|\Psi^N_{\xi,W}(t)\|^p\right]\\
	&\le\sup_{N\in\N}\E\!\left[\sup_{t\in[0,T]}\|\Psi^N_{\xi,W}(t)\|^p\right].
	\end{split}
	\end{equation}
	This and \eqref{eq:lemma_equivalence_lp_sup_numerics} assure that for every $ p \in (0,\infty) $ it holds that
	\begin{equation}\label{eq:lemma_equivalence_lp_sup_exact}
	\sup_{x\in [a,b]^d}
	\left(
	\E\!\left[
	\sup_{t\in [0,T]} \| X^x_t \|_{\R^d}^p
	\right]  
	+
	\E\!\left[ 
	\sup_{t\in [0,T]} \| \bX_t \|_{\R^d}^p
	\right]
	\right)
	< \infty .
	\end{equation}
	Combining 
	\eqref{eq:lemma_equivalence_at_most_polynomial_growth} and the fact that $\forall\,r \in(0,\infty)$, $ a,b\in\R\colon |a+b|^r\le 2^r(|a|^r+|b|^r)$ therefore demonstrates that for every $p\in(0,\infty)$ it holds that
	\begin{equation}\label{eq:lemma_equivalence_uniform_lp_boundedness_Phi}
	\begin{split}
	& 
	\sup_{x\in [a,b]^d}
	\left( \E\!\left[ 
	|\Phi((X^x_t)_{t\in [0,T]})|^p
	\right]
	+
	\E\!\left[ 
	|\Phi((\bX_t)_{t\in [0,T]})|^p
	\right]
	\right) 
	\\&\leq 
	\sup_{x\in [a,b]^d} 
	\left( 
	c^p\,\E\!\left[ 
	|1+\sup\nolimits_{t\in [0,T]} \|X^x_t\|_{\R^d}|^{cp}
	\right]
	+
	c^p\,\E\!\left[ 
	|1+\sup\nolimits_{t\in [0,T]} \|\bX_t\|_{\R^d}|^{cp}
	\right]
	\right) 
	\\&\leq 
	2^{cp}\,c^p
	\left(
	\sup_{x\in [a,b]^d}
	\E\!\left[
	2 
	+ \sup\nolimits_{t\in [0,T]} \|X^x_t\|_{\R^d}^{cp} 
	+ \sup\nolimits_{t\in [0,T]} \|\bX_t\|_{\R^d}^{cp}
	\right]
	\right)
	<\infty .
	\end{split}
	\end{equation}
	This establishes item~\eqref{it:rhs_well_defined_for_every_x}. 
	In the next step we observe that 
	\eqref{eq:lemma_equivalence_sde_for_x} 
	ensures that for every 
	$ x,y \in [a,b]^d$, $ t \in [0,T]$ 
	it holds $\P$-a.s.~that 
	\begin{equation}
	X^x_t - X^y_t 
	= 
	x-y 
	+ 
	\int_0^t \left(\mu(X^x_s)-\mu(X^y_s)\right) ds 
	+ 
	\int_0^t \left(\sigma(X^x_s)-\sigma(X^y_s)\right) dW_s. 
	\end{equation}
	The triangle inequality hence ensures that for every 
	$ x,y \in [a,b]^d$, $t\in[0,T]$ 
	it holds $\P$-a.s.~that
	\begin{equation}
	\begin{split}
	&\sup_{s\in [0,t]}\|X^x_s - X^y_s \|_{\R^d}
	\\
	&\leq 
	\|x-y \|_{\R^d}
	+ 
	\sup_{s\in [0,t]} \int_0^s \|\mu(X^x_r)-\mu(X^y_r)\|_{\R^d} \,dr
	+ 
	\sup_{s\in [0,t]}\left\|\int_0^t
	\left(\sigma(X^x_r)-\sigma(X^y_r)\right) dW_r\right\|_{\R^d}\\
	&\leq \|x-y \|_{\R^d}
	+ 
	L \left[\sup_{s\in [0,t]} \int_0^s \|X^x_r-X^y_r\|_{\R^d} \,dr \right]
	+\sup_{s\in [0,t]}\left\|\int_0^s 
	\left(\sigma(X^x_r)-\sigma(X^y_r)\right) dW_r\right\|_{\R^d}\\
	&= \|x-y \|_{\R^d}
	+ 
	L \int_0^t \|X^x_r-X^y_r\|_{\R^d} \,dr 
	+\sup_{s\in [0,t]}\left\|\int_0^s
	\left(\sigma(X^x_r)-\sigma(X^y_r)\right) dW_r\right\|_{\R^d}.
	\end{split}
	\end{equation}
	Therefore, we obtain for every 
	$ p \in [1,\infty)$, $x,y \in [a,b]^d$, $t\in[0,T]$
	that 
	\begin{equation}\label{eq:supDiffusion}
	\begin{split}
	\left(\E\!\left[\sup_{s\in [0,t]} \|X^x_s-X^y_s\|^p_{\R^d}\right]\right)^{\!\nicefrac{1}{p}}
	& \leq 
	\|x-y\|_{\R^d} 
	+ 
	L \int_0^t \left(\E\!\left[\|X^x_r-X^y_r\|^p_{\R^d}\right]\right)^{\!\nicefrac{1}{p}}dr 
	\\
	&
	+
	\left(\E\!\left[\sup_{s\in [0,t]} 
	\left\|\int_0^s 
	\left(\sigma(X^x_r)-\sigma(X^y_r)\right)\,dW_r\right\|^p_{\R^d}
	\right]\right)^{\!\nicefrac{1}{p}} . 
	\end{split}
	\end{equation}
	The Burkholder-Davis-Gundy type inequality in Da Prato \& Zabczyk~\cite[Lemma 7.2]{DaPratoZabczyk2008} hence shows that for every 
	$ p \in [2,\infty)$, $ x,y \in [a,b]^d$, $t\in[0,T]$ 
	it holds that 
	\begin{equation}
	\begin{split}
	\left(\E\!\left[\sup_{s\in [0,t]} \|X^x_s-X^y_s\|^p_{\R^d}\right]\right)^{\!\nicefrac{1}{p}}
	& \leq  
	\|x-y\|_{\R^d} 
	+ 
	L \int_0^t \left(\E\big[\|X^x_r-X^y_r\|^p_{\R^d}\big]\right)^{\!\nicefrac{1}{p}}dr 
	\\
	&
	\quad +p
	\left[\int_0^t 
	\left(\E\big[\|\sigma(X^x_r)-\sigma(X^y_r)\|_{HS(\R^d,\R^m)}^p\big]\right)^{\!\nicefrac{2}{p}}
	dr \right]^{\nicefrac{1}{2}} . 
	\end{split}
	\end{equation}
	This demonstrates that for every $p\in[2,\infty)$, $x,y\in[a,b]^d$, $t\in[0,T]$ it holds that
	\begin{equation}
	\begin{split}
	\left(\E\!\left[\sup_{s\in [0,t]} \|X^x_s-X^y_s\|^p_{\R^d}\right]\right)^{\!\nicefrac{1}{p}} 
	& \le
	\|x-y\|_{\R^d} 
	+ 
	L \int_0^t \left(\E\big[\|X^x_r-X^y_r\|^p_{\R^d}\big]\right)^{\!\nicefrac{1}{p}}  dr 
	\\
	&
	\quad + Lp
	\left[\int_0^t 
	\left(\E\big[\|X^x_r-X^y_r\|_{\R^d}^p\big]\right)^{\!\nicefrac{2}{p}}
	dr \right]^{\nicefrac{1}{2}}. 
	\end{split}
	\end{equation}
	H{\"o}lder's inequality hence proves that for every $p\in[2,\infty)$, $x,y\in[a,b]^d$, $t\in[0,T]$ it holds that
	\begin{equation}
	\begin{split}
	\left(\E\!\left[\sup_{s\in [0,t]} \|X^x_s-X^y_s\|^p_{\R^d}\right]\right)^{\!\nicefrac{1}{p}} 
	& \le
	\|x-y\|_{\R^d} 
	+ 
	L\sqrt{t}
	\left[\int_0^t 
	\left(\E\big[\|X^x_r-X^y_r\|_{\R^d}^p\big]\right)^{\!\nicefrac{2}{p}}
	dr \right]^{\nicefrac{1}{2}}
	\\
	&
	\quad + Lp
	\left[\int_0^t 
	\left(\E\big[\|X^x_r-X^y_r\|_{\R^d}^p\big]\right)^{\!\nicefrac{2}{p}}
	dr \right]^{\nicefrac{1}{2}}
	\\
	&\le
	\|x-y\|_{\R^d} 
	+ 
	L\big[p+\sqrt{T}\big]
	\left[\int_0^t 
	\left(\E\big[\|X^x_r-X^y_r\|_{\R^d}^p\big]\right)^{\!\nicefrac{2}{p}}
	dr \right]^{\nicefrac{1}{2}}. 
	\end{split}
	\end{equation}
	The fact that $\forall \, v,w\in\R\colon|v+w|^2\le 2v^2+2w^2$ therefore shows that for  every
	$ p \in [2,\infty)$, 
	$ x,y \in [a,b]^d$, 
	$t\in[0,T]$ it holds that 
	\begin{equation}
	\begin{split}
	&\left(\E\!\left[\sup_{s\in [0,t]} \|X^x_s-X^y_s\|^p_{\R^d}\right]\right)^{\!\nicefrac{2}{p}}\\
	&\leq  
	2\|x-y\|_{\R^d}^2 
	+ 
	2L^2\big[p+\sqrt{T}\big]^2\int_0^t 
	\left(\E\big[\|X^x_r-X^y_r\|_{\R^d}^p\big]\right)^{\nicefrac{2}{p}}
	\,dr\\
	&\leq 2\|x-y\|_{\R^d}^2 + 
	2L^2\big[p+\sqrt{T}\big]^2\int_0^t \left(\E\!\left[\sup_{s\in [0,r]}\|X^x_s-X^y_s\|^p_{\R^d}\right]\right)^{\!\nicefrac{2}{p}}dr
	.
	\end{split}
	\end{equation}
	Combining the Gronwall inequality (cf., e.g., Andersson et al.~\cite[Lemma 2.6]{Andersson2015} (with $\alpha=0$, $\beta=0$, $a=2\|x-y\|_{\R^d}^2$, $b=3L^2\big[p+\sqrt{T}\big]$, $e=([0,T]\ni t \mapsto  \left(\E\!\left[\sup_{s\in [0,t]} \|X^x_s-X^y_s\|^p_{\R^d}\right]\right)^{2/p}\in[0,\infty])$ in the notation of Lemma 2.6)) and \eqref{eq:lemma_equivalence_lp_sup_exact} hence establishes that for every 
	$ p \in [2,\infty)$, $ x,y \in [a,b]^d$, $t\in[0,T]$ it holds that 
	\begin{equation}
	\left(\E\!\left[\sup_{s\in [0,t]} \|X^x_s-X^y_s\|^p_{\R^d}\right]\right)^{\!\nicefrac{2}{p}}
	\leq 
	2\|x-y\|_{\R^d}^2 \exp\!\left(2L^2t\big[p+\sqrt{T}\big]^2\right).
	\end{equation}
	Therefore, we obtain that for every $p\in[2,\infty)$, $x,y\in[a,b]^d$ it holds that
	\begin{equation}
	\left(\E\left[\sup_{t\in[0,T]}\|X^x_t-X^y_t\|^p\right]\right)^{\!\nicefrac{1}{p}}\le \sqrt{2}\exp\!\left(L^2T\big[p+\sqrt{T}\big]^2\right)\|x-y\|_{\R^d}.
	\end{equation}
	This establishes item \eqref{it:GWbound}. Next observe that item \eqref{it:GWbound} and Jensen's inequality imply that for every 
	$ p \in (0,\infty)$ it holds that 
	\begin{equation}
	\label{eq:uniform_integrability}
	\sup_{x,y\in [a,b]^d, x\neq y}
	\left( 
	\frac{\left(\E\!\left[\sup_{t\in [0,T]} \|X^x_t-X^y_t\|_{\R^d}^p\right]\right)^{\!\nicefrac{1}{p}}}
	{\|x-y\|_{\R^d}}
	\right)
	< \infty . 
	\end{equation}
	The hypothesis that the function 
	$\Phi\colon C([0,T],\R^d)\to\R$ is continuous and Lemma
	\ref{lemma:stochastic_convergence_under_continuous_transformations} 
	hence ensure that for every $\varepsilon\in (0,\infty)$, $(x_n)_{n\in \N_0}\subseteq  [a,b]^d$ with $\limsup_{n\to \infty} \|x_0-x_n\|_{\R^d}=0$ it holds that 
	\begin{equation}
	\limsup_{n\to \infty} 
	\P\!
	\left( 
	|\Phi((X^{x_0}_t)_{t\in [0,T]}) - \Phi((X^{x_n}_t)_{t\in [0,T]})| \geq \varepsilon 
	\right) 
	= 
	0 . 
	\end{equation}
	Combining \eqref{eq:lemma_equivalence_uniform_lp_boundedness_Phi} with, e.g., Hutzenthaler et al.~\cite[Proposition 4.5]{HutzenthalerJentzenSalimova2016} therefore implies that for every $(x_n)_{n\in \N_0}\subseteq  [a,b]^d$ with $\limsup_{n\to \infty} \|x_0-x_n\|_{\R^d}=0$ it holds that 
	\begin{equation}
	\limsup_{n\to \infty} 
	\E\big[ 
	|\Phi((X^{x_0}_t)_{t\in [0,T]}) - \Phi((X^{x_n}_t)_{t\in [0,T]})|
	\big]  
	= 0.
	\end{equation}
	This establishes item~\eqref{it:rhs_continuity_wrt_x}. In the next step we observe that \eqref{eq:supDiscret1} and \eqref{eq:supDiscret2} ensure that for every 
	$ \varepsilon \in (0,\infty) $, 
	$ x\in [a,b]^d $
	it holds that 
	\begin{equation}\label{eq:euler_scheme_convergence_in_probability}
	\limsup_{N\to\infty} \left[
	\P\!\left(
	\sup_{t\in [0,T]} \|
	\Psi^N_{x,W}(t)
	- 
	X^x_t  \|_{\R^d} 
	\geq \varepsilon \right)
	+
	\P\!\left(
	\sup_{t\in [0,T]} \|
	\Psi^N_{\xi,W}(t)
	- 
	\bX_t  \|_{\R^d} 
	\geq \varepsilon \right)
	\right]
	= 0.
	\end{equation}
	The hypothesis that the function $\Phi$ is continuous and 
	Lemma~\ref{lemma:stochastic_convergence_under_continuous_transformations}
	therefore demonstrate that for every 
	$ \varepsilon\in (0,\infty) $, $ x \in [a,b]^d $ it holds that 
	\begin{equation}\label{eq:stochastic_convergence_combined}
	\limsup_{N\to\infty} 
	\left[
	\P\!
	\left(
	|\Phi(\Psi^N_{x,W}) - \Phi((X^x_t)_{t\in [0,T]})| 
	+ 
	|\Phi(\Psi^N_{\xi,W})-\Phi((\bX_t)_{t\in [0,T]})| 
	\geq \varepsilon
	\right)
	\right]
	= 0 . 
	\end{equation}
	Next observe that 
	\eqref{eq:lemma_equivalence_at_most_polynomial_growth}
	assures that for every 
	$ N \in \N$, 
	$ x \in [a,b]^d $, 
	$ p \in (0,\infty) $ 
	it holds that
	\begin{equation}\label{eq:lemma_equivalence_p_integrability}
	\begin{split}
	&  \E\!\left[ 
	|\Phi(\Psi^N_{ x, W} ) 
	- 
	\Phi( (X^x_t)_{t\in [0,T]} ) |^p   
	+
	|\Phi( \Psi^N_{ \xi, W} ) 
	- 
	\Phi( (\bX_t)_{t\in [0,T]} ) |^p  
	\right] 
	\\[1.5 ex]
	&\leq 
	2^p\,\E\!\left[
	| \Phi( \Psi^N_{ x, W } ) |^p
	+ 
	|\Phi( (X^x_t)_{t\in [0,T]} ) |^p
	\right]
	+
	2^p\,\E\!\left[| \Phi(\Psi^N_{ \xi, W} ) |^p
	+ 
	| \Phi( (\bX_t)_{t\in [0,T]} ) |^p
	\right]
	\\[1.5 ex]
	&\leq 
	2^pc^p \,
	\E\Big[
	\left|
	1+\sup\nolimits_{t\in [0,T]} 
	\| \Psi^N_{ x, W} ( t )  \|_{\R^d}
	\right|^{cp}
	+
	\left|
	1+\sup\nolimits_{t\in [0,T]} \|X^x_t\|_{\R^d}
	\right|^{cp}
	\Big]
	\\[1.5 ex]
	&+ 
	2^pc^p \,
	\E\Big[
	\left|
	1+\sup\nolimits_{t\in [0,T]} 
	\| \Psi^N_{ \xi, W} (t) \|_{\R^d}
	\right|^{cp}
	+
	\left|
	1+\sup\nolimits_{t\in [0,T]} \|\bX_t\|_{\R^d}
	\right|^{cp}
	\Big]
	\\[1.5 ex] 
	&\leq 
	4^pc^p \,
	\E\!\left[
	2 
	+ 
	\sup\nolimits_{t\in [0,T]} \| \Psi^N_{ x, W} ( t ) \|_{\R^d}^{cp}
	+ 
	\sup\nolimits_{t\in [0,T]} \|X^x_t\|_{\R^d}^{cp}
	\right]  \\[1.5 ex]
	&+
	4^pc^p \,
	\E\!\left[
	2 
	+ 
	\sup\nolimits_{t\in [0,T]} \| \Psi^N_{ \xi, W} ( t ) \|_{\R^d}^{cp}
	+ 
	\sup\nolimits_{t\in [0,T]} \|\bX_t\|_{\R^d}^{cp}
	\right].
	\end{split}
	\end{equation} 
	Combining \eqref{eq:lemma_equivalence_lp_sup_numerics} and 
	\eqref{eq:lemma_equivalence_lp_sup_exact}
	hence shows that for every $ p \in (0,\infty) $ it holds that 
	\begin{equation}
	\label{eq:lemma_equivalence_l2_boundedness}
	\sup_{N\in\N}\sup_{x\in [a,b]^d} 
	\left(
	\E\!\left[ 
	|\Phi(\Psi^N_{ x, W} ) 
	- 
	\Phi( (X^x_t)_{t\in [0,T]} ) |^p   
	+
	|\Phi(\Psi^N_{ \xi, W}) 
	- 
	\Phi( (\bX_t)_{t\in [0,T]} ) |^p 
	\right] 
	\right)
	< \infty . 
	\end{equation}
	This, \eqref{eq:stochastic_convergence_combined}, and, e.g., Hutzenthaler et al.~\cite[Proposition 4.5]{HutzenthalerJentzenSalimova2016} imply that for every $ x \in [a,b]^d $ 
	it holds that 
	\begin{equation}\label{eq:lemma_equivalence_first_l1_convergence}
	\limsup_{N\to\infty} 
	\left(
	\E\big[|\Phi(\Psi^N_{x,W}) 
	- 
	\Phi( (X^x_t)_{t\in [0,T]})|
	\big]
	+ 
	\E\big[|\Phi(\Psi^{N}_{\xi, W}) 
	- 
	\Phi( (\bX_t)_{t\in [0,T]})|\big]
	\right) = 0.
	\end{equation}
	Combining \eqref{eq:lemma_equivalence_l2_boundedness}
	with Lebesgue's dominated convergence theorem therefore demonstrates that 
	\begin{equation}\label{eq:lemma_equivalence_second_l1_convergence}
	\limsup_{N\to\infty} 
	\left(\int_{[a,b]^d}  
	\E\big[| \Phi( \Psi^{N}_{ x, W} )
	- 
	\Phi( (X^x_t)_{t\in [0,T]})|\big]\,dx 
	\right)
	= 0. 
	\end{equation}
	In addition, observe that \eqref{eq:lemma_equivalence_p_integrability}, \eqref{eq:lemma_equivalence_lp_sup_numerics}, and \eqref{eq:lemma_equivalence_lp_sup_exact} prove that for all $p\in(0,\infty)$ it holds that
	\begin{equation}\label{eq:Phi_Psi_integrability}
	\sup_{N\in\N} \sup_{x\in[a,b]^d} \E\big[|\Phi(\Psi^N_{x,W})|^p\big]<\infty.
	\end{equation}
	Next observe that~\eqref{eq:lemma_equivalence_first_l1_convergence}
	and the fact that $\xi$ and $W$ are independent imply that 
	\begin{align}
	\label{eq:relationBetweenExpectationsOfbXAndXx_1_new}
	\begin{split}
	\E\!\left[ \Phi( (\bX_t)_{t\in [0,T]} ) \right] 
	&=
	\lim_{ N \rightarrow \infty }
	\E\Big[ \Phi\big(\Psi^{N}_{\xi, W}\big) \Big]
	\\&=
	\lim_{ N \rightarrow \infty }\left[
	\int_{ \Omega }
	\Phi\!\left(\Psi^{N}_{\xi(\omega), (W_t(\omega))_{t\in[0,T]}}\right) \P(d\omega)\right]
	\\&=  
	\lim_{ N \rightarrow \infty }\left[
	\int_{ [a,b]^d \times  C([0,T],\R^d) }
	\Phi\big(\Psi^{N}_{x, W}\big) \, \big( (\xi, W)(\P) \big)(dx,dw)\right]
	\\&=  
	\lim_{ N \rightarrow \infty }\left[
	\int_{ [a,b]^d \times C([0,T],\R^d) }
	\Phi\big(\Psi^{N}_{x, w}\big) \,  \big( (\xi(\P)) \otimes (W(\P)) \big)(dx,dw)\right]. 
	\end{split}
	\end{align}
	Combining Fubini's theorem,  \eqref{eq:lemma_equivalence_second_l1_convergence}, and \eqref{eq:Phi_Psi_integrability} with Lebesgue's dominated convergence theorem
	therefore assures that 
	\begin{align}
	\label{eq:relationBetweenExpectationsOfbXAndXx_2_new}
	\begin{split}
	\E\!\left[ \Phi((\bX_t)_{t\in [0,T]}) \right] 
	&=  
	\lim_{ N \rightarrow \infty }\left[
	\int_{ [a,b]^d }
	\left(
	\int_{  C([0,T],\R^d) }
	\Phi\big(\Psi^{N}_{x, w}\big) \, (W(\P))(dw)
	\right) (\xi(\P))(dx)\right]
	\\&=
	\lim_{ N \rightarrow \infty }\left[
	\int_{ [a,b]^d }
	\left(
	\int_{ \Omega }
	\Phi\big(\Psi^{N}_{x, w}\big) \, \P(dw)
	\right) (\xi(\P))(dx)\right]
	\\&=  
	\lim_{ N \rightarrow \infty }\left[
	\int_{ [a,b]^d }
	\E\Big[ \Phi\big(\Psi^{N}_{x, W}\big) \Big] \, (\xi(\P))(dx)\right]
	\\&=  
	\int_{ [a,b]^d }
	\lim_{ N \rightarrow \infty }
	\E\Big[  \Phi\big(\Psi^{N}_{x, W}\big) \Big] \, (\xi(\P))(dx)
	\\&=
	\int_{[a,b]^d}
	\E\!\left[ \Phi( (X^x_t)_{t\in [0,T]} ) \right]
	(\xi(\P))(dx)
	\\&
	=
	\frac{1}{(b-a)^d} \left(\int_{[a,b]^d} \E\!\left[ \Phi( (X^x_t)_{t\in [0,T]} ) \right] dx\right). 
	\end{split}
	\end{align}
	This establishes item \eqref{it:identity_of_interest}. The proof of Lemma~\ref{lemma:equivalence_of_expectation_representations} 
	is thus completed. 
\end{proof}

\begin{proposition}
	\label{proposition:minimizingPropertyApplied_new}
	Let 
	$d, m\in\N$, 
	$T\in (0,\infty)$, 
	$a\in\R$, 
	$b\in (a,\infty)$, 
	let 
	$\mu\colon \R^d\to\R^d$ and 
	$\sigma\colon \R^d\to\R^{d\times m}$ 
	be globally Lipschitz continuous functions,
	let 
	$\varphi\colon\R^d\to\R$ be a function,
	let 
	$u=(u(t,x))_{(t,x)\in [0,T]\times\R^d}\in C^{1,2}([0,T]\times\R^d,\R)$ 
	be a function with at most polynomially growing partial derivatives which satisfies 
	for every $t\in [0,T]$, $x\in\R^d$ that 
	$u(0,x) = \varphi(x)$ and 
	\begin{equation}
	\label{eq:differentialu}
	\tfrac{\partial u}{\partial t}(t,x) 
	= 
	\tfrac12 \operatorname{Trace}_{\R^d}\!\big(
	\sigma(x)[\sigma(x)]^{*}(\operatorname{Hess}_x u)(t,x)\big)
	+ 
	\langle 
	\mu(x),(\nabla_x u)(t,x)
	\rangle_{\R^d},  
	\end{equation}
	let 
	$(\Omega,\F,\P)$ be a probability space with a normal filtration $(\bF_t)_{t\in [0,T]}$, 
	let
	$W\colon [0,T]\times\Omega\to\R^m$ 
	be a standard $(\bF_t)_{t\in [0,T]}$-Brownian motion, 
	let 
	$\xi\colon\Omega\to [a,b]^d$ be 
	a continuous uniformly distributed 
	$\bF_0$/$\B([a,b]^d)$-measurable random variable, 
	and let 
	$\bX=(\bX_t)_{t\in [0,T]}\colon [0,T]\times\Omega\to\R^d$ 
	be an $(\bF_t)_{t\in [0,T]}$-adapted stochastic process with 
	continuous sample paths which satisfies that for every 
	$t\in [0,T]$ it holds $\P$-a.s.~that
	\begin{equation}\label{eq:SDEforXstartingAtXi_new}
	\bX_t = \xi + \int_0^t \mu(\bX_s)\,ds + \int_0^t \sigma(\bX_s)\,dW_s. 
	\end{equation}  
	Then 
	\begin{enumerate}[(i)]
		\item\label{it:varphiContinuous_new} it holds that the function
		$\varphi \colon \R^d \rightarrow \R$ is twice continuously differentiable
		with at most polynomially growing derivatives,
		\item \label{it:exuniqueSolutions} it holds that there exists a unique continuous function 
		$U\colon [a,b]^d\to\R$ such that 
		\begin{equation}
		\E\big[ | \varphi(\bX_T) - U(\xi) |^2 \big] 
		= 
		\inf_{v\in C([a,b]^d,\R)} \E\big[ | \varphi(\bX_T) - v(\xi) |^2 \big] ,
		\end{equation} 
		and
		\item \label{it:UboundaryCond}it holds for every $x\in [a,b]^d$ that $U(x)=u(T,x)$.
	\end{enumerate}
\end{proposition}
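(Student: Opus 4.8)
The plan is to reduce items~\eqref{it:exuniqueSolutions}--\eqref{it:UboundaryCond} to Proposition~\ref{proposition:minimizingProperty} by passing from the single-expectation formulation involving $\bX$ and $\xi$ to the integrated formulation involving processes started at deterministic points, and then to identify the resulting minimizer through the Feynman--Kac formula. Item~\eqref{it:varphiContinuous_new} is immediate: since $\varphi=u(0,\cdot)$ and $u\in C^{1,2}([0,T]\times\R^d,\R)$ has at most polynomially growing partial derivatives, the function $\varphi$ is twice continuously differentiable with at most polynomially growing derivatives (and, integrating along rays, $\varphi$ itself grows at most polynomially). To set up the reduction I first introduce, for every $x\in[a,b]^d$, an $(\bF_t)_{t\in[0,T]}$-adapted continuous process $X^x=(X^x_t)_{t\in[0,T]}$ solving $X^x_t=x+\int_0^t\mu(X^x_s)\,ds+\int_0^t\sigma(X^x_s)\,dW_s$; such processes exist uniquely because $\mu$ and $\sigma$ are globally Lipschitz. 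This places me exactly in the setting of Lemma~\ref{lemma:equivalence_of_expectation_representations}.

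The key device is to encode the starting point of a path through its value at time $0$. For a continuous $v\colon[a,b]^d\to\R$ I would extend $v$ to a bounded continuous $\tilde v\colon\R^d\to\R$ (for instance $\tilde v=v\circ\pi$, where $\pi\colon\R^d\to[a,b]^d$ is the coordinatewise clamping onto the box) and define $\Phi_v\colon C([0,T],\R^d)\to\R$ by $\Phi_v(w)=|\varphi(w_T)-\tilde v(w_0)|^2$. Since the evaluation maps $w\mapsto w_0$ and $w\mapsto w_T$ are continuous, $\varphi$ is continuous, and $\tilde v$ is bounded and continuous, the function $\Phi_v$ is continuous and, using the polynomial growth of $\varphi$ together with $\sup_{y\in\R^d}|\tilde v(y)|<\infty$, at most polynomially growing. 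Because $\bX_0=\xi$ and $X^x_0=x$ both lie in $[a,b]^d$ (so that $\tilde v$ agrees with $v$ there), applying item~\eqref{it:identity_of_interest} of Lemma~\ref{lemma:equivalence_of_expectation_representations} to $\Phi_v$ yields, for every continuous $v$,
\begin{equation*}
\E\big[|\varphi(\bX_T)-v(\xi)|^2\big]
=\tfrac{1}{(b-a)^d}\int_{[a,b]^d}\E\big[|\varphi(X^x_T)-v(x)|^2\big]\,dx .
\end{equation*}
Thus the objective functional in item~\eqref{it:exuniqueSolutions} and the one in \eqref{eq:existenceAndUniquenessOfMinimizer} (with $X_x=\varphi(X^x_T)$) differ only by the positive constant $(b-a)^{-d}$, and hence have exactly the same minimizers over $C([a,b]^d,\R)$.

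It remains to apply Proposition~\ref{proposition:minimizingProperty} with $X_x=\varphi(X^x_T)$. The bound $\E[|\varphi(X^x_T)|^2]<\infty$ and the continuity of $x\mapsto\E[\varphi(X^x_T)]$ are precisely items~\eqref{it:rhs_well_defined_for_every_x} and~\eqref{it:rhs_continuity_wrt_x} of Lemma~\ref{lemma:equivalence_of_expectation_representations} (applied with the continuous, polynomially growing functional $w\mapsto\varphi(w_T)$). The remaining hypothesis, the joint $(\B([a,b]^d)\otimes\F)/\B(\R)$-measurability of $(x,\omega)\mapsto\varphi(X^x_T(\omega))$, is the part I expect to require the most care, since the $X^x$ are a priori specified only separately for each $x$. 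Here I would use the Lipschitz moment estimate in item~\eqref{it:GWbound} of Lemma~\ref{lemma:equivalence_of_expectation_representations} — with the exponent $p$ chosen larger than $d$ — together with the Kolmogorov--Chentsov continuity theorem to pass to a modification for which $[a,b]^d\ni x\mapsto X^x_T(\omega)\in\R^d$ is continuous for $\P$-almost every $\omega$ (redefining $X^x_T$ on a null set changes no expectation). For this modification $\omega\mapsto X^x_T(\omega)$ is measurable for each $x$ and $x\mapsto X^x_T(\omega)$ is continuous for each $\omega$, so Lemma~\ref{lem:productMeasurable_new} furnishes the joint measurability of $(x,\omega)\mapsto X^x_T(\omega)$, and composition with the continuous $\varphi$ yields the required joint measurability of $X_x$.

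With all hypotheses verified, Proposition~\ref{proposition:minimizingProperty} produces a unique continuous $U\colon[a,b]^d\to\R$ minimizing $\int_{[a,b]^d}\E[|\varphi(X^x_T)-v(x)|^2]\,dx$, and by item~\eqref{it:minimizingProperty_prop_2} it satisfies $U(x)=\E[\varphi(X^x_T)]$ for every $x\in[a,b]^d$. By the proportionality established above this same $U$ is the unique minimizer of $v\mapsto\E[|\varphi(\bX_T)-v(\xi)|^2]$, which gives item~\eqref{it:exuniqueSolutions}. Finally, the Feynman--Kac formula \eqref{eq:feynman-kac} applied to $X^x$ gives $\E[\varphi(X^x_T)]=u(T,x)$, whence $U(x)=u(T,x)$, which is item~\eqref{it:UboundaryCond}. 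The main obstacle is the joint measurability; once a jointly-continuous-in-$x$ modification is secured via item~\eqref{it:GWbound} and Lemma~\ref{lem:productMeasurable_new}, the remainder is assembly.
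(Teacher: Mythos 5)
Your proposal is correct and follows essentially the same route as the paper's proof: reduction to Proposition~\ref{proposition:minimizingProperty} via the averaging identity of item~\eqref{it:identity_of_interest} in Lemma~\ref{lemma:equivalence_of_expectation_representations}, joint measurability of $(x,\omega)\mapsto\varphi(X^x_T(\omega))$ via Lemma~\ref{lem:productMeasurable_new}, and identification of the minimizer by the Feynman--Kac formula. The only deviations are minor: the paper postulates at the outset processes $X^x$ whose sample paths are continuous in $x$ (citing Cox et al.\ together with item~\eqref{it:GWbound} of Lemma~\ref{lemma:equivalence_of_expectation_representations}), where you reconstruct such a modification via Kolmogorov--Chentsov from the same moment bound, and your explicit clamping extension of $v$ to $\R^d$ makes rigorous a point that the paper's application of Lemma~\ref{lemma:equivalence_of_expectation_representations} (with $\Phi(z)=|\varphi(z_T)-V(z_0)|^2$ for $V$ defined only on $[a,b]^d$) leaves implicit.
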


\begin{proof}[Proof of Proposition~\ref{proposition:minimizingPropertyApplied_new}]
	Throughout this proof let
	$X^x=(X^x_t)_{t\in [0,T]}\colon [0,T]\times\Omega\to\R^d$, $x\in [a,b]^d$, 
	be $(\bF_t)_{t\in [0,T]}$-adapted stochastic processes with continuous 
	sample paths 
	\begin{enumerate}[a)]
		\item which satisfy that for every $t\in [0,T]$, $x\in [a,b]^d$ 
		it holds $\P$-a.s.~that 
		\begin{equation}
		\label{eq:SDEPropertyAppliedForx}
		X^x_t = x + \int_0^t \mu(X^x_s)\,ds + \int_0^t \sigma(X^x_s)\,dW_s
		\end{equation}
		and
		\item which satisfy that for every 
		$\omega\in\Omega$ it holds that the function
		$[a,b]^d \ni x \mapsto X^x_T(\omega) \in \R^d$ is continuous (cf., for example, Cox et al.~\cite[Theorem 3.5]{CoxJentzenHutzenthaler2013} and item~\eqref{it:GWbound} in Lemma~\ref{lemma:equivalence_of_expectation_representations}).
	\end{enumerate}
	Note that the assumption that $ \forall \, x \in \R^d \colon u(0,x) = \varphi(x) $
	and the assumption that $u\in C^{1,2}([0,T]\times\R^d,\R)$ has at most polynomially
	growing partial derivatives establish item~\eqref{it:varphiContinuous_new}.
	Next note that item \eqref{it:varphiContinuous_new} and the fact that for every  
	$ p \in (0,\infty)$, $x\in[a,b]^d$ it holds that
	\begin{equation}
		\sup_{t\in [0,T]}
		\E[\|X^x_t\|_{\R^d}^p] 
		< \infty
	\end{equation}
	assure that for every $x\in [a,b]^d$ it holds that 
	\begin{equation}
	\label{eq:minimizingPropertyApplied_1_new}
	\E\big[|\varphi(X^x_T)|^2] < \infty .
	\end{equation}
	Item~\eqref{it:varphiContinuous_new}, the 
	assumption that 
	for every $\omega \in \Omega$ 
	it holds that the function 
	$
	[a,b]^d \ni x 
	\mapsto X^x_T(\omega) 
	\in \R^d 
	$ 
	is continuous, and, e.g., Hutzenthaler et al.~\cite[Proposition 4.5]{HutzenthalerJentzenSalimova2016} hence ensure that the function
	\begin{equation}
	\label{eq:minimizingPropertyApplied_2_new}
	[a,b]^d \ni x 
	\mapsto 
	\E[ \varphi(X^x_T) ] 
	\in \R 
	\end{equation}
	is continuous. In the next step we combine the fact that
	for every 
	$ 
	x \in [a,b]^d 
	$
	it holds that the function 
	$
	\Omega \ni \omega 
	\mapsto 
	\varphi(X^x_T(\omega)) 
	\in \R 
	$
	is $\F$/$\mathcal{B}(\R)$-measurable, the 
	fact that for every 
	$ 
	\omega \in \Omega 
	$
	it holds that the function 
	$
	[a,b]^d \ni x 
	\mapsto \varphi(X^x_T(\omega)) 
	\in \R 
	$ 
	is continuous, and Lemma~\ref{lem:productMeasurable_new}
	to obtain that the function 
	\begin{equation}
	\label{eq:minimizingPropertyApplied_3_new}
	[a,b]^d \times \Omega 
	\ni 
	(x, \omega) 
	\mapsto 
	\varphi(X^x_T(\omega)) 
	\in \R 
	\end{equation}
	is $(\mathcal{B}([a,b]^d) \otimes \F)$/$\mathcal{B}(\R)$-measurable.
	Combining this, 
	\eqref{eq:minimizingPropertyApplied_1_new},  
	\eqref{eq:minimizingPropertyApplied_2_new}, and
	Proposition~\ref{proposition:minimizingProperty}
	demonstrates 
	\begin{enumerate}[A)]
		\item that there exists a unique continuous function 
		$U\colon [a,b]^d\to\R$ which satisfies that 
		\begin{equation}
		\label{eq:minimizingPropertyEqApl_new}
		\int_{[a,b]^d} \E\big[ | \varphi(X^x_T) - U(x) |^2 \big] \,dx
		= 
		\inf_{v\in C([a,b]^d,\R)} 
		\left( 
		\int_{ [a,b]^d } 
		\E\big[ | \varphi(X^x_T) - v(x) |^2 \big] \, dx
		\right)
		\end{equation}
		and
		\item that it holds for every $x\in[a,b]^d$ that
		\begin{equation}
			\label{eq:feynmkac2}
			U(x)=\E[\varphi(X^x_T)].
		\end{equation}
	\end{enumerate}
	Next note that for every continuous function $V\colon[a,b]^d\to\R$ it holds that
	\begin{equation}
		\sup_{x\in[a,b]^d}|V(x)|<\infty.
	\end{equation}
	Item \eqref{it:varphiContinuous_new} hence implies that for every continuous function $V\colon[a,b]^d\to\R$ it holds that
	\begin{equation}
		C([0,T],\R^d) \ni (z_t)_{t\in [0,T]} \mapsto |\varphi(z_T)-V(z_0)|^2 \in \R
	\end{equation}
	is an at most polynomially growing continuous function. Combining Lemma~\ref{lemma:equivalence_of_expectation_representations} (with $\Phi=(C([0,T],\R^d) \ni (z_t)_{t\in [0,T]} \mapsto |\varphi(z_T)-V(z_0)|^2 \in \R)$ for $V\in C([a,b]^d,\R^d)$ in the notation of Lemma~\ref{lemma:equivalence_of_expectation_representations}), \eqref{eq:SDEforXstartingAtXi_new}, item~\eqref{it:varphiContinuous_new}, and \eqref{eq:SDEPropertyAppliedForx} hence ensures that for every continuous function $V\colon[a,b]^d\to\R$ it holds that
	\begin{equation}
		\label{eq:nrRR}
		\E\big[|\varphi(\bX_T)-V(\xi)|^2\big]=\frac{1}{(b-a)^d}\left[\int_{[a,b]^d}\E\big[|\varphi(X^x_T)-V(x)|^2\big]\,dx\right].
	\end{equation}
	Hence, we obtain that for every continuous function $V\colon[a,b]^d\to\R$ with $\E[|\varphi(\bX_T)-V(\xi)|^2]=\inf_{v\in C([a,b]^d,\R)}\E[|\varphi(\bX_T)-v(\xi)|^2]$ it holds that
	\begin{equation}
		\begin{split}
			&\int_{[a,b]^d}\E\big[|\varphi(X^x_T)-V(x)|^2\big]\,dx\\
			&=(b-a)^d\left(\frac{1}{(b-a)^d}\left[\int_{[a,b]^d}\E\big[|\varphi(X^x_T)-V(x)|^2\big]\,dx\right]\right)\\
			&=(b-a)^d\left(\E\big[|\varphi(\bX_T)-V(\xi)|^2\big]\right)\\
			&=(b-a)^d\left(\inf_{v\in C([a,b]^d,\R)}\E\big[|\varphi(\bX_T)-v(\xi)|^2\big]\right)\\
			&=(b-a)^d\left(\inf_{v\in C([a,b]^d,\R)}\left(\frac{1}{(b-a)^d}\left[\int_{[a,b]^d}\E\big[|\varphi(X^x_T)-v(x)|^2\big]\,dx\right]\right)\right)\\
			&=\inf_{v\in C([a,b]^d,\R)}\left[\int_{[a,b]^d}\E\big[|\varphi(X^x_T)-v(x)|^2\big]\,dx\right].
		\end{split}
	\end{equation}
	Combining this with \eqref{eq:minimizingPropertyEqApl_new} proves that for every continuous function $V\colon[a,b]^d\to\R$ with $\E[|\varphi(\bX_T)-V(\xi)|^2]=\inf_{v\in C([a,b]^d,\R)}\E[|\varphi(\bX_T)-v(\xi)|^2]$ it holds that
	\begin{equation}
		\label{eq:uniqueUisV}
		U=V.
	\end{equation}
	Next observe that \eqref{eq:nrRR} and \eqref{eq:minimizingPropertyEqApl_new} demonstrate that 
	\begin{equation}
	\begin{split}
	\E\big[ | \varphi(\bX_T) - U(\xi) |^2 \big] 
	&
	= 
	\frac{1}{(b-a)^d} \left(\int_{[a,b]^d} \E\big[ | \varphi(X^x_T) - U(x) |^2 \big] \,dx \right)
	\\
	& = 
	\inf_{v\in C([a,b]^d,\R)} \bigg[ \frac{1}{(b-a)^d} \left( \int_{[a,b]^d} \E\big[ | \varphi(X^x_T) - v(x) |^2 \big]\,dx \right) \bigg]
	\\
	& = 
	\inf_{v\in C([a,b]^d,\R)} \E\big[ | \varphi(\bX_T) - v(\xi) |^2 \big]. 
	\end{split}
	\end{equation} 
	Combining this with \eqref{eq:uniqueUisV} proves item~\eqref{it:exuniqueSolutions}. 	
	Next note that \eqref{eq:differentialu},  \eqref{eq:feynmkac2}, and the Feynman-Kac formula (cf., for example, Hairer et al.~\cite[Corollary 4.17]{HairerHutzenthalerJentzen_LossOfRegularity2015}) imply that for every $x\in[a,b]^d$ it holds that $U(x)=\E[\varphi(X^x_T)]=u(T,x)$. This establishes item \eqref{it:UboundaryCond}. 
	The proof of Proposition~\ref{proposition:minimizingPropertyApplied_new} is thus completed. 
\end{proof}

In the next step we use Proposition~\ref{proposition:minimizingPropertyApplied_new} 
to obtain a minimization problem which is uniquely solved by the function 
$[a,b]^d\ni x\mapsto u(T,x)\in\R$. More specifically, let 
  $\xi\colon\Omega\to [a,b]^d$ be a continuously 
  uniformly distributed $\bF_0$/$\B([a,b]^d)$-measurable 
  random variable, 
and 
let 
  $\bX\colon [0,T]\times\Omega\to\R^d$ be an $(\bF_t)_{t\in [0,T]}$-adapted 
  stochastic process with continuous sample paths which satisfies that for 
  every $t\in [0,T]$ it holds $\P$-a.s.~that 
  \begin{equation}\label{eq:SDEforXStartingAtXiInSloppyDerivation}
   \bX_t = \xi + \int_0^t \mu(\bX_s)\,ds + \int_0^t \sigma(\bX_s)\,dW_s. 
  \end{equation}  
Proposition~\ref{proposition:minimizingPropertyApplied_new} then guarantees that 
the function $[a,b]^d\ni x\mapsto u(T,x)\in\R$ is the unique global
minimizer of the function 
\begin{equation}\label{eq:functionMinimizedByU}
 C([a,b]^d,\R) \ni v \mapsto \E\big[ | \varphi(\bX_T) - v(\xi) |^2 \big] \in \R. 
\end{equation}
In the following two subsections we derive an 
approximated minimization problem 
by discretizing the stochastic process $\bX\colon [0,T]\times\Omega\to\R^d$ 
(see Subsection~\ref{subsec:discretizationOfX} below)
and 
by employing a deep neural network approximation for the 
function $\R^d\ni x\mapsto u(T,x)\in\R$ 
(see Subsection~\ref{subsec:DNNapproximations} below). 


\subsection{Discretization of the stochastic differential equation}
\label{subsec:discretizationOfX}

In this subsection we use the Euler-Maruyama scheme 
(cf., for example, Kloeden \& Platen~\cite{KloedenPlaten1992}
and Maruyama~\cite{Maruyama_ContinuousMarkovProcessesAndStochasticEquations1955}) 
to temporally discretize 
the solution process $\bX$ of the SDE~\eqref{eq:SDEforXStartingAtXiInSloppyDerivation}. 

More specifically, let $N\in\N$, 
let $t_0,t_1,\ldots,t_N\in [0,\infty)$ be real numbers 
which satisfy that 
\begin{equation}
 0 = t_0 < t_1 < \ldots < t_N = T. 
\end{equation}
Note that \eqref{eq:SDEforXStartingAtXiInSloppyDerivation} implies that for every $n\in\{0,1,\ldots,N-1\}$ 
it holds $\P$-a.s.~that
\begin{equation}
 \bX_{t_{n+1}} = \bX_{t_n} + 
 \int_{t_n}^{t_{n+1}} \mu(\bX_s)\,ds + 
 \int_{t_n}^{t_{n+1}} \sigma(\bX_s)\,dW_s.
\end{equation}
This suggests that for sufficiently small mesh size 
$\sup_{n\in\{0,1,\ldots,N-1\}} (t_{n+1}-t_n)$ 
it holds that 
\begin{equation}\label{eq:SDEforXApproximated}
 \bX_{t_{n+1}} \approx \bX_{t_n} + 
 \mu(\bX_{t_n})\,(t_{n+1}-t_n) + 
 \sigma(\bX_{t_n})\,(W_{t_{n+1}}-W_{t_n}).
\end{equation}
Let $\X\colon\{0,1,\ldots,N\}\times\Omega\to\R^d$ 
be the stochastic 
process which satisfies for every $n\in\{0,1,\ldots,N-1\}$ that $\X_0 = \xi$ 
and 
\begin{equation}\label{eq:definitionOfXapproximation}
 \X_{n+1} = \X_n + \mu(\X_n)\,(t_{n+1}-t_n) + \sigma(\X_n)\,(W_{t_{n+1}}-W_{t_n}).
\end{equation}
Observe that \eqref{eq:SDEforXApproximated} and 
\eqref{eq:definitionOfXapproximation} 
suggest, in turn, that for every $n\in\{0,1,2,\ldots,N\}$ it holds 
that 
\begin{equation}
 \X_n \approx \bX_{t_n}
\end{equation}
(cf., for example, Theorem~\ref{thm:strong_convergence_euler_method} below 
for a strong convergence result for the Euler-Maruyama scheme). 

\begin{theorem}[Strong convergence rate for the Euler-Maruyama scheme]
\label{thm:strong_convergence_euler_method}
 Let 
  $T\in (0,\infty)$, $d\in\N$, $p\in [2,\infty)$,
 let 
  $(\Omega,\F,\P)$ be a probability space 
  with a normal filtration $(\bF_t)_{t\in [0,T]}$, 
 let 
  $W\colon [0,T]\times\Omega\to\R^d$ 
  be a standard $(\bF_t)_{t\in [0,T]}$-Brownian 
  motion, 
 let 
  $\xi\colon\Omega\to\R^d$ be a random variable which satisfies 
  that $\E[ \| \xi \|_{\R^d}^p]<\infty$, 
 let 
  $\mu\colon\R^d\to\R^d$ and $\sigma\colon\R^d\to\R^{d\times d}$ 
  be Lipschitz continuous functions, 
 let 
  $\bX\colon [0,T]\times\Omega\to\R^d$ be an $(\bF_t)_{t\in [0,T]}$-adapted 
  stochastic process with continuous sample paths which satisfies that
  for every $t\in [0,T]$ it holds $\P$-a.s.~that 
  \begin{equation}
   \bX_t = \xi + \int_0^t \mu(\bX_s)\,ds + \int_0^t \sigma(\bX_s)\,dW_s, 
  \end{equation}
 for every $N\in\N$ let 
  $t^{N}_0, t^{N}_1, \ldots, t^{N}_N \in [0,T]$ be 
  real numbers which satisfy that 
  \begin{equation}
   0 = t^{N}_0 < t^{N}_1 < \ldots < t^{N}_N = T, 
  \end{equation}
 and for every $N\in\N$ let 
  $\X^{N}\colon \{0,1,\ldots,N\} \times\Omega\to\R^d$ 
  be the stochastic process which satisfies for every $n\in\{0,1,\ldots,N-1\}$ 
  that $\X^{N}_0 = \xi_0$ and 
 \begin{equation}
  \X^{N}_{n+1} 
  = 
  \X^{N}_n 
  + 
  \mu\big(\X^{N}_n\big)(t^{N}_{n+1} - t^{N}_n) 
  + 
  \sigma\big(\X^{N}_n\big)\big(W_{t^{N}_{n+1}} - W_{t^{N}_n}\big). 
 \end{equation}
 Then there exists a real number $C\in (0,\infty)$ such that 
 for every $N\in\N$ it holds that 
 \begin{equation}
  \sup_{n \in \{0,1,\ldots,N\}} 
  \Big( \E\big[ \|\bX_{t_n^N} - \X^{N}_n \|_{\R^d}^p \big]\Big)^{\!\nicefrac{1}{p}}
  \leq 
  C
  \left[ 
    \max_{ n \in \{0,1,\ldots,N-1\} }
    | t_{n+1} - t_n |
  \right]^{\nicefrac{1}{2}}.
 \end{equation}

\end{theorem}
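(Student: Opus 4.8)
The plan is to prove the standard strong convergence rate $\nicefrac{1}{2}$ for the Euler-Maruyama scheme under global Lipschitz assumptions. This is a classical result, and I would follow the standard Gronwall-based argument. The key quantity to control is $\E[\sup_{s\in[0,t_n^N]}\|\bX_s - \hat{\X}^N_s\|^p_{\R^d}]$, where $\hat{\X}^N$ denotes the time-continuous interpolation of the discrete scheme $\X^N$, defined on the whole interval $[0,T]$ rather than only on grid points.

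First I would introduce the continuous-time interpolant $\hat{\X}^N$ of the Euler scheme, which on each subinterval $[t_n^N, t_{n+1}^N]$ is defined by freezing the coefficients at the left endpoint, so that $\hat{\X}^N_t = \xi + \int_0^t \mu(\hat{\X}^N_{\lfloor s\rfloor_N})\,ds + \int_0^t \sigma(\hat{\X}^N_{\lfloor s\rfloor_N})\,dW_s$, where $\lfloor s\rfloor_N$ denotes the largest grid point not exceeding $s$; this agrees with $\X^N_n$ at the grid points. Then I would write the difference $\bX_t - \hat{\X}^N_t$ as a sum of a drift integral and a stochastic integral, split each integrand using the Lipschitz property of $\mu$ and $\sigma$, and apply the triangle inequality, H\"older's inequality on the drift term, and a Burkholder-Davis-Gundy type inequality (as in Da Prato \& Zabczyk~\cite[Lemma 7.2]{DaPratoZabczyk2008}) on the stochastic term. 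This yields a bound of the form
\begin{equation}
\Big(\E\big[\sup_{s\in[0,t]}\|\bX_s-\hat{\X}^N_s\|^p_{\R^d}\big]\Big)^{\!\nicefrac{2}{p}} \le C_1\int_0^t \Big(\E\big[\sup_{r\in[0,s]}\|\bX_r-\hat{\X}^N_r\|^p_{\R^d}\big]\Big)^{\!\nicefrac{2}{p}}ds + C_2\,\Delta_N,
\end{equation}
where $\Delta_N = \max_{n}|t_{n+1}^N - t_n^N|$ and $C_1, C_2\in(0,\infty)$ are constants independent of $N$. The Gronwall inequality (cf.~Andersson et al.~\cite[Lemma 2.6]{Andersson2015}) then delivers $\big(\E[\sup_{s\in[0,T]}\|\bX_s-\hat{\X}^N_s\|^p_{\R^d}]\big)^{\!\nicefrac{1}{p}}\le C\sqrt{\Delta_N}$.

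The technical heart of the estimate is controlling the consistency error, namely the difference $\mu(\hat{\X}^N_{\lfloor s\rfloor_N}) - \mu(\bX_s)$ (and similarly for $\sigma$), which I would split as $[\mu(\hat{\X}^N_{\lfloor s\rfloor_N}) - \mu(\hat{\X}^N_s)] + [\mu(\hat{\X}^N_s) - \mu(\bX_s)]$. The first bracket is the genuine discretization error: by the Lipschitz property it is bounded by $L\|\hat{\X}^N_{\lfloor s\rfloor_N} - \hat{\X}^N_s\|_{\R^d}$, and the one-step increment $\hat{\X}^N_s - \hat{\X}^N_{\lfloor s\rfloor_N}$ is, by construction, a drift term of order $(s-\lfloor s\rfloor_N)\le\Delta_N$ plus a stochastic increment whose $L^p$-norm is of order $\sqrt{\Delta_N}$; establishing uniform $L^p$-moment bounds $\sup_{N}\E[\sup_{t\in[0,T]}\|\hat{\X}^N_t\|^p_{\R^d}]<\infty$ (from the linear growth of $\mu,\sigma$ and the moment bound on $\xi$, cf.~\eqref{eq:lemma_equivalence_lp_sup_numerics}) is what makes this term contribute the required factor $\sqrt{\Delta_N}$. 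I expect this moment control of the one-step increments to be the main obstacle, as it is where the rate $\nicefrac{1}{2}$ actually originates; once it is in place, the rest is a routine Gronwall closure. Since this is a well-known result, I would alternatively simply cite an established reference such as Kloeden \& Platen~\cite{KloedenPlaten1992} or Milstein \& Tretyakov~\cite{MilsteinTretyakovBook}, where the theorem is proved in full under precisely these hypotheses.
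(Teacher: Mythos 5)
The paper does not actually prove Theorem~\ref{thm:strong_convergence_euler_method}: immediately after the statement it simply remarks that the proof is well known and refers to Kloeden \& Platen~\cite{KloedenPlaten1992}, Milstein~\cite{Milstein1995}, Hofmann, M\"uller-Gronbach \& Ritter~\cite{HofmannGronbachRitter2000}, and M\"uller-Gronbach \& Ritter~\cite{GronbachRitterMinimal2008}. Your fallback option of citing the literature is therefore exactly what the paper does, and on its own it would already suffice. Your sketched argument goes beyond the paper and is sound in outline: the continuous-time interpolant with coefficients frozen at the left grid point, the decomposition of the consistency error into a one-step-increment part (of order $\sqrt{\Delta_N}$ in $L^p$ via uniform moment bounds) and a part $L\|\hat{\X}^N_s-\bX_s\|_{\R^d}$ that feeds into the Gronwall term, the Burkholder--Davis--Gundy estimate for the stochastic integral, and the Gronwall closure at the level of the exponent $\nicefrac{2}{p}$ are precisely the standard ingredients. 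It is worth noting that this is the same machinery the paper itself deploys in the proof of item~\eqref{it:GWbound} of Lemma~\ref{lemma:equivalence_of_expectation_representations} (Da Prato \& Zabczyk~\cite[Lemma 7.2]{DaPratoZabczyk2008} for the BDG-type bound and Andersson et al.~\cite[Lemma 2.6]{Andersson2015} for Gronwall), only there the perturbation is in the initial condition rather than in the time discretization; so your proof sketch is fully consistent in style and tooling with the rest of the paper, and your identification of the uniform moment control of one-step increments as the source of the rate $\nicefrac{1}{2}$ is accurate.
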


The proof of Theorem~2.4 is well-known in the literature
(cf., for instance, Kloeden \& Platen~\cite{KloedenPlaten1992},
Milstein~\cite{Milstein1995}, 
Hofmann, M{\"u}ller-Gronbach, \& Ritter~\cite{HofmannGronbachRitter2000},
M{\"u}ller-Gronbach \& Ritter~\cite{GronbachRitterMinimal2008},
and the references mentioned therein).


\subsection{Deep artificial neural network approximations}
\label{subsec:DNNapproximations}

In this subsection we employ suitable approximations for the solution
$
 \R^d\ni x\mapsto u(T,x)\in\R
$ of the PDE~\eqref{eq:kolmogorovPDE} at time $T$. 

More specifically, let $ \nu \in \N $ 
and let 
$\bU = (\bU(\theta,x))_{(\theta,x)\in\R^{\nu}\times\R^d} \colon \R^{\nu} \times \R^d \to \R$
be a continuous function. 
For every
  \emph{suitable} 
  $
    \theta\in\R^{\nu}
  $
  and every $x\in[a,b]^d$ 
  we think of 
  $\bU(\theta,x)\in\R$ 
  as an appropriate approximation
  \begin{equation}\label{eq:VThetaApproxVz}
  \bU(\theta,x) \approx u(T,x)
  \end{equation}
  of $u(T,x)$. 
We suggest to choose the function $\bU\colon\R^{\nu}\times\R^d\to\R$ 
as a deep neural network (cf., for example, Bishop~\cite{Bishop_PatternRecognition2016}). 
For instance, 
let $ \mathcal{L}_d \colon \R^d \to \R^d $ be the function 
which satisfies for every $ x = ( x_1, x_2, \dots, x_d ) \in \R^d $ that
\begin{equation}
  \mathcal{L}_d( x ) 
  =
  \left(
    \frac{\exp(x_1)}{\exp(x_1)+1},
    \frac{\exp(x_2)}{\exp(x_2)+1},
    \dots
    ,
    \frac{\exp(x_d)}{\exp(x_d)+1}
  \right)
\end{equation}
(multidimensional version of the standard logistic function),
for every 
  $ k, l \in \N $,
  $ v \in \N_0 = \{0\} \cup \N $,
  $ \theta = ( \theta_1, \dots, \theta_{ \nu } ) \in \R^{ \nu } $
with 
$
  v + l (k + 1 ) \leq \nu
$
let 
$ A^{ \theta, v }_{ k, l } \colon \R^k \to \R^l $ 
be the function which satisfies for every 
$ x = ( x_1, \dots, x_k )\in\R^k $ that
\begin{equation}
 A^{ \theta, v }_{ k, l }( x )
  =
  \left(
    \begin{array}{cccc}
      \theta_{ v + 1 }
    &
      \theta_{ v + 2 }
    &
      \dots
    &
      \theta_{ v + k }
    \\
      \theta_{ v + k + 1 }
    &
      \theta_{ v + k + 2 }
    &
      \dots
    &
      \theta_{ v + 2 k }
    \\
      \theta_{ v + 2 k + 1 }
    &
      \theta_{ v + 2 k + 2 }
    &
      \dots
    &
      \theta_{ v + 3 k }
    \\
      \vdots
    &
      \vdots
    &
      \vdots
    &
      \vdots
    \\
      \theta_{ v + ( l - 1 ) k + 1 }
    &
      \theta_{ v + ( l - 1 ) k + 2 }
    &
      \dots
    &
      \theta_{ v + l k }
    \end{array}
  \right)
  \left(
    \begin{array}{c}
      x_1
    \\
      x_2
    \\
      x_3
    \\
      \vdots 
    \\
      x_k
    \end{array}
  \right)
  +
  \left(
    \begin{array}{c}
      \theta_{ v + k l + 1 }
    \\
      \theta_{ v + k l + 2 }
    \\
      \theta_{ v + k l + 3 }
    \\
      \vdots 
    \\
      \theta_{ v + k l + l }
    \end{array}
  \right),
\end{equation}
let $ s \in \{ 3, 4, 5, 6, \ldots \} $,
assume that $ (s-1) d (d+1) + d + 1 \leq \nu $,
and let 
  $\bU\colon\R^{\nu}\times\R^d\to\R$
  be the function 
  which satisfies for every 
  $\theta\in\R^{\nu}$, 
  $x\in\R^d$
  that  
\begin{equation}\label{eq:example_net}
  \bU(\theta,x)
  =
  \big( A^{ \theta, (s-1)d(d+1) }_{ d, 1 } 
  \circ 
  \mathcal{L}_d
  \circ
  A^{ \theta, (s-2)d(d+1) }_{ d, d } 
  \circ
  \ldots
  \circ 
  \mathcal{L}_d
  \circ 
  A^{ \theta, d(d+1) }_{ d, d } 
  \circ 
  \mathcal{L}_d
  \circ 
  A^{ \theta, 0 }_{ d, d } \big)(x)
  .
\end{equation}
The function $\bU\colon \R^{\nu}\times\R^d \to \R$ in \eqref{eq:example_net}
describes an artificial neural network with 
$s+1$ layers (1 input layer with $d$ neurons, 
$s-1$ hidden layers with $d$ neurons each, and 
$1$ output layer with $d$ neurons) 
and standard logistic functions as activation functions 
(cf., for instance, Bishop~\cite{Bishop_PatternRecognition2016}).  

\subsection{Stochastic gradient descent-type minimization}
\label{subsec:sgd}

As described in Subsection~\ref{subsec:DNNapproximations} for every 
\emph{suitable} $\theta\in\R^{\nu}$ and every $x\in [a,b]^d$ 
we think of $\bU(\theta,x)\in\R$ as an appropriate approximation 
of $u(T,x)\in\R$. 
In this subsection we intend to find a \emph{suitable} 
  $\theta\in\R^{\nu}$ 
as an approximate minimizer of the function 
\begin{equation}
\label{eq:toMinimizeApprox}
 \R^{\nu} \ni \theta \mapsto 
 \E\big[| \varphi(\X_N) - \bU(\theta,\xi)|^2\big]\in\R. 
\end{equation}
To be more specific, we intend to find an approximate 
minimizer of the function in \eqref{eq:toMinimizeApprox} 
through a stochastic gradient descent-type minimization algorithm
(cf., for instance, Ruder~\cite[Section~4]{Ruder2016}, 
Jentzen et al.~\cite{JentzenKuckuckNeufeldWurstemberger2018},
and the references mentioned therein). 
For this we approximate the derivative of the function 
in \eqref{eq:toMinimizeApprox} by means of the Monte Carlo method. 

More precisely, 
let 
  $\xi^{(m)}\colon \Omega\to [a,b]^d$, $m\in\N_0$, 
  be independent continuously uniformly distributed $\bF_0$/$\B([a,b]^d)$-measurable 
  random variables, 
let 
$W^{(m)}\colon [0,T]\times\Omega \to \R^d$, $m\in\N_0$, 
be independent standard $(\bF_t)_{t\in [0,T]}$-Brownian motions, 
for every $m\in\N_0$ let
$
\X^{(m)} = (\X^{(m)}_n)_{n\in\{0,1,\ldots,N\}}
\colon 
\{0,1,\ldots,N\} \times \Omega \to \R^d
$ 
be the stochastic process which satisfies for 
every $n\in\{0,1,\ldots,N-1\}$ that 
$\X^{(m)}_0 = \xi^{(m)}$ and
\begin{equation}
 \X^{(m)}_{n+1} = \X^{(m)}_n 
 + \mu(\X^{(m)}_n)\,(t_{n+1}-t_n) 
 + \sigma(\X^{(m)}_n)\,(W^{(m)}_{t_{n+1}}-W^{(m)}_{t_n}), 
\end{equation}
let $\gamma \in (0,\infty)$,
and let 
$\Theta\colon\N_0\times\Omega\to\R^{\nu}$ 
be a stochastic process which satisfies for 
every $m\in\N_0$ that 
\begin{equation}
\label{eq:plainGradientDescent}
 \Theta_{m+1} 
 = 
 \Theta_m 
 -
 2\gamma 
 \cdot 
 \big(\bU(\Theta_m, \xi^{(m)})-\varphi(\X^{(m)}_N)\big)
 \cdot 
 (\nabla_{\theta}\bU)(\Theta_m,\xi^{(m)}). 
\end{equation}
Under appropriate hypotheses we think 
for every sufficiently large $ m \in \N $
of the random variable $ \Theta_m \colon \Omega \rightarrow \R^{\nu} $
as a suitable approximation of a local minimum point of the 
function~\eqref{eq:toMinimizeApprox} and we think
for every sufficiently large $ m \in \N $
of the random function
$ [a,b]^d \ni x \mapsto \bU(\Theta_n, x) \in \R $
as a suitable approximation of the function
$ [a,b]^d \ni x \mapsto u(T,x) \in \R $.

\subsection{Description of the algorithm in a special case}
\label{subsec:desc_algo}
In this subsection we give a description of the proposed 
approximation method in a special case, that is, 
we describe the proposed approximation method in the 
specific case where a particular neural network approximation 
is chosen and where the plain-vanilla stochastic gradient 
descent method with a constant learning rate is the employed stochastic
minimization algorithm (cf.\ \eqref{eq:plainGradientDescent} above). 
For a more general description of the proposed approximation 
method we refer the reader to Subsection~\ref{subsec:generalOptDescription} below. 

\begin{algo}
	\label{algo:special}
Let 
  $T,\gamma\in (0,\infty)$, 
  $a\in\R$, $b\in (a,\infty)$, 
  $d,N\in\N$,
  $s\in\{3,4,5,\ldots\}$,
let 
  $\nu = sd(d+1)$,
let 
  $t_0,t_1,\ldots,t_N\in [0,T]$ 
 be real numbers with 
 \begin{equation}
  0 = t_0 < t_1 < \ldots < t_N = T,
 \end{equation}
let 
 $
  \mu \colon \R^d\to\R^d
 $
 and
 $
  \sigma \colon \R^d\to\R^{d\times d}
 $
 be continuous functions, 
let 
 $(\Omega,\F,\P,(\bF_t)_{t\in [0,T]})$ 
 be a filtered probability space, 
let  
  $\xi^{(m)}\colon\Omega\to[a,b]^d$, 
  $m\in\N_0$,
  be independent continuously uniformly distributed 
  $\bF_0$/$\B([a,b]^d)$-measurable random variables,
let
  $W^{(m)}\colon [0,T]\times\Omega\to\R^d$, 
  $m\in\N_0$, be i.i.d.~standard
  $(\bF_t)_{t\in [0,T]}$-Brownian motions, 
for every 
  $m\in\N_0$ 
  let 
  $\X^{(m)}\colon \{0,1,\ldots,N\}\times\Omega\to\R^d$ 
  be the stochastic process which satisfies for every $n\in\{0,1,\ldots,N-1\}$ 
  that 
  $\X^{(m)}_0 = \xi^{(m)}$ and 
  \begin{equation}
   \X^{(m)}_{n+1} 
   = 
   \X^{(m)}_n 
   + 
   \mu(\X^{(m)}_n)\,(t_{n+1}-t_{n})
   + 
   \sigma(\X^{(m)}_n)\,(W^{(m)}_{t_{n+1}}-W^{(m)}_{t_{n}}), 
  \end{equation}  
let $ \mathcal{L}_d \colon \R^d \to \R^d $ be the function 
which satisfies for every $ x = ( x_1, x_2, \dots, x_d ) \in \R^d $ that
\begin{equation}
\label{eq:activation}
  \mathcal{L}_d( x ) 
  =
  \left(
    \frac{\exp(x_1)}{\exp(x_1)+1},
    \frac{\exp(x_2)}{\exp(x_2)+1},
    \dots
    ,
    \frac{\exp(x_d)}{\exp(x_d)+1}
  \right)
  ,
\end{equation}
for every 
  $ k, l \in \N $,
  $ v \in \N_0 = \{0\} \cup \N $,
  $ \theta = ( \theta_1, \dots, \theta_{ \nu } ) \in \R^{ \nu } $
with 
$
  v + l (k + 1 ) \leq \nu
$
let 
$ A^{ \theta, v }_{ k, l } \colon \R^k \to \R^l $ 
be the function which satisfies for every 
$ x = ( x_1, \dots, x_k )\in\R^k $ that
\begin{equation}
 A^{ \theta, v }_{ k, l }( x )
  =
  \left(
    \begin{array}{cccc}
      \theta_{ v + 1 }
    &
      \theta_{ v + 2 }
    &
      \dots
    &
      \theta_{ v + k }
    \\
      \theta_{ v + k + 1 }
    &
      \theta_{ v + k + 2 }
    &
      \dots
    &
      \theta_{ v + 2 k }
    \\
      \theta_{ v + 2 k + 1 }
    &
      \theta_{ v + 2 k + 2 }
    &
      \dots
    &
      \theta_{ v + 3 k }
    \\
      \vdots
    &
      \vdots
    &
      \vdots
    &
      \vdots
    \\
      \theta_{ v + ( l - 1 ) k + 1 }
    &
      \theta_{ v + ( l - 1 ) k + 2 }
    &
      \dots
    &
      \theta_{ v + l k }
    \end{array}
  \right)
  \left(
    \begin{array}{c}
      x_1
    \\
      x_2
    \\
      x_3
    \\
      \vdots 
    \\
      x_k
    \end{array}
  \right)
  +
  \left(
    \begin{array}{c}
      \theta_{ v + k l + 1 }
    \\
      \theta_{ v + k l + 2 }
    \\
      \theta_{ v + k l + 3 }
    \\
      \vdots 
    \\
      \theta_{ v + k l + l }
    \end{array}
  \right),
\end{equation}
let 
  $\bU\colon\R^{\nu}\times\R^d\to\R$
  be the function 
  which satisfies for every 
  $\theta\in\R^{\nu}$, 
  $x\in\R^d$
  that  
\begin{equation}
\label{eq:neural_network_for_a_generalCase}
  \bU(\theta,x)
  =
  \big( A^{ \theta, (s-1)d(d+1) }_{ d, 1 } 
  \circ 
  \mathcal{L}_d
  \circ
  A^{ \theta, (s-2)d(d+1) }_{ d, d } 
  \circ
  \ldots
  \circ 
  \mathcal{L}_d
  \circ 
  A^{ \theta, d(d+1) }_{ d, d } 
  \circ 
  \mathcal{L}_d
  \circ 
  A^{ \theta, 0 }_{ d, d } \big)(x)
  ,
\end{equation}
and let 
  $\Theta\colon \N_0\times\Omega\to\R^{\nu}$ 
  be a stochastic process which satisfies for 
  every $m\in\N_0$ 
  that 
  \begin{equation}
   \Theta_{m+1} 
   = 
   \Theta_m 
   - 
   2\gamma \cdot 
   \big(\bU(\Theta_m,\xi^{(m)})-\varphi(\X_N^{(m)})\big) \cdot (\nabla_{\theta} \bU)(\Theta_m,\xi^{(m)})
  \end{equation}
\end{algo}
Under appropriate hypotheses we think for every
sufficiently large $ m \in \N $ and every $ x \in [a,b]^d $
of the random variable $\bU(\Theta_m,x) \colon \Omega \rightarrow \R $ in Framework~\ref{algo:special} 
as a suitable approximation $\bU(\Theta_m,x)\approx u(T,x)$ 
of $u(T,x)\in\R$ where $u=u(t,x)_{(t,x)\in[0,T]\times\R^d}\in C^{1,2}([0,T]\times\R^d,\R)$ is a function with at most polynomially growing partial derivatives which satisfies for every $t\in[0,T]$, $x\in \R^d$ that $u(0,x)=\varphi(x)$ and
\begin{equation}
\tfrac{\partial u}{\partial t}(t,x) 
= 
\tfrac12 \operatorname{Trace}_{\R^d}\!\big(
\sigma(x)[\sigma(x)]^{*}(\operatorname{Hess}_x u)(t,x)\big)
+ 
\langle 
\mu(x),(\nabla_x u)(t,x)
\rangle_{\R^d} 
\end{equation}
(cf.~\eqref{eq:kolmogorovPDE} above).

\subsection{Description of the algorithm in the general case}
\label{subsec:generalOptDescription}

In this subsection we provide a general framework 
which covers the approximation method derived in 
Subsections 
\ref{subsec:kolmogorovEq}--\ref{subsec:sgd} above and 
which allows, in addition, to incorporate other 
minimization algorithms (cf., for example, 
Kingma \& Ba~\cite{KingmaBa2015}, Ruder~\cite{Ruder2016},
E et al.~\cite{EHanJentzen2017a}, Han et al.~\cite{EHanJentzen2017b}, 
and Beck et al.~\cite{BeckEJentzen2017}) than just the plain vanilla 
stochastic gradient descent method. The proposed approximation 
algorithm is an extension of the approximation algorithm 
in E et al.~\cite{EHanJentzen2017a},
Han et al.~\cite{EHanJentzen2017b}, and Beck et al.~\cite{BeckEJentzen2017} 
in the special case of linear 
Kolmogorov partial differential equations. 
\begin{algo}
\label{algo:general_algorithm}
Let 
  $T \in (0,\infty)$,  
  $N, d, \varrho, \nu, \varsigma \in \N$, 
let
  $H\colon [0,T]^2\times\R^d\times\R^d\to\R^d$, $\varphi\colon \R^d\to\R$ 
  be functions,
let 
$ 
  ( \Omega, \F, \P, ( \bF_t )_{ t \in [0,T] } ) 
$ 
be a filtered probability space, 
let 
  $
    W^{m,j} \colon [0,T] \times \Omega \to \R^d 
  $, 
  $
    m\in\N_0
  $, 
  $
    j\in \N
  $,
  be independent standard $ ( \bF_t )_{ t \in [0,T] } $-Brownian motions on 
  $(\Omega,\F,\P)$, 
  let 
  $
    \xi^{m,j}\colon\Omega\to\R^d
  $, 
  $
    m \in \N_0 
  $, 
  $ j \in \N $,
  be i.i.d.\ $ \bF_0 $/$ \B(\R^d) $-measurable random variables,
let
  $t_0, t_1, \ldots, t_N\in [0,T]$ be 
  real numbers with 
  $0 = t_0 < t_1 < \ldots < t_N = T$, 
for every 
  $\theta\in\R^{\nu}$, 
  $j\in\N$,
  ${\bf s}\in\R^{\varsigma}$
  let
  $ \bU^{\theta,j,{\bf s}}\colon\R^d \to\R $ 
  be a function, 
for every 
  $ m \in \N_0 $, 
  $ j \in \N $ 
  let 
  $\X^{m,j} = (\X^{m,j}_n)_{n\in\{0,1,\ldots,N\}}\colon \{0,1,\ldots,N\}\times\Omega\to\R^d$ 
  be a stochastic process which satisfies for every 
  $n\in\{0,1,\ldots,N-1\}$ that 
  $\X^{m,j}_0 = \xi^{ m, j } $ and 
  \begin{equation}\label{eq:FormalXapprox}
   \X^{m,j}_{n+1} 
   = 
   H(t_n,t_{n+1},\X^{m,j}_n,W^{m,j}_{t_{n+1}}-W^{m,j}_{t_{n}}), 
  \end{equation}  
let $(J_m)_{m\in\N_0}\subseteq\N$ be a sequence, 
for every 
  $m\in\N_0$, 
  ${\bf s}\in\R^{\varsigma}$ 
  let 
  $\phi^{m,{\bf s}}\colon\R^{\nu}\times\Omega\to\R$ be 
  the function which satisfies for every 
  $(\theta,\omega)\in\R^{\nu}\times\Omega$ that 
  \begin{equation}
   \phi^{m,{\bf s}}(\theta,\omega) 
   = 
   \frac{1}{J_m}\sum_{j=1}^{J_m}
   \left[ 
    \bU^{\theta,j,{\bf s}}\big(\xi^{m,j}(\omega))
    - 
    \varphi\big(\X^{m,j}_N(\omega)\big) 
   \right]^2,  
  \end{equation}  
for every 
  $m\in\N_0$, 
  ${\bf s}\in\R^{\varsigma}$ 
  let 
  $\Psi^{m,{\bf s}}\colon\R^{\nu}\times\Omega\to\R^{\nu}$ be a function 
  which satisfies for every 
  $\omega\in\Omega$,  
  $\theta\in\{\eta\in\R^{\nu}\colon \phi^{m,{\bf s}}(\cdot,\omega)\colon\R^{\nu}\to\R~\text{is differentiable at}~\eta\}$ 
  that
  \begin{align}
   \Psi^{m,{\bf s}}(\theta,\omega) = (\nabla_{\theta}\phi^{m,{\bf s}})(\theta,\omega),
  \end{align}
let 
  $
    \S
      \colon
    \R^{\varsigma}
      \times
    \R^{\nu}
      \times
    (\R^d)^{\N}\to\R^{\varsigma}
   $ 
  be a function, 
for every 
  $m\in\N_0$
  let $\Phi_m\colon\R^{\varrho}\to\R^{\nu}$ 
  and $\Psi_m\colon\R^{\varrho}\times\R^{\nu}\to\R^{\varrho}$
  be functions, 
let
  $\Theta\colon\N_0\times\Omega\to\R^{\nu}$, 
  $\bS\colon\N_0\times\Omega\to\R^{\varsigma}$, 
  and 
  $\Xi\colon\N_0\times\Omega\to\R^{\varrho}$ 
  be stochastic processes 
  which satisfy for every $m\in\N_0$ that 
  \begin{equation}\label{eq:general_batch_normalization} 
   \bS_{m+1} = \S\bigl(\bS_m, \Theta_{m}, 
   (\X_N^{m,i})_{i\in\N}\bigr),  
   \qquad 
   \Xi_{m+1} = \Psi_{m}(\Xi_{m},\Psi^{m,\bS_{m+1}}(\Theta_m)), 
  \end{equation}
  \begin{equation}
   \text{and}
   \qquad
   \Theta_{m+1} = \Theta_{m} - \Phi_{m}(\Xi_{m+1}) 
   \label{eq:general_gradient_step}. 
  \end{equation}
\end{algo}
Under appropriate hypotheses we think for every
sufficiently large $ m \in \N $ and every $ x \in [a,b]^d $
of the random variable $\bU^{\Theta_m,1,\bS_m}(x) \colon \Omega \rightarrow \R $ in Framework~\ref{algo:general_algorithm} 
as a suitable approximation $\bU^{\Theta_m,1,\bS_m}(x)\approx u(T,x)$ 
of $u(T,x)\in\R$ where $u=u(t,x)_{(t,x)\in[0,T]\times\R^d}\in C^{1,2}([0,T]\times\R^d,\R)$ is a function with at most polynomially growing partial derivatives which satisfies for every $t\in[0,T]$, $x\in \R^d$ that $u(0,x)=\varphi(x)$ and
\begin{equation}
\tfrac{\partial u}{\partial t}(t,x) 
= 
\tfrac12 \operatorname{Trace}_{\R^d}\!\big(
\sigma(x)[\sigma(x)]^{*}(\operatorname{Hess}_x u)(t,x)\big)
+ 
\langle 
\mu(x),(\nabla_x u)(t,x)
\rangle_{\R^d} ,
\end{equation}where $\mu\colon \R^d\to\R^d$ and $\sigma\colon \R^d\to\R^{d\times d}$ are sufficiently regular functions 
(cf.~\eqref{eq:kolmogorovPDE} above).

\section{Examples}
\label{sec:examples}

In this section we test the proposed approximation algorithm (see Section~\ref{sec:derivationalgo} above) 
in the case of several examples of SDEs and Kolmogorov PDEs, respectively. In particular, in this section we apply the proposed approximation algorithm to the heat equation (cf.\ Subsection~\ref{sec:paraboliceq} below), to  independent geometric Brownian motions (cf.\ Subsection~\ref{sec:geometric} below), to the Black-Scholes model (cf.\ Subsection~\ref{sec:black_scholes} below), to stochastic Lorenz equations (cf.\ Subsection~\ref{sec:lorenz} below), and to the Heston model (cf.\ Subsection~\ref{sec:heston} below). In the case of each of the examples below we employ the general approximation algorithm in Framework~\ref{algo:general_algorithm} above 
  in conjunction with the Adam optimizer (cf.\ Kingma \& Ba~\cite{KingmaBa2015}) with mini-batches of size 8192 in each iteration step (see Subsection~\ref{sec:adam} below for a precise description). 
  Moreover, we employ a fully-connected feedforward neural network with one input layer, two hidden layers, and one one-dimensional output layer in our implementations in the case of each of these examples. 
  We also use batch normalization (cf.\ Ioffe \& Szegedy~\cite{IoffeSzegedy}) just before the first linear transformation, just before each of the two nonlinear activation functions in front of the hidden layers as well as just after the last linear transformation.
  For the two nonlinear activation functions we employ the multidimensional version of the function $\R\ni x\mapsto \tanh(x)\in (-1,1)$. All weights in the neural network are initialized by means of the Xavier initialization (cf.\ Glorot \& Bengio~\cite{GlorotBengio10}). 
All computations were performed in single precision (float32) on a NVIDIA GeForce GTX 1080 GPU with 1974 MHz core clock and 8 GB GDDR5X memory with 1809.5 MHz clock rate. The underlying system consisted of an
Intel Core i7-6800K CPU with 64 GB DDR4-2133 memory running Tensorflow 1.5 on Ubuntu 16.04.

\subsection{Setting}
\label{sec:adam}
\begin{framework}
	\label{framework:adam}
	Assume Framework~\ref{algo:general_algorithm}, let $\varepsilon=10^{-8}$, $\beta_1=\frac{9}{10}$, $\beta_1=\frac{999}{1000}$, $(\gamma_m)_{m\in\N_0}\subseteq (0,\infty)$, let  $\operatorname{Pow}_r\colon \R^\nu \to \R^\nu$, $r\in(0,\infty)$, be the functions which satisfy for every $r\in(0,\infty)$, $x=(x_1,\ldots,x_\nu)\in\R^\nu$ that
	\begin{equation}
		\operatorname{Pow}_r(x)=(|x_1|^r, \ldots, |x_\nu|^r),
	\end{equation}
	assume for every $m\in\N_0$, $i\in\{0,1,\ldots,N\}$ that $J_m=8192$, $t_i=\frac{iT}{N}$, $\varrho=2\nu$, $ T = 1$, $\gamma_m =10^{-3}\mathbbm{1}_{[0,250000]}(m)+10^{-4}\mathbbm{1}_{(250000,500000]}(m)+10^{-5}\mathbbm{1}_{(500000,\infty)}(m)$, assume for every $m\in\N_0$, $x=(x_1,\ldots,x_\nu)$, $y=(y_1,\ldots,y_\nu)$, $\eta=(\eta_1,\ldots,\eta_\nu)\in\R^\nu$ that
	\begin{equation}\label{eq:Adam1}
		\Psi_m(x,y,\eta) = (\beta_1 x + (1-\beta_1)\eta, \beta_2 y + (1-\beta_2)\operatorname{Pow}_2(\eta))
	\end{equation}
	and
	\begin{equation}\label{eq:Adam2}
		\psi_m(x,y) = \left(\left[\sqrt{\tfrac{|y_1|}{1-(\beta_2)^m}}+\varepsilon\right]^{-1}\frac{\gamma_mx_1}{1-(\beta_1)^m},\ldots,\left[\sqrt{\tfrac{|y_\nu|}{1-(\beta_2)^m}}+\varepsilon\right]^{-1}\frac{\gamma_mx_\nu}{1-(\beta_1)^m}\right).
	\end{equation}
\end{framework}
	Equations~\eqref{eq:Adam1} and \eqref{eq:Adam2} in Framework~\ref{framework:adam} describe the Adam optimizer (cf.\ Kingma \& Ba~\cite{KingmaBa2015}, e.g., E et al.~\cite[(32)--(33) in Section 4.2 and (90)--(91) in Section 5.2]{EHanJentzen2017b}, and line 84 in {\sc{Python}} code \ref{code:common} in Section~\ref{sec:source_code} below). 

\subsection{Heat equation}
\label{sec:paraboliceq}
In this subsection we apply the proposed approximation algorithm 
to the heat equation (see \eqref{eq:example_constant_coeffs_kolmogorovPDE} below).

Assume 
  Framework~\ref{algo:general_algorithm}, assume for every 
  $s,t\in [0,T]$, 
  $x,w\in\R^d$, 
  $m\in\N_0$
  that 
  $ N = 1$, 
  $ d = 100$, 
  $ \nu = d(2d)+(2d)^2+2d= 2d(3d+1)$,  
  $ \varphi(x) = \|x\|_{\R^d}^2 $, 
  $ H(s,t,x,w) = x + \sqrt{2}\operatorname{Id}_{\R^d}w $, 
  assume that
  $ 
  \xi^{0,1}\colon\Omega\to\R^d
  $
  is continuous uniformly distributed on $[0,1]^d$,
and let 
  $u = (u(t,x))_{(t,x)\in [0,T]\times\R^d}
  \in C^{1,2}([0,T]\times\R^d,\R)$
  be an at most polynomially growing function which satisfies 
  for every $t\in [0,T]$, $x\in\R^d$ that 
  $u(0,x) = \varphi(x)$ and 
  \begin{equation}\label{eq:example_constant_coeffs_kolmogorovPDE}
   (\tfrac{\partial u}{\partial t})(t,x) 
   = 
 (\Delta_x u)(t,x). 
  \end{equation}
Combining, 
e.g., Lemma~\ref{lemma:example_constant_coeffs_exact_solution} below with, 
e.g., Hairer et al.~\cite[Corollary 4.17 and Remark 4.1]
  {HairerHutzenthalerJentzen_LossOfRegularity2015} shows that for every 
$t\in [0,T]$, $x\in\R^d$ it holds that
\begin{equation}
\label{eq:constant_coeffs_concrete_PDE}
 u(t,x) = \|x\|_{\R^d}^2 + t\,d. 
\end{equation}
Table~\ref{tab:constant_coeffs_linfty_l1_l2} approximately presents  
the relative $L^1(\lambda_{[0,1]^d};\R)$-approximation error associated to 
$(\bU^{\Theta_m,1,\bS_m}(x))_{x\in[0,1]^d}$ (see \eqref{eq:relL1fehler} below),
the relative $L^2(\lambda_{[0,1]^d};\R)$-approximation error associated to 
$(\bU^{\Theta_m,1,\bS_m}(x))_{x\in[0,1]^d}$ (see \eqref{eq:relL2fehler} below), 
and 
the relative $L^{\infty}(\lambda_{[0,1]^d};\R)$-approximation error associated to 
$(\bU^{\Theta_m,1,\bS_m}(x))_{x\in[0,1]^d}$ (see \eqref{eq:relLinftyfehler} below)  against 
$ m \in \{0,10000, \allowbreak50000,\allowbreak 100000, 150000, 200000, 500000, \allowbreak 750000\}$ (cf.\ \textsc{Python} code~\ref{code:paraboliceq} in Subsection~\ref{sec:code_paraboliceq} below). 
Figure~\ref{fig:constant_coeffs} approximately depicts 
the relative $L^1(\lambda_{[0,1]^d};\R)$-approximation error associated to 
$(\bU^{\Theta_m,1,\bS_m}(x))_{x\in[0,1]^d}$ (see \eqref{eq:relL1fehler} below),
the relative $L^2(\lambda_{[0,1]^d};\R)$-approximation error associated to 
$(\bU^{\Theta_m,1,\bS_m}(x))_{x\in[0,1]^d}$ (see \eqref{eq:relL2fehler} below), 
and 
the relative $L^{\infty}(\lambda_{[0,1]^d};\R)$-approximation error associated to 
$(\bU^{\Theta_m,1,\bS_m}(x))_{x\in[0,1]^d}$ (see \eqref{eq:relLinftyfehler} below)  against 
$ m \in \{0,100, 200, 300,\ldots,\allowbreak 299800, 299900, 300000\}$ (cf.\ \textsc{Python} code~\ref{code:paraboliceq} in Subsection~\ref{sec:code_paraboliceq} below). 
In our numerical simulations for  Table~\ref{tab:constant_coeffs_linfty_l1_l2} and Figure \ref{fig:constant_coeffs} we calculated the exact solution 
of the PDE~\eqref{eq:example_constant_coeffs_kolmogorovPDE}
by means of Lemma~\ref{lemma:example_constant_coeffs_exact_solution} 
below (see \eqref{eq:constant_coeffs_concrete_PDE} above), we approximately calculated the relative $L^1(\lambda_{[0,1]^d};\R)$-approximation error
\begin{equation}
\label{eq:relL1fehler}
\int_{[0,1]^d} \left|\frac{u(T,x) - \bU^{\Theta_m,1,\bS_m}(x)}{u(T,x)}\right|dx
\end{equation}
for $ m \in \{0,10000, 50000, 100000, 150000, 200000, 500000, 750000\}$ by means of Monte Carlo approximations with 10240000 samples, we approximately calculated the relative $L^2(\lambda_{[0,1]^d};\R)$-approximation error
\begin{equation}
\label{eq:relL2fehler}
\sqrt{\int_{[0,1]^d} \left|\frac{u(T,x) - \bU^{\Theta_m,1,\bS_m}(x)}{u(T,x)}\right|^2dx}
\end{equation}
for $ m \in \{0,10000, 50000, 100000, 150000, 200000, 500000, 750000\}$ by means of Monte Carlo approximations with 10240000 samples, and we approximately calculated the relative $L^\infty(\lambda_{[0,1]^d};\R)$-approximation error
\begin{equation}
\label{eq:relLinftyfehler}
 \sup_{x\in [0,1]^d} \left|\frac{u(T,x) -  \bU^{\Theta_m,1,\bS_m}(x)}{u(T,x)}\right|
\end{equation}
for $ m \in \{0,10000, 50000, 100000, 150000, 200000, 500000, 750000\}$ by means of Monte Carlo approximations with 10240000 samples (see Lemma~\ref{lemma:MonteCarloSupEstimate} below).
Table~\ref{tab:constant_coeffs_linfty_l1_l2_2} 
approximately presents
the relative $L^1(\P;L^1(\lambda_{[0,1]^d};\R))$-approximation error associated to 
$(\bU^{\Theta_m,1,\bS_m(x)})_{x\in[0,1]^d}$ (see \eqref{eq:relL1fehler2} below),
the relative $L^2(\P;L^2(\lambda_{[0,1]^d};\R))$-approximation error associated to 
$(\bU^{\Theta_m,1,\bS_m(x)})_{x\in[0,1]^d}$ (see \eqref{eq:relL2fehler2} below),
and 
the relative $L^2(\P;L^\infty(\lambda_{[0,1]^d};\R))$-approximation error associated to 
$(\bU^{\Theta_m,1,\bS_m(x)})_{x\in[0,1]^d}$ (see \eqref{eq:relLinftyfehler2} below), against 
$ m \in \{0, 10000, 50000,\allowbreak 100000,\allowbreak  150000, 200000,\allowbreak 500000,\allowbreak 750000\}$ (cf.\ \textsc{Python} code~\ref{code:paraboliceq} in Subsection~\ref{sec:code_paraboliceq} below). 
In our numerical simulations for Table~\ref{tab:constant_coeffs_linfty_l1_l2_2} we calculated the exact solution 
of the PDE~\eqref{eq:example_constant_coeffs_kolmogorovPDE}
by means of Lemma~\ref{lemma:example_constant_coeffs_exact_solution} 
below (see \eqref{eq:constant_coeffs_concrete_PDE} above), we approximately calculated the relative $L^1(\P;L^1(\lambda_{[0,1]^d};\R))$-approximation error
\begin{equation}
\label{eq:relL1fehler2}
\E\Bigg[\int_{[0,1]^d} \left|\frac{u(T,x) - \bU^{\Theta_m,1,\bS_m}(x)}{u(T,x)}\right|dx\Bigg]
\end{equation}
for $ m \in \{0, 10000, 50000, 100000, 150000, 200000, 500000, 750000\}$ by means of Monte Carlo approximations with 10240000 samples for the Lebesgue integral and 5 samples for the expectation, we approximately calculated the relative $L^2(\P;L^2(\lambda_{[0,1]^d};\R))$-approximation error
\begin{equation}
\label{eq:relL2fehler2}
\left(\E\!\left[\int_{[0,1]^d} \left|\frac{u(T,x) - \bU^{\Theta_m,1,\bS_m}(x)}{u(T,x)}\right|^2dx\right]\right)^{\!1/2}
\end{equation}
for $ m \in \{0, 10000, 50000, 100000, 150000, 200000, 500000, 750000\}$ by means of Monte Carlo approximations with 10240000 samples for the Lebesgue integral and 5 samples for the expectation, and we approximately calculated the relative $L^2(\P;L^\infty(\lambda_{[0,1]^d};\R))$-approximation error
\begin{equation}
\label{eq:relLinftyfehler2}
\left(\E\!\left[\sup_{x\in [0,1]^d} \left|\frac{u(T,x) - \bU^{\Theta_m,1,\bS_m}(x)}{u(T,x)}\right|^2\right]\right)^{\!1/2}
\end{equation}
for $ m \in \{0, 10000, 50000, 100000, 150000, 200000, 500000, 750000\}$ by means of Monte Carlo approximations with 10240000  samples for the supremum (see Lemma~\ref{lemma:MonteCarloSupEstimate} below) and 5 samples for the expectation. The following elementary result, Lemma~\ref{lemma:example_constant_coeffs_exact_solution} below, specifies the explicit solution of the PDE~\eqref{eq:example_constant_coeffs_kolmogorovPDE} above (cf.\ \eqref{eq:constant_coeffs_concrete_PDE} above). For completeness we also provide here a proof for  Lemma~\ref{lemma:example_constant_coeffs_exact_solution}.
\begin{table}[H]	\begin{center}
		\begin{tabular}{|c|c|c|c|c|}
			\hline
			{\begin{tabular}{@{}c@{}}Number \\ of steps\end{tabular}}  & {\begin{tabular}{@{}c@{}}Relative \\ $L^1(\lambda_{[0,1]^d};\R)$-error\end{tabular}} &  {\begin{tabular}{@{}c@{}}Relative \\ $L^2(\lambda_{[0,1]^d};\R)$-error\end{tabular}} & {\begin{tabular}{@{}c@{}}Relative \\ $L^\infty(\lambda_{[0,1]^d};\R)$-error\end{tabular}} & {\begin{tabular}{@{}c@{}}Runtime \\ in seconds\end{tabular}}\\
			\hline
			0  & 0.998253 &  0.998254 &  1.003524 & 0.5\\
			\hline
			10000  & 0.957464 &  0.957536 &  0.993083 &  44.6\\
			\hline
			50000  &  0.786743 &  0.786806 &  0.828184 &   220.8 \\
			\hline
			100000 &  0.574013 &  0.574060 &  0.605283 & 440.8\\
			\hline
			150000 &   0.361564 &  0.361594 &  0.384105 & 661.0 \\
			\hline
			200000 &  0.150346  &  0.150362 &   0.164140 & 880.8  \\
			\hline
			500000 &  0.000882 &  0.001112 &  0.007360 & 2200.7 \\
			\hline
			750000 &  0.000822 &  0.001036 & 0.007423 & 3300.6 \\
			\hline
		\end{tabular}
		\caption{Approximative presentations of the relative approximation errors in \eqref{eq:relL1fehler}--\eqref{eq:relLinftyfehler} for the heat equation in \eqref{eq:example_constant_coeffs_kolmogorovPDE}.}
		\label{tab:constant_coeffs_linfty_l1_l2}
	\end{center}
\end{table}
\begin{figure}
	\centering
	\includegraphics[scale=0.65]{./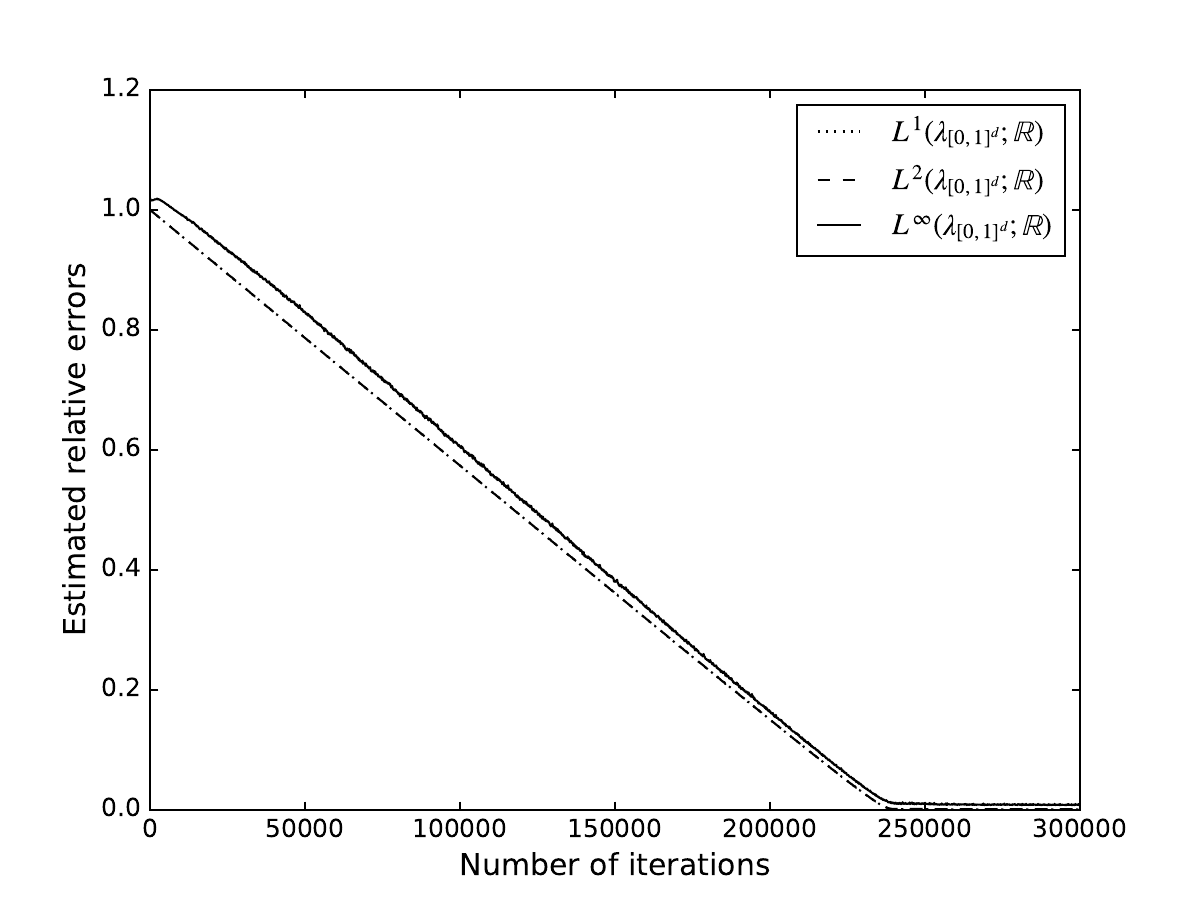}
	\caption{Approximative plots of the relative approximation errors in \eqref{eq:relL1fehler}--\eqref{eq:relLinftyfehler} for the heat equation in \eqref{eq:example_constant_coeffs_kolmogorovPDE}.}
	\label{fig:constant_coeffs}
\end{figure}
\begin{table}[H]	\begin{center}
	\begin{tabular}{|c|c|c|c|c|}
		\hline
		{\begin{tabular}{@{}c@{}}Number \\ of \\steps\end{tabular}}  & {\begin{tabular}{@{}c@{}}Relative \\ $L^1(\P;L^1(\lambda_{[0,1]^d};\R))$-\\error\end{tabular}} &  {\begin{tabular}{@{}c@{}}Relative \\ $L^2(\P;L^2(\lambda_{[0,1]^d};\R))$-\\error\end{tabular}} & {\begin{tabular}{@{}c@{}}Relative \\ $L^2(\P;L^\infty(\lambda_{[0,1]^d};\R))$-\\error\end{tabular}} & {\begin{tabular}{@{}c@{}}Mean\\  runtime\\ in\\ seconds\end{tabular}}\\
		\hline
		0  &  1.000310 &  1.000311 &  1.005674 &   0.6\\
		\hline
		10000  & 0.957481 &  0.957554 &  0.993097 &  44.7\\
		\hline
		50000  & 0.786628 &  0.786690 &  0.828816 &   220.4 \\
		\hline
		100000 & 0.573867 &  0.573914 &  0.605587 & 440.5\\
		\hline
		150000 &  0.361338 &  0.361369 &  0.382967  & 660.8 \\
		\hline
		200000 &  0.001387 &  0.001741 &  0.010896 & 880.9  \\
		\hline
		500000 &   0.000883 &  0.001112 &  0.008017  & 2201.0 \\
		\hline
		750000 & 0.000822 &  0.001038 &  0.007547 & 3300.4 \\
		\hline
	\end{tabular}
		\caption{Approximative presentations of the relative approximation errors in \eqref{eq:relL1fehler2}--\eqref{eq:relLinftyfehler2} for the heat equation in \eqref{eq:example_constant_coeffs_kolmogorovPDE}.}
		\label{tab:constant_coeffs_linfty_l1_l2_2}
	\end{center}
\end{table}
\begin{lemma}\label{lemma:example_constant_coeffs_exact_solution}
 Let 
  $T\in (0,\infty)$, 
  $d\in \N$,
 let 
  $C\in \R^{d\times d}$ be a 
  strictly positive and symmetric matrix, 
 and let 
  $u\colon [0,T]\times\R^d \to \R$ 
  be the function which satisfies for every $(t,x)\in [0,T]\times\R^d$ 
  that 
  \begin{equation}\begin{split}
     u(t,x) = \|x\|_{\R^d}^2 + t\operatorname{Trace}_{\R^d}(C).
     \end{split}
  \end{equation}
 Then 
 \begin{enumerate}[(i)]
  \item\label{it:exact_solution_polynomially} it holds that $u\in C^{\infty}([0,T]\times\R^d,\R)$ is 
  at most polynomially growing
  and 
  \item \label{it:exact_solution_derivative} it holds for every $t\in [0,T]$, $x\in\R^d$ that 
  \begin{equation}
   (\tfrac{\partial u}{\partial t})(t,x) 
   = 
   \tfrac12 \operatorname{Trace}_{\R^d}\!\big(C(\operatorname{Hess}_x u)(t,x)\big). 
  \end{equation}
 \end{enumerate}
\end{lemma}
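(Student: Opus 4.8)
The plan is to observe that $u$ is nothing more than a polynomial in the coordinates $(t,x_1,\ldots,x_d)$ and then to verify the asserted parabolic identity by a direct differentiation, so the whole argument reduces to an elementary computation.

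First I would establish item~\eqref{it:exact_solution_polynomially}. For this I would note that the defining representation can be rewritten as $u(t,x) = \sum_{i=1}^d (x_i)^2 + t\operatorname{Trace}_{\R^d}(C)$, which exhibits $u$ as a polynomial of degree two in the variables $(t,x)\in[0,T]\times\R^d$. Since every polynomial function is infinitely often differentiable and at most polynomially growing, this immediately yields that $u\in C^\infty([0,T]\times\R^d,\R)$ is at most polynomially growing, and hence establishes item~\eqref{it:exact_solution_polynomially}.

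Next I would prove item~\eqref{it:exact_solution_derivative} by computing the two sides of the claimed equation separately. On the one hand, differentiating $u$ in the time variable gives for every $(t,x)\in[0,T]\times\R^d$ that $(\tfrac{\partial u}{\partial t})(t,x) = \operatorname{Trace}_{\R^d}(C)$, since the summand $\|x\|_{\R^d}^2$ does not depend on $t$ and the remaining summand is linear in $t$. On the other hand, differentiating twice in the space variable gives $(\tfrac{\partial u}{\partial x_i})(t,x) = 2x_i$ and $(\tfrac{\partial^2 u}{\partial x_i\partial x_j})(t,x) = 2\delta_{ij}$ for all $i,j\in\{1,2,\ldots,d\}$, so that $(\operatorname{Hess}_x u)(t,x) = 2\operatorname{Id}_{\R^d}$. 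Combining these computations with the linearity of the trace then shows that $\tfrac12\operatorname{Trace}_{\R^d}(C(\operatorname{Hess}_x u)(t,x)) = \tfrac12\operatorname{Trace}_{\R^d}(2C) = \operatorname{Trace}_{\R^d}(C) = (\tfrac{\partial u}{\partial t})(t,x)$, which is precisely the desired identity.

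I expect no genuine obstacle here: the result is an elementary verification, and the only points that require a modicum of care are the identity $(\operatorname{Hess}_x\|\cdot\|_{\R^d}^2)(x) = 2\operatorname{Id}_{\R^d}$ and the trace evaluation $\operatorname{Trace}_{\R^d}(2C) = 2\operatorname{Trace}_{\R^d}(C)$. It is worth remarking that neither the strict positivity nor the symmetry of $C$ is actually used in the computation; these hypotheses are presumably retained only so that $C$ may be identified with a diffusion coefficient of the form $\sigma[\sigma]^{*}$ in the intended application to the heat equation, where the \emph{Feynman--Kac} representation is invoked.
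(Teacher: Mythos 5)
Your proof is correct and follows essentially the same route as the paper: observe that $u$ is a polynomial to get item~(i), then compute $(\tfrac{\partial u}{\partial t})(t,x)=\operatorname{Trace}_{\R^d}(C)$ and $(\operatorname{Hess}_x u)(t,x)=2\operatorname{Id}_{\R^d}$ and conclude via $\tfrac12\operatorname{Trace}_{\R^d}(2C)=\operatorname{Trace}_{\R^d}(C)$. Your closing remark that the symmetry and strict positivity of $C$ are never used is accurate and consistent with the paper's computation, which likewise makes no use of these hypotheses.
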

\begin{proof}[Proof of Lemma~\ref{lemma:example_constant_coeffs_exact_solution}]
 First, note that $u$ is a polynomial. This establishes item~\eqref{it:exact_solution_polynomially}. 
 Moreover, note that for every $(t,x)\in [0,T]\times\R^d$ it holds that
 \begin{equation}
  (\tfrac{\partial u}{\partial t})(t,x) = \operatorname{Trace}_{\R^d}(C), 
  \qquad
  (\nabla_x u)(t,x) = 2x,
  \end{equation}
  \begin{equation}
  \text{and}\qquad 
  (\operatorname{Hess}_x u)(t,x) = \big(\tfrac{\partial }{\partial x}(\nabla_x u)\big)(t,x) = 2\operatorname{Id}_{\R^d}. 
 \end{equation}
 Hence, we obtain for every $(t,x)\in [0,T]\times\R^d$ that 
 \begin{equation}
  (\tfrac{\partial u}{\partial t})(t,x) 
  - 
  \tfrac12 \operatorname{Trace}_{\R^d}\!\big(C(\operatorname{Hess}_x u)(t,x)\big)
  = 
  \operatorname{Trace}_{\R^d}(C) 
  - 
  \tfrac12 \operatorname{Trace}_{\R^d}(2C)
  = 
  0.
 \end{equation}
 This proves item \eqref{it:exact_solution_derivative}. The proof of Lemma~\ref{lemma:example_constant_coeffs_exact_solution} 
 is thus completed.
\end{proof}
Lemma~\ref{lemma:MonteCarloSupEstimate} below discloses the strategy how we approximatively calculate the $L^\infty(\lambda_{[0,1]^d};\R)$-errors in \eqref{eq:relLinftyfehler} and \eqref{eq:relLinftyfehler2} above. Our proof of Lemma~\ref{lemma:MonteCarloSupEstimate} employs the elementary auxiliary results in Lemma~\ref{lem:capone} and Lemma~\ref{lem:almost_sure_convergence} below. For completeness we also include proofs for Lemma~\ref{lem:capone} and Lemma~\ref{lem:almost_sure_convergence} here.
\begin{lemma}
	\label{lem:capone}
	Let $ (\Omega, \F, \P) $ be a probability space and let $A, \mathcal{O}\in\F$ satisfy that $\P(\mathcal{O})=1$. Then it holds that
	\begin{equation}
		\P(A)=\P(A\cap \mathcal{O}).
	\end{equation}
\end{lemma}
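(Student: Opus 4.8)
The plan is to exploit the elementary fact that, since $\P(\mathcal{O})=1$, the complement $\Omega\setminus\mathcal{O}$ is a $\P$-null set, and to combine this with the finite additivity and monotonicity of the probability measure $\P$. First I would decompose the event $A$ into the disjoint union $A=(A\cap\mathcal{O})\cup(A\cap(\Omega\setminus\mathcal{O}))$. Both of these pieces belong to $\F$ because $\F$ is a sigma-algebra and is therefore closed under taking complements and finite intersections, so finite additivity of $\P$ yields the identity $\P(A)=\P(A\cap\mathcal{O})+\P(A\cap(\Omega\setminus\mathcal{O}))$.

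Next I would control the second summand. Since $A\cap(\Omega\setminus\mathcal{O})\subseteq\Omega\setminus\mathcal{O}$ and $\P(\Omega\setminus\mathcal{O})=1-\P(\mathcal{O})=1-1=0$, monotonicity of $\P$ forces $\P(A\cap(\Omega\setminus\mathcal{O}))=0$. Substituting this back into the additivity identity immediately gives $\P(A)=\P(A\cap\mathcal{O})$, which is precisely the claimed equality.

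There is essentially no genuine obstacle here: the statement is a direct consequence of the defining axioms of a probability measure. The only point requiring even minimal attention is verifying that every set occurring in the argument is $\F$-measurable, and this is automatic since $A,\mathcal{O}\in\F$ and $\F$ is stable under complements and finite intersections. I would expect the entire proof to occupy just a few lines once the null-set observation $\P(\Omega\setminus\mathcal{O})=0$ is recorded.
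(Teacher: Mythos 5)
Your proof is correct and follows essentially the same route as the paper: both decompose $A$ into the disjoint union of $A\cap\mathcal{O}$ and $A\setminus\mathcal{O}$, apply finite additivity, and use monotonicity together with $\P(\Omega\setminus\mathcal{O})=1-\P(\mathcal{O})=0$ to kill the second summand. The only cosmetic difference is that the paper packages the conclusion as a two-sided inequality $\P(A\cap\mathcal{O})\le\P(A)\le\P(A\cap\mathcal{O})$, whereas you substitute the null term directly; the mathematical content is identical.
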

\begin{proof}[Proof of  Lemma~\ref{lem:capone}]\
	Observe that the monotonicity of $\P$ ensures that
	\begin{equation}
		\begin{split}
			\P(A\cap\mathcal{O}) \le \P(A) &= \P\big([A\cap \mathcal{O}] \cup [A\backslash(A\cap \mathcal{O})]\big)\\
			&=\P(A\cap \mathcal{O})+\P(A\backslash(A\cap\mathcal{O}))\\
			&=\P(A\cap \mathcal{O})+\P(A\backslash \mathcal{O})\\
			&\le \P(A\cap \mathcal{O})+\P(\Omega\backslash \mathcal{O})\\
			&=\P(A\cap \mathcal{O})+\P(\Omega)-\P(\mathcal{O})=\P(A\cap \mathcal{O}).
		\end{split}
	\end{equation}
	Hence, we obtain that for every $A,\mathcal{O}\in\F$ with $\P(\mathcal{O})=1$ it holds that $\P(A)=\P(A\cap \mathcal{O})$. The Proof of Lemma~\ref{lem:capone} is thus completed.
\end{proof}

\begin{lemma}
	\label{lem:almost_sure_convergence}
	Let $ (\Omega, \F, \P) $ be a probability space, 
	let $ X_n \colon \Omega \to [0,\infty) $, $ n \in \N_0 $, 
	be random variables, assume for every $n\in\N$ that $\P(X_n\ge X_{n+1})=1$, and assume for every $\varepsilon\in(0,\infty)$ that
	\begin{equation}
	\label{eq:schwachgg0}
	\limsup_{n\to\infty}\P(X_n>\varepsilon)=0.
	\end{equation}
	Then 
	\begin{equation}
	\label{eq:fastsichergg0}
	\P\bigg(\limsup_{n\to\infty} X_n = 0\bigg)=1.
	\end{equation}
\end{lemma}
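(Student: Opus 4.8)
The plan is to exploit the almost-sure monotonicity to show that $\limsup_{n\to\infty}X_n$ coincides with $\inf_{n\in\N_0}X_n$ on a full-measure event, and then to convert the hypothesis of convergence in probability \eqref{eq:schwachgg0} into a statement about this limit. First I would introduce the ``good'' event
\begin{equation}
	\mathcal{O} = \bigcap_{n\in\N}\{\omega\in\Omega\colon X_n(\omega)\ge X_{n+1}(\omega)\}.
\end{equation}
Since each of the events in this intersection has probability $1$ by assumption and the intersection is countable, the continuity of $\P$ (equivalently, countable subadditivity applied to the complements) yields $\P(\mathcal{O})=1$. On $\mathcal{O}$ the sequence $(X_n(\omega))_{n\in\N_0}$ is non-increasing and bounded below by $0$, so $\sup_{m\ge n}X_m=X_n$ for every $n$ and hence $\limsup_{m\to\infty}X_m=\inf_{n\in\N_0}X_n$. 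In particular, for every $n\in\N_0$ it holds on $\mathcal{O}$ that $X_n\ge\limsup_{m\to\infty}X_m$.

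Next I would fix $\varepsilon\in(0,\infty)$ and combine the previous observation with the hypothesis. The key inclusion is that for every $n\in\N_0$ it holds that
\begin{equation}
	\mathcal{O}\cap\Big\{\limsup_{m\to\infty}X_m>\varepsilon\Big\}\subseteq\{X_n>\varepsilon\},
\end{equation}
because on the left-hand event one has $X_n\ge\limsup_{m\to\infty}X_m>\varepsilon$. Note that $\limsup_{m\to\infty}X_m$ is $\F$/$\B([0,\infty))$-measurable as a $\limsup$ of measurable functions, so all events above lie in $\F$. Taking probabilities in the inclusion and letting $n\to\infty$ gives
\begin{equation}
	\P\Big(\mathcal{O}\cap\big\{\limsup_{m\to\infty}X_m>\varepsilon\big\}\Big)\le\limsup_{n\to\infty}\P(X_n>\varepsilon)=0,
\end{equation}
where the final equality is exactly \eqref{eq:schwachgg0}. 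Applying Lemma~\ref{lem:capone} (with $A=\{\limsup_{m\to\infty}X_m>\varepsilon\}$ and $\mathcal{O}$ as above) then removes the conditioning on $\mathcal{O}$ and yields $\P(\limsup_{m\to\infty}X_m>\varepsilon)=0$.

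Finally I would let $\varepsilon$ run through the sequence $\tfrac1k$, $k\in\N$, and use that $\{\limsup_{m\to\infty}X_m>0\}=\cup_{k\in\N}\{\limsup_{m\to\infty}X_m>\tfrac1k\}$ together with countable subadditivity to conclude that $\P(\limsup_{m\to\infty}X_m>0)=0$. Since $X_m\ge 0$ forces $\limsup_{m\to\infty}X_m\ge 0$, this establishes \eqref{eq:fastsichergg0}. The only genuinely delicate point is the bookkeeping around the almost-sure (rather than everywhere) monotonicity, which is precisely what the auxiliary Lemma~\ref{lem:capone} is designed to handle; once the good event $\mathcal{O}$ is isolated, the remaining steps are routine measure-theoretic manipulations.
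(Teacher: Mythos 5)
Your proposal is correct and takes essentially the same route as the paper: both isolate the full-measure monotonicity event $\mathcal{O}$, invoke Lemma~\ref{lem:capone} to trade an event for its intersection with $\mathcal{O}$, exploit monotonicity to bound tail exceedances by single-index probabilities $\P(X_n>\varepsilon)$, and finish with a union over $\varepsilon=\nicefrac{1}{k}$ and countable subadditivity. The differences are cosmetic: you obtain $\P(\mathcal{O})=1$ directly from subadditivity of the complements (the paper instead iterates Lemma~\ref{lem:capone} and uses continuity from above) and you phrase the key step as the pointwise bound $X_n\ge\limsup_{m\to\infty}X_m$ on $\mathcal{O}$ rather than via the paper's set identity; note only that, since the monotonicity hypothesis starts at $n=1$, this bound should be asserted for $n\in\N$ rather than $n\in\N_0$ — which is harmless, as you only use it in the limit $n\to\infty$.
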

\begin{proof}[Proof of Lemma~\ref{lem:almost_sure_convergence}]
	Throughout this proof let $\mathcal{O}\subseteq \Omega$ be the set given by
	\begin{equation}
		\mathcal{O}=\cap_{n=1}^\infty \{X_n\ge X_{n+1}\}=\{\forall\, n\in\N\colon X_n\ge X_{n+1}\}.
	\end{equation}	
	Observe that Lemma~\ref{lem:capone} and the hypothesis that for every $n\in\N$ it holds that $\P(X_n \ge X_{n+1})=1$ assure that for every $N\in\N$ it holds that
	\begin{equation}		\begin{split}
		\P(\cap_{n=1}^N\{X_n \ge X_{n+1}\})&=\P([\cap_{n=1}^{N-1}\{X_n\ge X_{n+1}\}]\cap \{X_N \ge X_{N+1}\})\\
		&=\P(\cap_{n=1}^{N-1}\{X_n \ge X_{n+1}\}).		\end{split}
	\end{equation}
	This implies that for every $N\in\N$ it holds that
	\begin{equation}\begin{split}
		\P(\cap_{n=1}^N\{X_n \ge X_{n+1}\}) &= \P(\cap_{n=1}^0\{X_n \ge X_{n+1}\})\\
		&=\P(\Omega)=1.\end{split}
	\end{equation}
	The fact that the measure $\P$ is continuous from above hence demonstrates that
	\begin{equation}
	\label{eq:PO}
		\begin{split}
		\P(\mathcal{O})&=\P(\cap_{n=1}^\infty\{X_n \ge X_{n+1}\})=\P(\cap_{N=1}^\infty[\cap_{n=1}^N\{X_n \ge X_{n+1}\}])\\
		&=\lim_{N\to\infty}\P(\cap_{n=1}^N\{X_n \ge X_{n+1}\})=1.
		\end{split}
	\end{equation}
	Next note that
	\begin{equation}
		\begin{split}
		\label{eq:darstellungVonGegenWkeitneu}
		&\P\bigg( \limsup_{n\to\infty} X_n > 0 \bigg) 
		\\
		&=  \P\bigg(\exists\, k \in \N\colon\bigg[\limsup_{n\to\infty}X_n >\tfrac{1}{k}\bigg]\bigg)\\
		&=  \P\Big(\exists\, k \in \N\colon\forall\, m\in\N\colon\exists\, n\in\N\cap[m,\infty)\colon\big[X_n>\tfrac{1}{k}\big]\Big)\\
		&=  \P\Big(\cup_{k\in\N}\cap_{m\in \N}\cup_{n\in\N\cap[M,\infty)}\big\{X_n>\tfrac{1}{k}\big\}\Big)\\
		&\le \sum_{k=1}^\infty \P\Big(\cap_{m=1}^\infty \cup_{n=m}^\infty\big\{X_n>\tfrac{1}{k}\big\}\Big)\\
		&=\sum_{k=1}^\infty\left[\limsup_{m\to\infty}\P(\cup_{n=m}^\infty\{X_n>\tfrac1k\})\right].
		\end{split}
	\end{equation}
	Lemma \ref{lem:capone} and \eqref{eq:PO} therefore ensure that
	\begin{equation}
		\begin{split}
		&\P\bigg( \limsup_{n\to\infty} X_n > 0 \bigg) \\
		&\le \sum_{k=1}^\infty\bigg[ \limsup_{m\to\infty}\P\Big([\cup_{n=m}^\infty\{X_n>\tfrac1k\}]\cap\mathcal{O}\Big)\bigg]\\
		&\le \sum_{k=1}^\infty\bigg[ \limsup_{m\to\infty}\P\Big([\cup_{n=m}^\infty\{X_n>\tfrac1k\}]\cap\{\forall\, n\in\N\cap[m,\infty)\colon X_m\ge X_n\}\Big)\bigg]\\
		&=\sum_{k=1}^\infty\bigg[ \limsup_{m\to\infty}\P\Big(\{X_m>\tfrac1k\}\cap\{\forall\, n\in\N\cap[m,\infty)\colon X_m\ge X_n\}\Big)\bigg]\\
		&\le \sum_{k=1}^\infty\left[\limsup_{m\to\infty}\P\big(X_m>\tfrac1k\big)\right].
		\end{split}
	\end{equation}
	Combining this and \eqref{eq:schwachgg0} establishes \eqref{eq:fastsichergg0}. The proof of Lemma~\ref{lem:almost_sure_convergence} is thus completed.
\end{proof}

\begin{lemma}\label{lemma:MonteCarloSupEstimate}
 Let 
  $d\in\N$, $a\in\R$, $b\in(a,\infty)$, 
 let 
  $f\colon [a,b]^d\to \R$ 
  be a continuous function, 
 let 
  $(\Omega,\F,\P)$ be a probability space, 
 let 
  $X_n\colon \Omega\to [a,b]^d$, $n\in\N$,  
  be i.i.d.\ random variables, 
 and assume that 
  $X_1$ is continuous uniformly distributed on $[a,b]^d$. 
 Then 
 \begin{enumerate}[(i)]
  \item \label{it:weakapprox} it holds that
  \begin{equation}\label{eq:lemma_as_convergence}
   \P\!\left(
     \limsup_{N\to\infty} 
     \left|
       \left[ 
         \max\limits_{1\leq n\leq N} f(X_n)
       \right] 
       - 
       \left[  
         \sup\limits_{x\in [a,b]^d} f(x)
       \right]
     \right| 
     = 
     0
   \right) 
   = 
   1
  \end{equation}
  and 
  \item \label{it:pthmomentapprox}
  it holds for every $p\in (0,\infty)$ that 
  \begin{equation}\label{eq:lemma_lp_convergence}
   \limsup_{N\to\infty} 
   \E\!\left[\rule{0cm}{0.9cm}
     \left|
       \left[ 
         \max\limits_{1\leq n\leq N} f(X_n)
       \right]
       - 
       \left[ 
         \sup\limits_{x\in [a,b]^d} f(x)
       \right] 
     \right|^p 
   \right] = 0. 
  \end{equation}
 \end{enumerate}
\end{lemma}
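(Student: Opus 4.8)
The plan is to reduce both assertions to the monotone convergence criterion already established in Lemma~\ref{lem:almost_sure_convergence}. First I would exploit the compactness of $[a,b]^d$: since $f$ is continuous, it attains its supremum, so that $M := \sup_{x\in[a,b]^d} f(x) = \max_{x\in[a,b]^d} f(x)$ is a finite real number and $f$ is bounded, say $m := \inf_{x\in[a,b]^d} f(x) > -\infty$. For every $N\in\N$ I would set $Z_N := M - \max_{1\le n\le N} f(X_n)$. Because each $f(X_n)\le M$ and the maximum over $\{1,2,\dots,N\}$ is nondecreasing in $N$, these random variables satisfy $0\le Z_{N+1}\le Z_N\le M-m$ surely; in particular $|\max_{1\le n\le N} f(X_n) - \sup_{x\in[a,b]^d} f(x)| = Z_N$, so that \eqref{eq:lemma_as_convergence} and \eqref{eq:lemma_lp_convergence} become statements about the sequence $(Z_N)_{N\in\N}$.

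The key step is to show that $Z_N\to 0$ in probability, i.e.\ that for every $\varepsilon\in(0,\infty)$ it holds that $\limsup_{N\to\infty}\P(Z_N>\varepsilon)=0$. Fix $\varepsilon\in(0,\infty)$ and consider the set $A_\varepsilon := \{x\in[a,b]^d\colon f(x)>M-\varepsilon\}$. Continuity of $f$ renders $A_\varepsilon$ relatively open in $[a,b]^d$, and $A_\varepsilon$ is non-empty since it contains a maximizer of $f$; hence, as $X_1$ is uniformly distributed on $[a,b]^d$, the number $p_\varepsilon := \P(f(X_1)>M-\varepsilon) = \P(X_1\in A_\varepsilon)$ is strictly positive. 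Since the event $\{Z_N>\varepsilon\}$ forces $f(X_n)\le M-\varepsilon$, and thus $X_n\notin A_\varepsilon$, for all $n\in\{1,2,\dots,N\}$, the independence of $X_1,X_2,\dots,X_N$ yields $\P(Z_N>\varepsilon)\le (1-p_\varepsilon)^N$, which tends to $0$ as $N\to\infty$. This convergence-in-probability step, resting on the positivity of $p_\varepsilon$, is the only place where continuity of $f$ and the uniform distribution of $X_1$ are genuinely used, and it is the part I expect to require the most care.

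Given this, item~\eqref{it:weakapprox} follows by applying Lemma~\ref{lem:almost_sure_convergence} to $(Z_N)_{N\in\N}$: the surely-valid monotonicity $\P(Z_N\ge Z_{N+1})=1$ together with the just-established identity $\limsup_{N\to\infty}\P(Z_N>\varepsilon)=0$ give $\P(\limsup_{N\to\infty} Z_N=0)=1$, which is exactly \eqref{eq:lemma_as_convergence}. For item~\eqref{it:pthmomentapprox} I would combine this almost sure convergence with the uniform bound $0\le Z_N^p\le (M-m)^p$: because the constant $(M-m)^p$ is integrable on the probability space $(\Omega,\F,\P)$, Lebesgue's dominated convergence theorem upgrades $Z_N^p\to 0$ $\P$-a.s.\ into $\lim_{N\to\infty}\E[Z_N^p]=0$ for every $p\in(0,\infty)$, establishing \eqref{eq:lemma_lp_convergence}. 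No serious obstacle remains beyond the positivity argument for $p_\varepsilon$; the monotonicity and boundedness that feed Lemma~\ref{lem:almost_sure_convergence} and the dominated convergence step are immediate from the construction of $Z_N$.
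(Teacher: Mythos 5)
Your proposal is correct and takes essentially the same route as the paper's proof: both reduce the problem to convergence in probability of the monotone error sequence, then invoke Lemma~\ref{lem:almost_sure_convergence} for the almost sure convergence in item~\eqref{it:weakapprox} and Lebesgue's dominated convergence theorem (using that $f$ is bounded on the compact cube) for item~\eqref{it:pthmomentapprox}. The only cosmetic difference is that you obtain the positive hitting probability directly from the relatively open, non-empty superlevel set $A_\varepsilon$, whereas the paper pulls back via continuity to a $\delta$-ball around a maximizer $\xi$ and uses that this ball intersected with $[a,b]^d$ has positive Lebesgue measure; both rest on exactly the same fact about the uniform distribution.
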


\begin{proof}[Proof of Lemma~\ref{lemma:MonteCarloSupEstimate}]
First, observe that the fact that $f\colon [a,b]^d\to\R$ is a continuous 
 function and the fact that $[a,b]^d\subseteq\R^d$ is a compact set 
 demonstrate that there exists $\xi\in [a,b]^d$ 
 which satisfies that $f(\xi) = \sup_{x\in [a,b]^d} f(x)$. Next 
 note that the fact that for every $N\in\N$, $n\in\{1,2,\ldots,N\}$ it holds that 
 $f(X_n) \leq \sup_{x\in [a,b]^d} f(x)$ implies that for every $N\in\N$ it holds that 
 $\max_{1\leq n\leq N} f(X_n) \leq \sup_{x\in [a,b]^d} f(x)$. Hence, 
 we obtain that for every $N\in\N$ it holds that
 \begin{equation}
 \left|
   \big[ 
     \max\nolimits_{1\leq n\leq N} f(X_n)
   \big] 
   - 
   \big[
     \sup\nolimits_{x\in [a,b]^d} f(x)
   \big]
 \right| 
 = 
 \big[ 
   \sup\nolimits_{x\in [a,b]^d} f(x) 
 \big]  
 - 
 \big[
   \max\nolimits_{1\leq n\leq N} f(X_n)
 \big] . 
 \end{equation}
Combining this with the fact that $f(\xi) = \sup_{x\in\R^d} f(x)$ 
 ensures that for every $\varepsilon\in (0,\infty)$, $N\in\N$ it holds that 
 \begin{equation} \label{eq:set_relations}
 \begin{split}
  & 
  \big\{|\!\max\nolimits_{1\leq n\leq N} f(X_n) - \sup\nolimits_{x\in [a,b]^d} f(x)|\leq \varepsilon\big\}\\
  &= \big\{\max\nolimits_{1\leq n\leq N} f(X_n) \geq \sup\nolimits_{x\in [a,b]^d} f(x) - \varepsilon\big\}
  \\
  &= 
  \mathop{\cup}_{n=1}^N \big\{f(X_n)\geq\sup\nolimits_{x\in [a,b]^d} f(x)-\varepsilon\big\} 
  = \mathop{\cup}_{n=1}^N \big\{|f(X_n) - \sup\nolimits_{x\in [a,b]^d} f(x)| \leq \varepsilon\big\} 
  \\
  &= 
  \mathop{\cup}_{n=1}^N \big\{|f(X_n) - f(\xi)|\leq\varepsilon\big\}. 
 \end{split}
 \end{equation}
 In the next step we observe that the fact that $f\colon [a,b]^d\to\R$ is continuous 
 ensures that for every $\varepsilon\in (0,\infty)$ there exists $\delta\in (0,\infty)$ such that for 
 every $x\in [a,b]^d$ with $\|x-\xi\|_{\R^d} \leq \delta$ it 
 holds that $|f(x)-f(\xi)| \leq \varepsilon$. 
 Combining this and \eqref{eq:set_relations} shows that for every $\varepsilon \in (0,\infty)$ there 
 exists $\delta \in (0,\infty)$ such that for every $N\in\N$ it holds that
 \begin{equation}\label{eq:estimatingProbabilites}
  \begin{split}
  & 
  \P\big(|\!\max\nolimits_{1\leq n\leq N} f(X_n) - \sup\nolimits_{x\in [a,b]^d} f(x)| \leq \varepsilon \big) 
  = 
  \P\big( \cup_{n=1}^N \{|f(X_n) - f(\xi)| \leq \varepsilon \}\big) 
  \\
  &\geq 
  \P\big( \cup_{n=1}^N \{\|X_n-\xi\|_{\R^d} \leq \delta\}\big) 
  = 
  1 - \P\big( \cap_{n=1}^N \{\|X_n-\xi\|_{\R^d} > \delta\}\big). 
  \end{split}
 \end{equation}
 Hence, we obtain that for every $\varepsilon\in(0,\infty)$ there exists $\delta\in(0,\infty)$ such that 
 \begin{equation}
 \label{eq:144b}
 \begin{split}
 &\liminf_{N\to \infty} \P\big(|\!\max\nolimits_{1\le n \le N}f(X_n)-\sup\nolimits_{x\in[a,b]}f(x)|\le \varepsilon\big)\\
 &\ge 1- \liminf_{N\to\infty}\P\big(\cap_{n=1}^N\{\|X_n-\xi\|_{\R^d}>\delta\}\big).	
 \end{split}
 \end{equation}
Next observe that the fact that the random variables $X_n\colon \Omega\to [a,b]^d$, $n\in\{1,2,\ldots,N\}$, 
are i.i.d.\ ensures that for every $\delta\in (0,\infty)$, $N\in\N$ it holds that 
\begin{equation}
\label{eq:independentproduct}
 \begin{split}
 \P\big( \cap_{n=1}^N \{\|X_n-\xi\|_{\R^d} > \delta\}\big) 
 &= 
 \prod_{n=1}^N \P(\|X_n-\xi\|_{\R^d} > \delta) \\
 &= 
 \big[\P(\|X_1-\xi\|_{\R^d} > \delta)\big]^N. 
\end{split}
\end{equation}
In addition, note that the fact that for every $\delta\in(0,\infty)$ it holds that the set $\{x\in[a,b]^d\colon\|x-\xi\|_{\R^d}\le\delta\}\subseteq \R^d$ has strictly positive $d$-dimensional Lebesgue measure and the fact that $X_1$ is continuous uniformly distributed on $[a,b]^d$ ensure that for every $\delta\in(0,\infty)$ it holds that
\begin{equation}
	\P\!\left(\|X_1-\xi\|_{\R^d}>\delta\right)=1-\P\!\left(\|X_1-\xi\|_{\R^d}\le\delta\right)<1.
\end{equation}
Hence, we obtain that for every $\delta\in(0,\infty)$ it holds that
\begin{equation}
\limsup_{N\to\infty}\Big(\big[\P(\|X_1-\xi\|_{\R^d}>\delta)\big]^N\Big)=0.
\end{equation}
Combining this with \eqref{eq:independentproduct} demonstrates that for every $\delta\in(0,\infty)$ it holds that
\begin{equation}
\limsup_{N\to\infty}\P\big( \cap_{n=1}^N \{\|X_n-\xi\|_{\R^d} > \delta\}\big) =0.
\end{equation}
This and \eqref{eq:144b} assure that for every $\varepsilon\in(0,\infty)$ it holds that
\begin{equation}
	\liminf_{N\to\infty}\P\big(|\!\max\nolimits_{1\leq n\leq N} f(X_n) - \sup\nolimits_{x\in [a,b]^d} f(x) | \leq \varepsilon \big) =1.
\end{equation}
Therefore, we obtain that for every $\varepsilon\in(0,\infty)$ it holds that
\begin{equation}
	\limsup_{N\to\infty}\P\big( |\!\max\nolimits_{1\leq n\leq N} f(X_n) - \sup\nolimits_{x\in [a,b]^d} f(x) | >\varepsilon\big) =0.
\end{equation}
Combining this with with Lemma~\ref{lem:almost_sure_convergence} establishes item~\eqref{it:weakapprox}. It thus remains to prove item~\eqref{it:pthmomentapprox}. For this note that the fact that $f\colon [a,b]^d\to\R$ is globally bounded, item~\eqref{it:weakapprox}, and Lebesgue's dominated convergence theorem 
ensure that for every $p\in (0,\infty)$ it holds that
\begin{equation}
 \limsup_{N\to\infty} \E\Big[ \big|\!\max\nolimits_{1\leq i\leq N} f(X_i) - \sup\nolimits_{x\in [a,b]^d} f(x) \big|^p\Big] = 0. 
\end{equation} This establishes item~\eqref{it:pthmomentapprox}. 
The proof of Lemma~\ref{lemma:MonteCarloSupEstimate} is thus completed.
\end{proof}

\subsection{Geometric Brownian motions}
\label{sec:geometric}
In this subsection we apply the proposed approximation algorithm 
to a Black-Scholes PDE with independent underlying geometric Brownian 
motions. 

Assume 
  Framework \ref{algo:general_algorithm}, let $r=\tfrac{1}{20}$, $\mu=r-\tfrac{1}{10}=-\tfrac{1}{20}$, $\sigma_1=\tfrac{1}{10}+\tfrac{1}{200}$, $\sigma_2=\tfrac{1}{10}+\tfrac{2}{200}$,\ldots, $\sigma_{100}=\tfrac{1}{10}+\tfrac{100}{200}$, 
assume for every $s,t \in[0,T]$, $x=(x_1,x_2,\ldots,x_d)$, $w=(w_1,w_2,\ldots,w_d)\in\R^d$, $m\in\N_0$ that 
  $ N = 1$, 
  $ d = 100$,
  $ \varphi(x) = \exp(-r T)\max\!\big\{[\max_{i\in\{1,2,\ldots,d\}} x_i]-100,0\big\} $, and
  \begin{equation}
  \begin{split}
    	&H(s,t,x,w) = \\
    	&\Big( x_1 \exp\!\big(\big(\mu_1-\tfrac{|\sigma_1|^2}{2}\big)(t-s) + \sigma_1w_1\big), \ldots, x_d \exp\!\big(\big(\mu_d-\tfrac{|\sigma_d|^2}{2}\big)(t-s) + \sigma_dw_d\big)\Big),
  \end{split}
    \end{equation}assume that 
  $ \xi^{0,1}\colon\Omega\to\R^d$ is continuous uniformly distributed on $[90,110]^d$,
and 
  let $u = (u(t,x))_{(t,x)\in [0,T]\times\R^d} \in C^{1,2}([0,T]\times\R^d,\R)$ 
  be an at most polynomially growing function which satisfies 
  for every $t\in [0,T]$, $x\in\R^d$ that 
  $u(0,x) = \varphi(x)$ and 
\begin{equation}\label{eq:example_geometric_brownian_motion_kolmogorovPDE}
 (\tfrac{\partial u}{\partial t})(t,x)  
 = 
 \tfrac12 \sum_{i=1}^d 
 |\sigma_i x_i|^2 (\tfrac{\partial^2 u}{\partial x_i^2})(t,x)
 + 
 \sum_{i=1}^d \mu_i x_i(\tfrac{\partial u}{\partial x_i})(t,x). 
\end{equation}
The Feynman-Kac formula 
(cf., for example, Hairer et al.~\cite[Corollary 4.17]{HairerHutzenthalerJentzen_LossOfRegularity2015})
shows that for every standard Brownian motion $\mathcal{W} =(\mathcal{W}^{(1)},\ldots,\mathcal{W}^{(d)})\colon[0,T]\times \Omega\to\R^d$ and every $t\in [0,T]$, $x=(x_1,\ldots,x_d)\in\R^d$ it holds that 
\begin{equation}\label{eq:example_geometric_brownian_motion_stochastic_representation}
\begin{split}
 &u(t,x) 
 = \\
 & \E\!\left[
  \varphi\!\left(x_1 \exp\!\left(\sigma_1 \mathcal{W}^{(1)}_t 
  + 
  \left(\mu_1 - \tfrac{|\sigma_1|^2}{2}\right)t\right), \ldots, x_d \exp\!\left(\sigma_d \mathcal{W}^{(d)}_t 
  + 
  \left(\mu_d - \tfrac{|\sigma_d|^2}{2}\right)t\right)\right)
 \right]. 
\end{split}
\end{equation}
Table~\ref{tab:geometric_linfty_l1_l2} 
approximately presents the relative $L^1(\lambda_{[0,1]^d};\R)$-approximation error associated to 
$(\bU^{\Theta_m,1,\bS_m}(x))_{x\in[0,1]^d}$ (see \eqref{eq:relL1fehler_bm} below),
the relative $L^2(\lambda_{[0,1]^d};\R)$-approximation error associated to 
$(\bU^{\Theta_m,1,\bS_m}(x))_{x\in[0,1]^d}$ (see \eqref{eq:relL2fehler_bm} below), 
and 
the relative $L^{\infty}(\lambda_{[0,1]^d};\R)$-approximation error associated to 
$(\bU^{\Theta_m,1,\bS_m}(x))_{x\in[0,1]^d}$  (see \eqref{eq:relLinftyfehler_bm} below)  against 
$ m \in \{0, 25000, \allowbreak50000,\allowbreak 100000, 150000, 250000, 500000, \allowbreak 750000\}$ (cf.\ \textsc{Python} code~\ref{code:geometric} in Subsection~\ref{sec:code_geometric} below).
In our numerical simulations for Table~\ref{tab:geometric_linfty_l1_l2}  we approximately calculated the exact 
solution of the PDE~\eqref{eq:example_geometric_brownian_motion_kolmogorovPDE} by means  of~\eqref{eq:example_geometric_brownian_motion_stochastic_representation}
and Monte Carlo approximations with 1048576 samples, we approximately calculated the relative $L^1(\lambda_{[0,1]^d};\R)$-approximation error
\begin{equation}
\label{eq:relL1fehler_bm}
\int_{[0,1]^d} \left|\frac{u(T,x) - \bU^{\Theta_m,1,\bS_m}(x)}{u(T,x)}\right|dx
\end{equation}
for $ m \in \{0, 25000, \allowbreak50000,\allowbreak 100000, 150000, 250000, 500000, \allowbreak 750000\}$ 
 by means of Monte Carlo approximations with 81920 samples, we approximately calculated the relative $L^2(\lambda_{[0,1]^d};\R)$-approximation error 
 \begin{equation}
 \label{eq:relL2fehler_bm}
 \sqrt{\int_{[0,1]^d} \left|\frac{u(T,x) - \bU^{\Theta_m,1,\bS_m}(x)}{u(T,x)}\right|^2dx}
 \end{equation} for $ m \in \{0, 25000, \allowbreak50000,\allowbreak 100000, 150000, 250000, 500000, \allowbreak 750000\}$  
 by means of Monte Carlo approximations with 81920 samples, and  we approximately calculated the relative $L^\infty(\lambda_{[0,1]^d};\R)$-approximation error
\begin{equation}
\label{eq:relLinftyfehler_bm}
\sup_{x\in [0,1]^d} \left|\frac{u(T,x) -  \bU^{\Theta_m,1,\bS_m}(x)}{u(T,x)}\right|
\end{equation} for $ m \in \{0, 25000, \allowbreak50000,\allowbreak 100000, 150000, 250000, 500000, \allowbreak 750000\}$  
by means of Monte Carlo approximations with 81920 samples (see Lemma~\ref{lemma:MonteCarloSupEstimate} above).

	\begin{table}[H]	\begin{center}
\begin{tabular}{|c|c|c|c|c|}
		\hline
		{\begin{tabular}{@{}c@{}}Number \\ of steps\end{tabular}}  & {\begin{tabular}{@{}c@{}}Relative \\ $L^1(\lambda_{[0,1]^d};\R)$-error\end{tabular}} &  {\begin{tabular}{@{}c@{}}Relative \\ $L^2(\lambda_{[0,1]^d};\R)$-error\end{tabular}} & {\begin{tabular}{@{}c@{}}Relative \\ $L^\infty(\lambda_{[0,1]^d};\R)$-error\end{tabular}} & {\begin{tabular}{@{}c@{}}Runtime \\ in seconds\end{tabular}}\\
		\hline
		0 &  1.004285&1.004286&1.009524 & 1 \\
		\hline
		25000 &  0.842938&0.843021&0.87884& 110.2 \\
		\hline
		50000 &  0.684955&0.685021&0.719826 &  219.5\\
		\hline
		100000 &  0.371515&0.371551&0.387978 &  437.9\\
		\hline
		150000 &  0.064605&0.064628&0.072259 &  656.2\\
		\hline
		250000 &  0.001220&0.001538&0.010039 &  1092.6\\
		\hline
		500000 & 0.000949&0.001187&0.005105&  2183.8 \\
		\hline
		750000 & 0.000902&0.001129&0.006028 &  3275.1\\
		\hline
	\end{tabular}
	\caption{Approximative presentations of the relative approximation errors in \eqref{eq:relL1fehler_bm}--\eqref{eq:relLinftyfehler_bm} for the  Black-Scholes PDE with independent underlying geometric Brownian motions in  \eqref{eq:example_geometric_brownian_motion_kolmogorovPDE}.}
	\label{tab:geometric_linfty_l1_l2}\end{center}
	\end{table}

\subsection{Black-Scholes model with correlated noise}
\label{sec:black_scholes}
In this subsection we apply the proposed approximation algorithm 
to a Black-Scholes PDE with correlated noise. 

Assume 
  Framework~\ref{algo:general_algorithm}, 
  let $r=\tfrac{1}{20}$, $\mu=r-\tfrac{1}{10}=-\frac{1}{20}$,  $\beta_1=\tfrac{1}{10}+\tfrac{1}{200}$, $\beta_2=\tfrac{1}{10}+\tfrac{2}{200}$,\ldots, $\beta_{100}=\tfrac{1}{10}+\tfrac{100}{200}$, $Q=(Q_{i,j})_{(i,j)\in\{1,2,\ldots,100\}}$, $ \Sigma=(\Sigma_{i,j})_{(i,j)\in\{1,2,\ldots,100\}}\in\R^{100\times100}$, $\varsigma_1,\varsigma_2,\ldots,\varsigma_{100}\in\R^{100}$, assume for every $s,t \in[0,T]$, $x=(x_1,x_2,\ldots,x_d)$, $w=(w_1,w_2,\ldots,w_d)\in\R^d$, $m\in\N_0$, $i,j,k\in\{1,2,\ldots,100\}$ with $i<j$  
 that
  $ N = 1$, 
  $ d = 100$,
  $ \nu = d(2d)+(2d)^2+2d=2d(3d+1)$, $Q_{k,k}=1$, $Q_{i,j}=Q_{j,i}=\tfrac12$, $\Sigma_{i,j}=0$, $\Sigma_{k,k}> 0$, $\Sigma\Sigma^\ast=Q$ (cf., for example, Golub \& Van Loan~\cite[Theorem 4.2.5]{GolubVanLoan}), $\varsigma_k=(\Sigma_{k,1},\ldots,\Sigma_{k,100})$, $
  \varphi(x) = \exp(-\mu T)\max\!\big\{110-[\min_{i\in\{1,2,\ldots,d\}} x_i],0\big\} 
  $, and
  \begin{multline}
 	H(s,t,x,w) =\Big(x_1 \exp\!\left((\mu-\tfrac12{\|\beta_1\varsigma_1\|_{\R^d}^2})(t-s) + \langle \varsigma_1,w\rangle_{\R^d}\right),
 	\ldots,\\
 	x_d\exp\!\left((\mu-\tfrac12{\|\beta_d\varsigma_d\|_{\R^d}^2})(t-s) + \langle \varsigma_d,w\rangle_{\R^d}\right)\Big),
  \end{multline} 
  assume that 
  $ \xi^{0,1}\colon\Omega\to\R^d$ is continuous uniformly distributed on $[90,110]^d$, 
and let $u=(u(t,x))_{t\in [0,T],x\in\R^d}\in C^{1,2}([0,T]\times\R^d,\R)$ 
be an at most polynomially growing continuous function which satisfies 
for every $t\in [0,T]$, $x\in\R^d$ that $u(0,x)=\varphi(x)$ and 
\begin{equation}
\label{eq:example_correlated_geometric_brownian_motion_kolmogorovPDE}
 (\tfrac{\partial u}{\partial t})(t,x)  
 = 
 \tfrac12 \sum_{i,j=1}^d 
 x_i x_j \beta_i \beta_j \langle \varsigma_i,\varsigma_j \rangle_{\R^d}
 (\tfrac{\partial^2 u}{\partial x_i^2})(t,x)
 + 
 \sum_{i=1}^d \mu_i x_i (\tfrac{\partial u}{\partial x_i})(t,x). 
\end{equation}
The Feynman-Kac formula 
(cf., for example, Hairer et al.~\cite[Corollary 4.17]{HairerHutzenthalerJentzen_LossOfRegularity2015})
shows that for every standard Brownian motion $\mathcal{W} =(\mathcal{W}^{(1)},\ldots,\mathcal{W}^{(d)})\colon[0,T]\times \Omega\to\R^d$ and every $t\in [0,T]$, $x=(x_1,\ldots,x_d)\in\R^d$ it holds that 
\begin{multline}\label{eq:example_correlated_geometric_brownian_motion_stochastic_representation}
u(t,x) 
= \E\!\left[
\varphi\!\left(x_1 \exp\!\left(\big\langle \varsigma_1,\mathcal{W}^{(1)}_t\big\rangle_{\R^d}  
+ 
\Big(\mu_1 - \tfrac{\|\beta_1\varsigma_1\|_{\R^d}^2}{2}\Big)t\right),\ldots,\right.\right.\\
\qquad~\left. \left. x_d \exp\!\left(\big\langle \varsigma_d,\mathcal{W}^{(d)}_t\big\rangle_{\R^d}  
+ 
\Big(\mu_d - \tfrac{\|\beta_d\varsigma_d\|_{\R^d}^2}{2}\Big)t\right)\right)
\right]. 
\end{multline}
Table~\ref{tab:brownian_linfty_l1_l2} approximately presents 
 the relative $L^1(\lambda_{[0,1]^d};\R)$-approximation error associated to 
$(\bU^{\Theta_m,1,\bS_m}(x))_{x\in[0,1]^d}$ (see \eqref{eq:relL1fehler_bs} below),
the relative $L^2(\lambda_{[0,1]^d};\R)$-approximation error associated to 
$(\bU^{\Theta_m,1,\bS_m}(x))_{x\in[0,1]^d}$ (see \eqref{eq:relL2fehler_bs} below), 
and 
the relative $L^{\infty}(\lambda_{[0,1]^d};\R)$-approximation error associated to 
$(\bU^{\Theta_m,1,\bS_m}(x))_{x\in[0,1]^d}$  (see \eqref{eq:relLinftyfehler_bs} below) against 
$ m \in \{0, 25000, \allowbreak50000,\allowbreak 100000, 150000, 250000, 500000, \allowbreak 750000\}$ (cf.\ \textsc{Python} code~\ref{code:black_scholes} in Subsection~\ref{sec:black_scholes} below). In our numerical simulations for Table~\ref{tab:brownian_linfty_l1_l2} we approximately calculated the exact 
solution of the PDE~\eqref{eq:example_correlated_geometric_brownian_motion_kolmogorovPDE} by means of \eqref{eq:example_correlated_geometric_brownian_motion_stochastic_representation} and 
Monte Carlo approximations with 1048576 samples, we approximately calculated the relative $L^1(\lambda_{[0,1]^d};\R)$-approximation error
\begin{equation}
\label{eq:relL1fehler_bs}
\int_{[0,1]^d} \left|\frac{u(T,x) - \bU^{\Theta_m,1,\bS_m}(x)}{u(T,x)}\right|dx
\end{equation} for $ m \in \{0, 25000, \allowbreak50000,\allowbreak 100000, 150000, 250000, 500000, \allowbreak 750000\}$ 
 by means of Monte Carlo approximations with 81920 samples, we approximately calculated the relative $L^2(\lambda_{[0,1]^d};\R)$-approximation error
 \begin{equation}
 \label{eq:relL2fehler_bs}
 \sqrt{\int_{[0,1]^d} \left|\frac{u(T,x) - \bU^{\Theta_m,1,\bS_m}(x)}{u(T,x)}\right|^2dx}
 \end{equation}  for $ m \in \{0, 25000, \allowbreak50000,\allowbreak 100000, 150000, 250000, 500000, \allowbreak 750000\}$ by means of Monte Carlo approximations with 81920 samples, and we approximately calculated the relative $L^\infty(\lambda_{[0,1]^d};\R)$-approximation error
 \begin{equation}
 \label{eq:relLinftyfehler_bs}
 \sup_{x\in [0,1]^d} \left|\frac{u(T,x) -  \bU^{\Theta_m,1,\bS_m}(x)}{u(T,x)}\right|
 \end{equation} for $ m \in \{0, 25000, \allowbreak50000,\allowbreak 100000, 150000, 250000, 500000, \allowbreak 750000\}$ 
by means of Monte Carlo approximations with 81920 samples (see Lemma~\ref{lemma:MonteCarloSupEstimate} above). 
\begin{table}[H]	\begin{center}
				\begin{tabular}{|c|c|c|c|c|}
				\hline
				{\begin{tabular}{@{}c@{}}Number \\ of steps\end{tabular}}  & {\begin{tabular}{@{}c@{}}Relative \\ $L^1(\lambda_{[0,1]^d};\R)$-error\end{tabular}} &  {\begin{tabular}{@{}c@{}}Relative \\ $L^2(\lambda_{[0,1]^d};\R)$-error\end{tabular}} & {\begin{tabular}{@{}c@{}}Relative \\ $L^\infty(\lambda_{[0,1]^d};\R)$-error\end{tabular}} & {\begin{tabular}{@{}c@{}}Runtime \\ in seconds\end{tabular}}\\
				\hline
		0 &  1.003383&1.003385&1.011662 & 0.8 \\
		\hline
		25000 &  0.631420&0.631429&0.640633 & 112.1 \\
		\hline
		50000 &  0.269053&0.269058&0.275114&  223.3\\
		\hline
		100000 &  0.000752&0.000948&0.00553 &  445.8\\
		\hline
		150000 & 0.000694&0.00087&0.004662 &  668.2\\
		\hline
		250000 &  0.000604&0.000758&0.006483 &  1119.3\\
		\hline
		500000 & 0.000493&0.000615&0.002774 &  2292.8 \\
		\hline
		750000 & 0.000471&0.00059&0.002862 &  3466.8\\
		\hline
	\end{tabular}
	\caption{Approximative presentations of the relative approximation errors in \eqref{eq:relL1fehler_bs}--\eqref{eq:relLinftyfehler_bs} for the Black-Scholes PDE with correlated noise in  \eqref{eq:example_correlated_geometric_brownian_motion_kolmogorovPDE}.}
	\label{tab:brownian_linfty_l1_l2}\end{center}
\end{table}

\subsection{Stochastic Lorenz equations}
\label{sec:lorenz}

In this subsection we apply the proposed approximation algorithm 
to the stochastic Lorenz equation.

Assume 
Framework~\ref{algo:general_algorithm}, let $\alpha_1=10$, $\alpha_2=14$, $\alpha_3=\tfrac{8}{3}$, $\beta=\tfrac{3}{20}$, let $\mu\colon \R^d\to\R^d$ be a function,  assume for every $s,t \in[0,T]$, $x=(x_1,x_2,\ldots,x_d)$, $w=(w_1,w_2,\ldots,w_d)\in\R^d$, $m\in\N_0$  
that
$ N = 100 $, 
$ d = 3 $, 
$ \nu = (d+20)d + (d+20)^2 +(d+20)= (d+20)(2d+21)$, $\mu(x) = (\alpha_1 (x_2-x_1), \alpha_2 x_1 - x_2 - x_1 x_3, x_1 x_2 - \alpha_3 x_3)
$, $\varphi(x) = \|x\|^2_{\R^d}
$, and 
\begin{equation}
\label{eq:lorenz_verfahren}
H ( s , t , x , w ) 
= 
x + 
\mu(x)(t-s)\mathbbm{1}_{[0,N/T]}(\|\mu(x)\|_{\R^d}) + \beta w
\end{equation}
(cf., for example, Hutzenthaler et al.~\cite{HutzenthalerJentzen2015_Memoirs}, Hutzenthaler et al.~\cite{HutzenthalerJentzenKloeden_StrongConvergenceExplicit2012}, Hutzenthaler et al.~\cite{HutzenthalerJentzenWang}, Milstein \& Tretyakov~\cite{MilsteinTretyakovNonglobal}, Sabanis~\cite{Sabanis13, Sabanis16}, and the references mentioned therein for related temporal numerical approximation schemes for SDEs), assume that 
$ \xi^{0,1}\colon\Omega\to\R^d$ is continuous uniformly distributed on $[\tfrac12,\tfrac32]\times[8,10]\times[10,12]$, 
and let $u=(u(t,x))_{(t,x)\in [0,T]\times\R^d}\in C^{1,2}([0,T]\times\R^d,\R)$ be an at most polynomially growing function (cf., for example, Hairer et al.~\cite[Corollary 4.17]{HairerHutzenthalerJentzen_LossOfRegularity2015} and H\"ormander~\cite[Theorem 1.1]{hoermander1967})
for every $t\in [0,T]$, $x\in\R^d$ that 
$u(0,x) = \varphi(x)$ and 
\begin{equation}
\begin{split}
\label{eq:example_lorenz_kolmogorovPDE}
&(\tfrac{\partial u}{\partial t})(t,x)
= 
\tfrac{\beta^2}{2}(\Delta_x u)(t,x)
+ 
\alpha_1(x_2-x_1) (\tfrac{\partial u}{\partial x_1})(t,x) \\
&+ 
(\alpha_2 x_1 - x_2 - x_1 x_3)(\tfrac{\partial u}{\partial x_2})(t,x) 
+ 
(x_1 x_2 - \alpha_3 x_3)(\tfrac{\partial u}{\partial x_3})(t,x).
\end{split} 
\end{equation}
Table~\ref{tab:lorenz_linfty_l1_l2}  
approximately presents the relative $L^1(\lambda_{[0,1]^d};\R)$-approximation error associated to 
$(\bU^{\Theta_m,1,\bS_m}(x))_{x\in[0,1]^d}$ (see \eqref{eq:relL1fehler_lz} below),
the relative $L^2(\lambda_{[0,1]^d};\R)$-approximation error associated to 
$(\bU^{\Theta_m,1,\bS_m}(x))_{x\in[0,1]^d}$ (see \eqref{eq:relL2fehler_lz} below), 
and 
the relative $L^{\infty}(\lambda_{[0,1]^d};\R)$-approximation error associated to 
$(\bU^{\Theta_m,1,\bS_m}(x))_{x\in[0,1]^d}$  (see \eqref{eq:relLinftyfehler_lz} below) against 
$ m \in \{0, 25000, \allowbreak50000,\allowbreak 100000, 150000, 250000, 500000, \allowbreak 750000\}$ (cf.\ \textsc{Python} code~\ref{code:lorenz} in Subsection~\ref{sec:code_lorenz} below). In our numerical simulations for Table~\ref{tab:lorenz_linfty_l1_l2} we approximately calculated the exact 
solution of the 
PDE~\eqref{eq:example_lorenz_kolmogorovPDE} by means of 
Monte Carlo approximations  with 104857 samples and temporal SDE-discretizations based on \eqref{eq:lorenz_verfahren} and 100 equidistant time steps, we approximately calculated the relative $L^1(\lambda_{[0,1]^d};\R)$-approximation error 
\begin{equation}
\label{eq:relL1fehler_lz}
\int_{[0,1]^d} \left|\frac{u(T,x) - \bU^{\Theta_m,1,\bS_m}(x)}{u(T,x)}\right|dx
\end{equation} for $ m \in \{0, 25000, \allowbreak50000,\allowbreak 100000, 150000, 250000, 500000, \allowbreak 750000\}$ 
by means of Monte Carlo approximations with 20480 samples, we approximately calculated the relative $L^2(\lambda_{[0,1]^d};\R)$-approximation error 
\begin{equation}
\label{eq:relL2fehler_lz}
\sqrt{\int_{[0,1]^d} \left|\frac{u(T,x) - \bU^{\Theta_m,1,\bS_m}(x)}{u(T,x)}\right|^2dx}
\end{equation} for $ m \in \{0, 25000, \allowbreak50000,\allowbreak 100000, 150000, 250000, 500000, \allowbreak 750000\}$ 
by means of Monte Carlo approximations with 20480 samples, and we approximately calculated the relative $L^\infty(\lambda_{[0,1]^d};\R)$-approximation error
\begin{equation}
\label{eq:relLinftyfehler_lz}
 \sup_{x\in [0,1]^d} \left|\frac{u(T,x) -  \bU^{\Theta_m,1,\bS_m}(x)}{u(T,x)}\right|
\end{equation} for $ m \in \{0, 25000, \allowbreak50000,\allowbreak 100000, 150000, 250000, 500000, \allowbreak 750000\}$ 
by means of Monte Carlo approximations with 20480 samples (see Lemma~\ref{lemma:MonteCarloSupEstimate} above).  

\begin{table}[H]	\begin{center}
		\begin{tabular}{|c|c|c|c|c|}
				\hline
				{\begin{tabular}{@{}c@{}}Number \\ of steps\end{tabular}}  & {\begin{tabular}{@{}c@{}}Relative \\ $L^1(\lambda_{[0,1]^d};\R)$-error\end{tabular}} &  {\begin{tabular}{@{}c@{}}Relative \\ $L^2(\lambda_{[0,1]^d};\R)$-error\end{tabular}} & {\begin{tabular}{@{}c@{}}Relative \\ $L^\infty(\lambda_{[0,1]^d};\R)$-error\end{tabular}} & {\begin{tabular}{@{}c@{}}Runtime \\ in seconds\end{tabular}}\\
				\hline
			0 & 0.995732& 0.995732& 0.996454& 1.0 \\
			\hline
			25000 & 0.905267& 0.909422& 1.247772& 750.1 \\
			\hline
			50000  &0.801935& 0.805497& 1.115690& 1461.7 \\
			\hline
			100000 &0.599847& 0.602630& 0.823042& 2932.1 \\
			\hline
			150000 &0.392394& 0.394204& 0.542209& 4423.3 \\
			\hline
			250000 &0.000732& 0.000811& 0.002865& 7327.9 \\
			\hline
			500000 &0.000312& 0.000365& 0.003158& 14753.0 \\
			\hline
			750000 &0.000187& 0.000229& 0.001264& 21987.4 \\
			\hline
		\end{tabular}
		\caption{Approximative presentations of the relative approximation errors in \eqref{eq:relL1fehler_lz}--\eqref{eq:relLinftyfehler_lz} for the stochastic Lorenz equation in  \eqref{eq:example_lorenz_kolmogorovPDE}.}
		\label{tab:lorenz_linfty_l1_l2}\end{center}
\end{table}

\subsection{Heston model}
\label{sec:heston}
In this subsection we apply the proposed approximation algorithm 
to the Heston model in \eqref{eq:example_heston_kolmogorovPDE} below.

Assume 
  Framework \ref{algo:general_algorithm}, 
let 
  $ \delta =25$, $\alpha=\tfrac{1}{20}$, $\kappa=\tfrac{6}{10}$, 
  $ \theta =\tfrac{1}{25} $, 
  $ \beta=\tfrac{1}{5}$, $\varrho =-\tfrac{4}{5}$, 
  let $e_{i}\in\R^{50}$, $i\in\{1,2,\ldots,{50}\}$, be the vectors which satisfy that 
  $e_{1} = (1,0,0,\ldots,0,0)\in\R^{50}$, $e_{2} = (0,1,0,\ldots,0,0)\in\R^{50}$, 
  \ldots, 
  $e_{{50}} = (0,0,0,\ldots,0,1)\in\R^{50}$, 
assume for every $s,t \in[0,T]$, $x=(x_1,x_2,\ldots,x_d)$, $w=(w_1,w_2,\ldots,w_d)\in\R^d$ that
  $ N = 100$, 
  $ d = 2\delta = 50 $, 
  $ \nu = (d+50)d + (d+50)^2 +(d+50)= (d+50)(2d+51)$, $
  \varphi(x) = \exp(-\alpha T)\max\!\big\{110 - [\textstyle\sum_{i=1}^\delta\tfrac{x_{2i-1}}{\delta}],0\big\} 
  $, and
  \begin{multline}
  \label{eq:heston_verfahren}
  H( s , t , x , w )=  \sum_{i=1}^{\delta}\Bigg( \bigg[x_{2i-1}\exp\!\Big((\alpha-\tfrac{x_{2i}}{2})(t-s)+w_{2i-1}\sqrt{x_{2i}}\Big)\bigg]e_{2i-1}\Bigg.\\
  \left.+\bigg[\max\!\bigg\{\Big[
  \max\!\Big\{\tfrac\beta2\sqrt{t-s},\max\!\big\{\tfrac{\beta}{2}\sqrt{t-s},\sqrt{x_{2i}}\big\}+\tfrac{\beta}{2}\big(\rho w_{2i-1}+[{1-\rho^2}]^{1/2}w_{2i}\big)\Big\}\Big]^2\right.\\
  \Bigg.+\big(\kappa\theta-\tfrac{\beta^2}{4}-\kappa x_{2i}\big)(t-s),0\bigg\}\bigg]e_{2i}\Bigg)
  \end{multline}
  (cf.\ Hefter \& Herzwurm~\cite[Section 1]{HefterHerzwurm}), assume that 
  $ \xi^{0,1}\colon\Omega\to\R^d$ 
  is continuous uniformly distributed on $\times_{i=1}^\delta \big([90,110]\times[0.02,0.2]\big)$, 
and let $u=(u(t,x))_{(t,x)\in [0,T]\times\R^d}\in C^{1,2}([0,T]\times\R^d,\R)$ 
  be an at most polynomially growing function (cf., for example, Alfonsi~\cite[Proposition 4.1]{Alfonsi2005}) which satisfies 
  for every $t\in [0,T]$, $x\in\R^d$ that 
  $u(0,x) = \varphi(x)$ and 
\begin{multline}
\label{eq:example_heston_kolmogorovPDE}
(\tfrac{\partial u}{\partial t})(t,x)  = 
 \Bigg[\sum_{i=1}^{\delta} 
  \Big(
  \alpha x_{2i-1} (\tfrac{\partial u}{\partial x_{2i-1}})(t,x) 
  + 
  \kappa (\theta - x_{2i}) (\tfrac{\partial u}{\partial x_{2i}})(t,x)
  \Big)\Bigg]
 \\
+ \Bigg[
 \sum_{i=1}^{\delta} \frac{|x_{2i}|}{2}
 \Big( 
  |x_{2i-1}|^2 (\tfrac{\partial^2 u}{\partial x_{2i-1}^2})(t,x)  + 2 x_{2i-1} \beta\varrho (\tfrac{\partial^2 u}{\partial x_{2i-1}\partial x_{2i}})(t,x) + \beta^2 (\tfrac{\partial^2 u}{\partial x_{2i}^2})(t,x)
 \Big)\Bigg]. 
\end{multline}
Table~\ref{tab:heston_linfty_l1_l2}  
approximately presents the relative $L^1(\lambda_{[0,1]^d};\R)$-approximation error associated to 
$(\bU^{\Theta_m,1,\bS_m}(x))_{x\in[0,1]^d}$ (see \eqref{eq:relL1fehler_hs} below),
the relative $L^2(\lambda_{[0,1]^d};\R)$-approximation error associated to 
$(\bU^{\Theta_m,1,\bS_m}(x))_{x\in[0,1]^d}$ (see \eqref{eq:relL2fehler_hs} below), 
and 
the relative $L^{\infty}(\lambda_{[0,1]^d};\R)$-approximation error associated to 
$(\bU^{\Theta_m,1,\bS_m}(x))_{x\in[0,1]^d}$ (see \eqref{eq:relLinftyfehler_hs} below) against 
$ m \in \{0, 25000, \allowbreak50000,\allowbreak 100000, 150000, 250000, 500000, \allowbreak 750000\}$ (cf.\ \textsc{Python} code~\ref{code:heston} in Subsection~\ref{sec:code_heston} below). In our numerical simulations for Table~\ref{tab:heston_linfty_l1_l2}  we approximately calculated the exact 
solution of the PDE~\eqref{eq:example_heston_kolmogorovPDE} by means of 
Monte Carlo approximations with 1048576 samples and temporal SDE-discretizations based on \eqref{eq:heston_verfahren} and 100 equidistant time steps, we approximately calculated the relative $L^1(\lambda_{[0,1]^d};\R)$-approximation error
\begin{equation}
\label{eq:relL1fehler_hs}
\int_{[0,1]^d} \left|\frac{u(T,x) - \bU^{\Theta_m,1,\bS_m}(x)}{u(T,x)}\right|dx
\end{equation} for $ m \in \{0, 25000, \allowbreak50000,\allowbreak 100000, 150000, 250000, 500000, \allowbreak 750000\}$ 
 by means of Monte Carlo approximations with 10240 samples, we approximately calculated the relative $L^2(\lambda_{[0,1]^d};\R)$-approximation error 
 \begin{equation}
 \label{eq:relL2fehler_hs}
 \sqrt{\int_{[0,1]^d} \left|\frac{u(T,x) - \bU^{\Theta_m,1,\bS_m}(x)}{u(T,x)}\right|^2dx}
 \end{equation} for $ m \in \{0, 25000, \allowbreak50000,\allowbreak 100000, 150000, 250000, 500000, \allowbreak 750000\}$ 
 by means of Monte Carlo approximations with 10240 samples, and we approximately calculated the relative $L^\infty(\lambda_{[0,1]^d};\R)$-approximation error
 \begin{equation}
 \label{eq:relLinftyfehler_hs}
 \sup_{x\in [0,1]^d} \left|\frac{u(T,x) -  \bU^{\Theta_m,1,\bS_m}(x)}{u(T,x)}\right|
 \end{equation} for $ m \in \{0, 25000, \allowbreak50000,\allowbreak 100000, 150000, 250000, 500000, \allowbreak 750000\}$ 
by means of Monte Carlo approximations with 10240 samples (see Lemma~\ref{lemma:MonteCarloSupEstimate} above).  
  
\begin{table}[H]	\begin{center}
		\begin{tabular}{|c|c|c|c|c|}
				\hline
				{\begin{tabular}{@{}c@{}}Number \\ of steps\end{tabular}}  & {\begin{tabular}{@{}c@{}}Relative \\ $L^1(\lambda_{[0,1]^d};\R)$-error\end{tabular}} &  {\begin{tabular}{@{}c@{}}Relative \\ $L^2(\lambda_{[0,1]^d};\R)$-error\end{tabular}} & {\begin{tabular}{@{}c@{}}Relative \\ $L^\infty(\lambda_{[0,1]^d};\R)$-error\end{tabular}} & {\begin{tabular}{@{}c@{}}Runtime \\ in seconds\end{tabular}}\\
				\hline
			0 & 1.038045&1.038686&1.210235& 1.0 \\
			\hline
			25000 &  0.005691&0.007215&0.053298& 688.4 \\
			\hline
			50000 &   0.005115&0.006553&0.036513& 1375.2 \\
			\hline
			100000 &  0.004749&0.005954&0.032411& 2746.8 \\
			\hline
			150000 &  0.006465&0.008581&0.051907& 4120.2 \\
			\hline
			250000 &  0.005075&0.006378&0.024458& 6867.5 \\
			\hline
			500000 & 0.002082&0.002704&0.019604& 13763.7 \\
			\hline
			750000 & 0.00174&0.002233&0.012466& 20758.8 \\
			\hline
		\end{tabular}
		\caption{Approximative presentations of the relative approximation errors in \eqref{eq:relL1fehler_hs}--\eqref{eq:relLinftyfehler_hs} for the Heston model in  \eqref{eq:example_heston_kolmogorovPDE}.}
		\label{tab:heston_linfty_l1_l2}\end{center}
\end{table}

\section{\textsc{{\sc Python}} source codes}\label{sec:source_code}
\subsection{{\sc Python} source code for the algorithm}
In Subsections~\ref{sec:code_paraboliceq}--\ref{sec:code_heston} below we present {\sc Python} source codes associated to the numerical simulations in Subsections~\ref{sec:paraboliceq}--\ref{sec:heston} above. The following {\sc Python} source code, {\sc Python} code~\ref{code:common} below, is employed in the case of each of the {\sc Python} source codes in Subsections~\ref{sec:code_paraboliceq}--\ref{sec:code_heston} below.

\lstset{caption={\it common.py}}

\lstinputlisting[label={code:common}]{common.py}

\subsection[A {\sc Python} source code associated to Subsection~\ref{sec:paraboliceq}]{A {\sc Python} source code associated to the numerical simulations in Subsection~\ref{sec:paraboliceq}}
\label{sec:code_paraboliceq}
\lstset{caption={\it example3\_2.py}}

\lstinputlisting[label={code:paraboliceq}]{example3_1.py}

\subsection[A {\sc Python} source code associated to Subsection~\ref{sec:geometric}]{A {\sc Python} source code associated to the numerical simulations in Subsection~\ref{sec:geometric}}
\label{sec:code_geometric}
\lstset{caption={\it example3\_3.py}}

\lstinputlisting[label={code:geometric}]{example3_2.py}

\subsection[A {\sc Python} source code associated to Subsection~\ref{sec:black_scholes}]{A {\sc Python} source code associated to the numerical simulations in Subsection~\ref{sec:black_scholes}}
\label{sec:code_black_scholes}
\lstset{caption={\it example3\_4.py}}

\lstinputlisting[label={code:black_scholes}]{example3_3.py}

\subsection[A {\sc Python} source code associated to Subsection~\ref{sec:lorenz}]{A {\sc Python} source code associated to the numerical simulations in Subsection~\ref{sec:lorenz}}
\label{sec:code_lorenz}
\lstset{caption={\it example3\_5.py}}

\lstinputlisting[label={code:lorenz}]{example3_4.py}

\subsection[A {\sc Python} source code associated to Subsection~\ref{sec:heston}]{A {\sc Python} source code associated to the numerical simulations in Subsection~\ref{sec:heston}}
\label{sec:code_heston}
\lstset{caption={\it example3\_6.py}}

\lstinputlisting[label={code:heston}]{example3_5.py}

\bibliographystyle{acm}
\bibliography{bibfile}

\end{document}